\documentclass{article}
\usepackage{graphicx} 
\usepackage{geometry}
\usepackage{bm}
 \geometry{
 a4paper,
 total={170mm,257mm}, 
 left=20mm, 
 top=20mm,
 }
\binoppenalty=100
\relpenalty=100
\usepackage{cuted}
\usepackage[normalem]{ulem}

\usepackage{moreverb,url}

\usepackage[colorlinks,bookmarksopen,bookmarksnumbered,citecolor=red,urlcolor=red]{hyperref}

\usepackage{graphicx}
\graphicspath{{svg-inkscape/}}
\usepackage[per-mode=fraction]{siunitx}
\usepackage{multirow}
\usepackage{eurosym}
\usepackage{pifont}
\usepackage[utf8]{inputenc}
\usepackage{mathtools}
\mathtoolsset{showonlyrefs}
\usepackage{amsfonts}
\usepackage{amssymb}
\usepackage{paralist}
\usepackage{textcomp,gensymb}
\usepackage{subcaption}
\usepackage{float}
\usepackage[english]{babel}
\usepackage{multicol}
\setlength{\columnsep}{1cm}
\usepackage{wrapfig}
\usepackage{pdfpages}
\usepackage{xcolor}
\usepackage{nicefrac}
\usepackage{algorithm}
\usepackage{algpseudocode}
\usepackage{svg}
\usepackage{interval}
\usepackage{tabularx}
\newcolumntype{b}{X}
\newcolumntype{s}{>{\hsize=.5\hsize}X}

\definecolor{BrickRed}{rgb}{0.8, 0.25, 0.33}

\makeatletter
\def\@opargbegintheorem#1#2#3{\trivlist
   \item[]{\bfseries #1\ #2\ (#3)} \itshape}
\makeatother

\newtheorem{theorem}{Theorem}[section]
\newtheorem{definition}[theorem]{Definition}
\newtheorem{proof}{Proof}[theorem]
\newtheorem{remark}[theorem]{Remark}
\newtheorem{lemma}[theorem]{Lemma}
\newtheorem{proposition}[theorem]{Proposition}

\algtext*{EndWhile}
\algtext*{EndIf}
\algtext*{EndFor}
\algtext*{Procedure}
\algtext*{EndProcedure}

\makeatletter
\let\OldStatex\Statex
\renewcommand{\Statex}[1][3]{%
  \setlength\@tempdima{\algorithmicindent}%
  \OldStatex\hskip\dimexpr#1\@tempdima\relax}
\makeatother




\newcommand{\M}{\mathcal{M}}                                
\newcommand{\X}{\mathfrak{X}}                               
\newcommand{\g}{\mathfrak{g}}                               
\newcommand{\h}{\mathfrak{h}}                               
\newcommand{\R}{\mathbb{R}}                                 
\newcommand{\appX}{\bar{X}}
\newcommand{\state}{x}
\newcommand{\stateb}{y}
\newcommand{\appstate}{\bar{x}}

\newcommand{\ext}{\text{d}}                                 
\newcommand{\der}[2][3]{\frac{\partial{#1}}{\partial{#2}}}
\newcommand{\Coord}{Q}                                      

\newcommand{\Diff}{\mathrm{Diff}}

\newcommand{\pmat}[1]{\begin{pmatrix}#1\end{pmatrix}}


\title{Model order reduction via Lie groups}
\author{Yannik Wotte, Patrick Buchfink, Silke Glas, \\ Federico Califano, Stefano Stramigioli}
\date{\today}

\begin{document}

\maketitle

 
    
\begin{abstract}
    Lie groups and their actions are ubiquitous in the description of physical systems, and we explore implications in the setting of model order reduction (MOR).
    We present a novel framework of MOR via Lie groups, called MORLie, in which high-dimensional dynamical systems on manifolds are approximated by low-dimensional dynamical systems on Lie groups.  
    In comparison to other Lie group methods we are able to attack non-equivariant dynamics, which are frequent in practical applications, and we provide new non-intrusive MOR methods based on the presented geometric formulation.
    We also highlight numerically that MORLie has a lower error bound than the Kolmogorov $N$-width, which limits linear-subspace methods.  
    The method is applied to various examples: 1. MOR of a simplified deforming body modeled by noisy point cloud data following a sheering motion, where MORLie outperforms a naive POD approach in terms of accuracy and dimensionality reduction. 2. Reconstructing liver motion during respiration with data from edge detection in MRI scans, where MORLie reaches performance approaching the state of the art, while reducing the training time from hours on a computing cluster to minutes on a mobile workstation. 3. An analytic example showing that the method of freezing (a previous Lie group method for MOR introduced in~\cite{Ohlberger2013}) is analytically recovered as a special case, showing the generality of the geometric framework.    
\end{abstract}

\begin{multicols}{2}
    
\section{Introduction}\label{sec:introduction}

\begin{figure*}
    \centering
    \def\svgwidth{0.8\linewidth}\scriptsize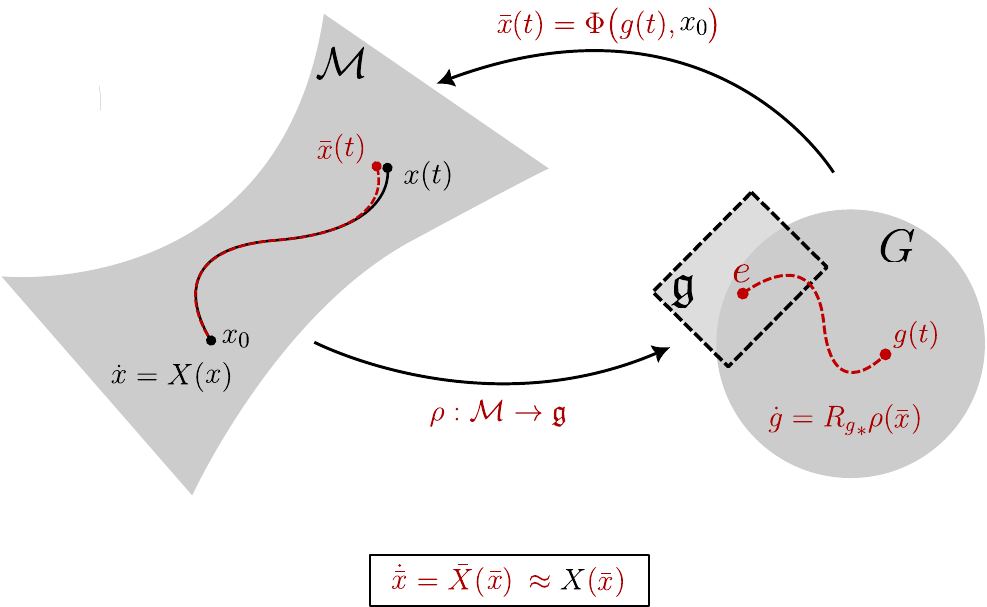
    \caption{Summary of MORLie. 
    Given a full order model (FOM) defined by a vector field $X$ on a state-manifold $\M$, we construct a reduced order model (ROM) on a lower-dimensional Lie group $G$. A group action $\Phi:G\times\M\rightarrow\M$ is used to reconstruct states via $\appstate(t) = \Phi(g(t),\state_0)$, where the evolution of $g(t)$ is determined by a reduced vector field $\rho:\M\rightarrow\mathfrak{g}$. The ROM dynamics on $G$ are defined as $\dot{g} = {R_g}_*\rho(\appstate)$, and induce approximated dynamics on $\M$ that are constrained to group orbits. The goal is to choose $(G,\Phi,\rho)$, such that the reconstructed solutions $\appstate(t)$ closely follows the FOM dynamics $\dot{\appstate} \approx X(\appstate)$.
    }\label{fig:MorLie_Overview}
\end{figure*}

Model order reduction (MOR) is an essential step for the repeated simulation and optimization of distributed and multi-scale engineering systems, such as soft robotic manipulators~\cite{Sadati2023,Mathew2025,Pustina2024}, 
hybrid reaction-diffusion systems~\cite{Chinesta2011,Grepl2012}, fluids~\cite{Lassila2014} and systems with fluid-structure interactions~\cite{Lee2024}. These systems are frequently modeled by (parameterized) partial differential equations (PDEs), which yield high dimensional ordinary differential equations (ODEs) upon spatial discretization~--~often considered as full order models (FOMs). The core idea of MOR is to approximate these FOMs by low-dimensional surrogate models, called reduced order models (ROMs), that can be evaluated with far lower computational complexity. 


A big class of MOR techniques begins with large datasets of snapshots, which could be solutions from high-fidelity simulations of FOMs or state measurements from real world experiments, for an interesting range of system parameters and initial conditions. In the following, we denote the high-dimensional state-manifold of the FOM as $\M$, a set of solution snapshots (classically, the solution manifold) as $S \subseteq \M$, and we lump parameters and initial conditions as $\mu \in \mathcal{P}$, with $\mathcal{P}$ a compact set. 
%
%
In linear-subspace methods~\cite{Hesthaven2015,Quarteroni2015}, $\M$ is a Hilbert space, and $S$ is approximated by a subspace $W \subseteq \M$, e.g., spanned by suitable reduced basis elements $\{\varphi_1,\cdots,\varphi_N\},\; \varphi_i \in W$~\cite{Sirovich1987}. Introducing time-dependent coefficients $c_{\mu}^i(t)\in \R$ indexed by $i$, real solutions $u_{\mu}(t) \in S$ are approximated as 
\begin{equation}\label{eq:subspace-approximation}
    u_{\mu}(t) \approx \sum_{i=1}^N c_{\mu}^i(t)\varphi_i\,. 
\end{equation}
Linear-subspace methods scale to high-dimensional systems, can preserve physical structure and can capture input-output behavior for control applications~\cite{Volkwein2013,Besselink2013}. Theoretical error bounds are well-known: the Kolmogorov $N$-width~\cite{Pinkus1985} gives, depending on $S$, a lower bound to the approximation error that can be achieved by an $N$-dimensional subspace $W$ of $\M$. However, linear-subspace methods are ill-suited for $S$ with a slowly decaying Kolmogorov $N$-width, e.g., in certain transport dominated problems from fluid dynamics or heat transport, where solutions do not evolve on fixed subspaces~\cite{Ohlberger2013}. 

This slow decaying Kolmogorov $N$-width is often referred to as the Kolmogorov barrier, and to break it, nonlinear MOR methods are required~\cite{Peherstorfer2022}, for example by allowing also the basis elements to change over time: 
\begin{equation}\label{eq:non-separable-approximation}
    u_\mu(t) \approx \sum_{i=1}^N c_{\mu}^i(t)\varrho_{\mu,i}(t)\,. 
\end{equation}
Research on MOR methods going beyond the Kolmogorov barrier include: adaptive basis methods~\cite{Peherstorfer2020,Selvaraj2024}, which recompute the reduced basis over time, polynomial approximation methods~\cite{Sharma2023,Barnett2023,Geelen2023}, and machine learning approaches~\cite{Lee2020,Buchfink2023c,Chen2023,Barnett2023,Romor2025}, which are sufficiently complex to express relations of the form~\eqref{eq:non-separable-approximation} but typically incur large computational cost. 

A recent differential geometric description of MOR encompasses many of the aforementioned methods in a common description as submanifold methods~\cite{Buchfink2023a}: given the FOM as a dynamical system on a $\M$, the ROM is a dynamical system on a submanifold (rather than a subspace) $\mathcal{N}$ of $\M$.

In the present article we use a new geometric approach, utilizing the action of Lie groups. Lie groups act on the underlying configuration and state manifolds for a wide variety of physical systems, including rigid bodies, flexible bodies and fluids, where they allow to describe and exploit symmetries of the systems' kinematics and dynamics~\cite{Holm2009}. With reference to Figure~\ref{fig:MorLie_Overview} our approach reduces a dynamical system on a high-dimensional state-manifold $\M$ to a lower-dimensional dynamical system on a Lie group $G$. Let $\Phi:G\times\M\rightarrow\M$ denote an action of $G$ on $\M$, and $u_{\mu,0} \in \M$ the initial condition of an FOM trajectory, then we work towards approximations of the type
\begin{equation}\label{eq:mfd-lie-group-approximation}
    u_\mu(t) \approx \Phi(g_\mu(t),u_{\mu,0})\,.
\end{equation}
While submanifold methods for MOR restrict trajectories to lie on a fixed submanifold, we show that~\eqref{eq:mfd-lie-group-approximation} instead restricts the dynamics to directions tangent to group orbits. Thus, rather than constraining where states can lie, we constrain how they are allowed to move. In this way, initial conditions are preserved: for any initial state $u_0 \in \M$, the approximation evolves on the orbit through $u_0$, so no projection onto a reduced manifold is required. More formally, our method induces a low-dimensional distribution on the state-space manifold, rather than selecting a submanifold of it.

Early Lie group methods for MOR~\cite{Ohlberger2013,Beyn2004,Rowley2000} split dynamics into group and manifold parts (vertical and horizontal components on a principal bundle, respectively) and applied submanifold methods to reduce the latter. The splitting explicitly required equivariance of the underlying dynamical systems, and MOR methods were intrusive, i.e., based on explicit knowledge of the FOM dynamics. Our method does not require equivariance, and allows a novel non-intrusive (i.e., data-driven) approach that exploits the presence of approximate group motions. Recently~\cite{Kleikamp2022} used a learning approach to find a ROM on a subgroup of $G = \Diff(\R^d)$ to reduce Burgers equation with discontinuous shocks. Our method works towards a general geometric approach enabling other choices of $G$, in a fashion that is compatible with machine-learning, but does not rely on it. 

Concretely, our contributions are:
\begin{enumerate}
    \item Development of a new MOR procedure via Lie groups, which we name MORLie (Section~\ref{ssec:MORLie_Theory}).
    \item Formalization of a \textit{group Kolmogorov $N$-width}, providing a lower bound for the approximation error of MORLie (Section~\ref{ssec:group-N-width}).
    \item Development of optimization problems to compute ROMs, both intrusive and non-intrusive, and numerical methods to solve them (Section~\ref{sec:optimization}).
    \item Multiple example applications: analytically recovering the method of freezing~\cite{Ohlberger2013} (Section~\ref{sec:analytic_examples}), point clouds undergoing sheering motions and liver tracking during respiration (Section~\ref{sec:numeric_examples}).
\end{enumerate}

The article is ordered as follows. We conclude the introduction with related literature and notation. Section~\ref{sec:background} provides background on Lie groups and their actions on manifolds. Section~\ref{sec:problem_formulation} introduces the main problem, i.e., to describe an MOR method that results in a ROM on a Lie group and to conceptually connect such a method to existing theory on MOR on manifolds.
Next, Section~\ref{sec:morlie} formally describes MORLie and introduces the group Kolmogorov $N$-width. Afterwards, Section~\ref{sec:optimization} presents intrusive and non-intrusive optimization methods, and initial ideas for hyperreduction. Section~\ref{sec:analytic_examples} provides applications of MORLie to linear transport and the method of freezing~\cite{Ohlberger2013}. Section~\ref{sec:numeric_examples} provides numerical example applications and a discussion of the results. A conclusion and outlook are given in Section~\ref{sec:conclusion}. 

\subsection{Related literature}
The review~\cite{Besselink2013} presents a comparative study of linear-subspace MOR in structural mechanics, describing modal truncation techniques, acceleration methods, Krylov subspace based MOR and balanced truncation. 
In~\cite{Lu2021}, center manifold reduction as well as the Galerkin method, modal synthesis method and proper orthogonal decomposition (POD) are reviewed, where the latter three are linear-subspace methods. 
The review~\cite{Scarciotti2024} focusses on interconnection-based MOR for both linear-subspace cases and nonlinear applications.
In~\cite{Ohlberger2016} various linear-subspace techniques and nonlinear methods are reviewed, also putting a focus on a posteriori error bounds and the Kolmogorov $N$-width of the to-be-reduced systems. 

Submanifold methods for MOR are formalized in a differential geometric language in~\cite{Buchfink2023a}. Most nonlinear approaches to MOR can be seen as particular cases of submanifold methods: transformation-based methods~\cite{Cagniart2019,Cagniart2020_thesis,Black2020,Schulze2023} apply linear or nonlinear, time-dependent transformations to a fixed and reduced basis (still allowing for time-varying coefficients). A similar formalism can also be derived by transforming snapshot matrices with nonlinear transformations~\cite{Taddei2020}, which can reduce the Kolmogorov $N$-width. By transforming a reduced basis, such methods effectively represent the system on a submanifold of the state-space. 
The work of~\cite{Reiss2018,Krah2025} uses a shifted POD, where the shift can be seen as a particular choice of Lie group action~--~we generalize towards accommodating arbitrary Lie group actions, see e.g., Section~\ref{ssec:method_of_freezing}.
In~\cite{Pagliantini2021} integration on Lie groups $SO(n), SP(n)$ is used for geometrically exact integration of time-varying reduced bases for Hamiltonian systems. We interpret this as a submanifold method, since the Lie groups are made to act on a Stiefel (sub-)manifold~\cite{Feppon2018} that the time-varying reduced basis of the total system.  

Various machine learning approaches fall into the category of submanifold methods for MOR:\ 
in~\cite{Lee2020} deep convolutional autencoders are applied for manifold Galerkin projection and manifold least-squares Petrov-Galerkin projection, the approach overcomes the Kolmogorov $n$-width, outperforming linear subspace methods on Benchmark 1D Burgers equation and chemically reacting flows, and aposteriori error estimates are provided. ROMs for nonlinear parameterized PDEs are learned by separately identifying deep neural nets parameterizing the submanifold and the reduced dynamics, in~\cite{Fresca2021}. Structure-preserving MOR method for Hamiltonian systems is presented in~\cite{Buchfink2023c}, where deep autoencoders are used and symplecticity is enforced softly as an additional cost-function term, strongly improving convergence over approaches that do not preserve structure. 
The machine learning approach~\cite{Kaszs2022} learns a 2D subspectral manifold to represent Couette flow and its bifurcations at low Reynolds numbers. Autoencoders, which naturally parameterize submanifolds, are applied to achieve MOR of compressible and incompressible Navier-Stokes equations for a large array of examples, including flow around a wing for different Mach numbers, in~\cite{Romor2025}. In~\cite{Viswanath2025} reduced-order neural operators on sparse graphs are learned in a physics-informed manner, and in~\cite{Steeven2024} data on fixed point-clouds is used to learn physics from partial information, using a reduced latent representation that parameterizes a submanifold. In~\cite{Barnett2023} a multi-stage neural network based approach for mitigating the Kolmogorov $N$-width is applied to shock-dominated, unsteady flow and hyperreduction is achieved. A submanifold is composed of an initial reduced basis and additional, nonlinear input from the neglected basis elements.


The method of freezing~\cite{Ohlberger2016} highlighted earlier, splits certain equivariant PDEs into a Lie group and a vector-space part, applying linear-subspace MOR to the latter. It has its background in reduction of equivariant dynamic systems~\cite{Blankenstein2001,Cendra2001,Rowley2003}, which had early applications to MOR of equivariant PDEs in~\cite{Rowley2000,Beyn2004}.

Some recent machine learning approaches to fluid- and continuum mechanics do not fit into a submanifold framework, but can be interpreted as instances of the general Lie group methods we present:~\cite{Chen2023} learn deformation maps for continuum mechanics applications, achieving a reduced-order latent-state representation that is grid-agnostic, allowing hyperresolution during reconstruction of the full state. In~\cite{Cucchiara2024} a similar approach is applied to both viscid/ inviscid and compressible/incompressible fluid dynamics. Both approaches implicitly learn representations of the Lie group $\Diff(\R^3)$. In~\cite{Kleikamp2022} the Lie group $\Diff(\R^d)$ is explicitly parameterized by integrating a finite set of neural net parameterized vector fields, which are optimized in a non-intrusive fashion for the example of the 2D Burgers equation, where the resulting method is able to represent shocks.

Apart from~\cite{Kleikamp2022}, applications of Lie groups in MOR were restricted to equivariant dynamic systems, and according to our best knowledge no work in MOR explicitly treats approximate equivariance. Within the machine learning community, both equivariant and approximately equivariant systems were investigated in detail:~\cite{Wang2022} investigates approximately equivariant networks for learning approximately equivariant dynamics, and apply this to learning dynamics of smoke plumes and inlet flows. In~\cite{Huang2023} approximately equivariant graph neural networks are studied, and~\cite{Park2025} study approximate equivariance in reinforcement learning. The notion of approximate equivariance was formalized by~\cite{Petrache2023}, who also show the importance to correctly identify the degree of equivariance (or relaxation thereof) in a given problem. In~\cite{Otto2023}, a unified framework is presented to discover, enforce and promote symmetry in machine learning applications, including linear algebraic relations to find symmetry Lie-subgroups of datasets, functions and dynamic systems, given the action of a larger Lie group that fails to be symmetric. 

For a review of the Kolmogorov $N$-width and recent nonlinear widths extending the concept, see the dedicated Section~\ref{ssec:Kolmogorov-N-width}.

\subsection{Notation}
See~\cite{Lee2012} for background on differential geometry and~\cite{Holm2009} for background on Lie groups.
Calligraphic letters $\M,\,\mathcal{N}$ denote smooth manifolds. Given a point $\state \in \M$, we let $T_\state\M$ denote the tangent space at $\state$ and the tangent bundle is the disjoint union $T\M = \dot{\bigcup}_{\state\in\M} T_\state \M$. Then $\Gamma(T\M)$ is the set of sections of $T\M$ which consists of vector fields $X,Y \in \Gamma(T\M)$ over $\M$. The vector field $[X,Y]$ is called the Lie-bracket of vector fields $X,Y$.

Denote as $C^k(\M,\mathcal{N})$ the set of $k$-times differentiable maps from $\M$ to $\mathcal{N}$. We define $C^k(\M) := C^k(\M,\R)$. For $\phi \in C^k(\M,\mathcal{N})$ with $k\geq 1$ the push-forward is $\phi_*: T_\state\mathcal{M} \rightarrow T_{\phi(\state)}\mathcal{N}$. 

The group of diffeomorphisms is $\Diff(\M) \subseteq C^\infty(\M,\M)$, and contains the smooth maps whose inverse is also smooth. 

Further $G$ denotes a Lie group, $g,h$ denote arbitrary elements of $G$ and $e$ denotes the group identity. 
We denote by $\g = T_e G$ the Lie algebra of $G$, by $\widetilde{A},\widetilde{B} \in \g$ its elements, and by $[\widetilde{A},\widetilde{B}]$ their (left) Lie-bracket. 
The exponential map is $\exp:\g\rightarrow G$, and we similarly denote $\exp(\widetilde{A}) =: e^{\widetilde{A}}$. %
Lastly, $\Phi:G\times \M \rightarrow \M$ denotes a group action of $G$ on $\M$. For $\state\in\M$ and when the choice of $\Phi$ is clear from context, we denote $\Phi_g(\state) = g\cdot \state$.



\section{Background}\label{sec:background}

\subsection{Lie groups and their actions} 
Given $G$ an $n$-dimensional Lie group, and $g,h \in G$. The left and right translations are, respectively: 
\begin{align}
    L_g(h) := gh \,,\\
    R_g(h) := hg \,.
\end{align}

\begin{definition}[Group action]\label{def:group_action}
    Let $\Phi:G\times\M\rightarrow\M$ be a smooth map such that $\Phi_g(\cdot) := \Phi(g,\cdot)$ is a diffeomorphism:
    \begin{align}
        \Phi:&\, G \rightarrow \Diff(\M) \,;\;  g \mapsto \Phi_g(\cdot) \,.  
    \end{align}
    Then $\Phi$ is a \textbf{left group action} of $G$ on $\M$ if it is a homomorphism~\cite{Holm2009}: 
    \begin{equation}\label{eq:group_homomorphism}
        \Phi_{gh} = \Phi_g \circ \Phi_h \,, 
    \end{equation}
    and $\Phi$ is a \textbf{right group action} of $G$ on $\M$ if it is an anti-homomorphism~\cite{Holm2009}:
    \begin{equation}\label{eq:group_antihomomorphism}
        \Phi_{gh} = \Phi_h \circ \Phi_g\,.
    \end{equation}
\end{definition}

An example for a left action of $G = GL(3,\R) := \{g\in\R^{3\times 3} \;|\; \det(g) \neq 0\}$ on $x \in \R^3$ is the matrix-vector product $\Phi(g,x) = gx$, and an example for a right action is $\Phi(g,x) = g^{-1}x$. Both actions represent combined rotation, scaling and sheering of $\R^3$. 



The action can have a number of additional properties:


\begin{definition}[Properties of an action]\label{def:properties_of_action}
    The group action $\Phi$ can also have the following properties~\cite{Holm2009}:
    \begin{itemize}
        \item \textbf{Faithful}: for all $g \in G\backslash e$ there exists $\state \in \M$ such that $\Phi(g,\state) \neq \state$.
        \item \textbf{Free}: $g \cdot\state = \state$ if and only if $g = e$. 
        \item \textbf{Proper}: when sequences $\{\state_n\}$ and $\{g_n\cdot\state_n\}$ converge in $\M$, then $\{g_n\}$ converges in $G$. 
        \item \textbf{Transitive}: for all $\state,\stateb \in \M$ there exists $g \in G$ such that $\state = g \cdot \stateb$. 
    \end{itemize}
\end{definition}

Going back to the example of $GL(3,\R)$, the left and right action are neither transitive nor free: the element $x = 0 \in \R^3$ can not be reached from any other $x$, and $g \cdot 0 = 0$ for any $g \in GL(3,\R)$. Instead, the actions of $GL(3,\R)$ on $\R^3 \backslash 0$, i.e., $\R^3$ excluding $0$, are transitive, but again not free. 


\begin{definition}[Orbit of an action]\label{def:orbit}
    The \textbf{orbit} of a point $\state \in \M$ under the action $\Phi$ is the set
    \begin{equation}
        \mathcal{O}(\state) := \{ g \cdot \state \;|\; g \in G \} \,. 
    \end{equation}
\end{definition}

In other words, the orbit $\mathcal{O}(\state)$ collects all elements that can be reached from $\state$ by applying a group element $g$. We note that $\mathcal{O}(\state)$ is a submanifold of $\M$. Again going back to the example of $GL(3,\R)$, $\mathcal{O}(0) = \{0\}$ for $0 \in \R^3$.


\begin{theorem}[Properties of the orbit,~\cite{Holm2009}]\label{thm:properties_of_orbit}
    If $\Phi$ is transitive, then $\mathcal{O}(\state) = \M$, and it is identical for all $\state\in\M$. If $\Phi$ is free, then $\mathcal{O}(\state)$ is isomorphic to $G$ as a manifold, for all $\state \in \M$. If $\Phi$ is free and proper, then $\M/G := \{\mathcal{O}(\state) \;|\; \state \in \M\}$ is a uniquely determined smooth manifold of dimension $\dim \M - \dim G$.   
\end{theorem}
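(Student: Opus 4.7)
The plan is to prove the three claims separately, moving from the easiest (transitivity) to the most technical (the quotient manifold construction under free-and-proper action).

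First, for the transitive case, I would argue directly from the definitions given in Definition~\ref{def:properties_of_action} and Definition~\ref{def:orbit}. Transitivity says that for any $\state, \stateb \in \M$ there is some $g \in G$ with $\stateb = g\cdot \state$, so every $\stateb$ lies in $\mathcal{O}(\state)$, hence $\mathcal{O}(\state) = \M$. Since this argument does not depend on the basepoint, all orbits coincide with $\M$.

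Next, for the free case, I would fix $\state \in \M$ and consider the smooth orbit map $\phi_\state : G \to \M$, $g \mapsto g\cdot \state$. By the definition of orbit, the image is exactly $\mathcal{O}(\state)$, giving surjectivity. Freeness yields injectivity: if $g_1 \cdot \state = g_2 \cdot \state$, then applying $\Phi_{g_2^{-1}}$ and using the homomorphism property~\eqref{eq:group_homomorphism} gives $(g_2^{-1}g_1)\cdot \state = \state$, so $g_2^{-1}g_1 = e$. To upgrade to a manifold isomorphism, I would check that $\phi_\state$ is an immersion: the pushforward at $e \in G$ has trivial kernel by freeness (its kernel would correspond to nontrivial one-parameter subgroups fixing $\state$), and equivariance of $\phi_\state$ under left translation propagates injectivity of the differential to every $g \in G$. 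Together with bijectivity this identifies $\mathcal{O}(\state)$ as a manifold diffeomorphic to $G$.

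The main obstacle is the third claim, which is essentially the Quotient Manifold Theorem. My plan would be: (i) use Part 2 to conclude that every orbit is an embedded submanifold of dimension $\dim G$; (ii) use properness to show that $\M/G$ is Hausdorff and that the projection $\pi : \M \to \M/G$ is an open map; (iii) construct local slices transverse to the orbits, i.e., submanifolds $\mathcal{S} \subset \M$ of dimension $\dim \M - \dim G$ that meet each nearby orbit exactly once. These slices then serve as charts on $\M/G$, and the dimension count is forced by the transversal decomposition $T_\state \M = T_\state \mathcal{O}(\state) \oplus T_\state \mathcal{S}$. Uniqueness of the smooth structure follows from the universal property: any smooth structure on $\M/G$ making $\pi$ a submersion must make slice charts smooth, and hence must coincide with the constructed one. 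The delicate step is (iii), where the tube-neighborhood construction genuinely uses properness to ensure that the slice intersects only one orbit locally; I would invoke this via the standard reference~\cite{Holm2009} rather than reproduce the full argument here.
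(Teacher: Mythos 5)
The paper does not actually prove this statement: it is imported verbatim from the cited reference (the standard sources being~\cite{Holm2009} and the Quotient Manifold Theorem in~\cite{Lee2012}), so there is no in-paper argument to compare against. Judged on its own, your sketch is the standard textbook proof and is essentially sound: the transitive case is immediate from the definitions; the free case correctly combines injectivity of the orbit map $\phi_\state:g\mapsto g\cdot\state$ with injectivity of its differential at $e$ (the kernel is the Lie algebra of the stabilizer, which freeness forces to be trivial, and equivariance $\phi_\state\circ L_g=\Phi_g\circ\phi_\state$ propagates this to all of $G$); and the third part is a faithful outline of the slice/tube construction, with the genuinely hard step deferred to the reference exactly as the paper itself does.

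One caveat is worth flagging. Freeness alone gives only an \emph{injective immersion} $\phi_\state:G\to\M$, so ``$\mathcal{O}(\state)$ is isomorphic to $G$ as a manifold'' must be read with respect to the intrinsic immersed-submanifold structure on the orbit, not the subspace topology: the irrational flow on the torus is a free $(\R,+)$-action whose orbits are dense, hence not embedded copies of $\R$. Consequently, step (i) of your third part --- ``use Part 2 to conclude that every orbit is an \emph{embedded} submanifold'' --- overstates what Part 2 delivers; embeddedness (indeed closedness) of the orbits is a consequence of \emph{properness}, which you have available in that part but are not invoking for this purpose. Rerouting step (i) through properness rather than through Part 2 closes this small gap; the remainder of the argument is fine.
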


\begin{definition}[Infinitesimal generator]\label{def:infinitesimal_generator}
    Given $\widetilde{A} \in \g$ and $\Phi:G\times\M\rightarrow\M$, the \textbf{infinitesimal generator} $X_{\widetilde{A}} \in \Gamma(T\M)$ of $\widetilde{A}$ w.r.t.\ $\Phi$ is defined pointwise as: 
    \begin{equation}\label{eq:inf_generator}
        X_{\widetilde{A}}(\state) := \frac{\ext}{\ext t} \Phi(\exp(\widetilde{A}t),\state)\,.
    \end{equation}
\end{definition}
Intuitively, the infinitesimal generator $X_{\widetilde{A}}(\state)$ describes the velocity at $\state$ generated by the action of $\exp(\widetilde{A}t)$. 

\begin{definition}[Distribution]
    A smooth distribution $\Delta \subseteq T\M$ assigns to any $\state\in\M$ a subspace $\Delta_\state \subseteq T_\state\M$ smoothly, i.e., for any $\state \in \M$ there are smooth vector fields $\{X_1,\cdots, X_k\} \subseteq \Gamma(T\M)$ and $\mathcal{U}_\state \subseteq \M$ a neighborhood of $\state$ such that 
    \begin{equation}
        \forall \stateb \in \mathcal{U}_\state:\; \Delta_\stateb = \textnormal{span}\{X_1(\stateb),\cdots, X_k(\stateb)\}\,.
    \end{equation}
    A distribution is called
    \begin{itemize}
        \item \textbf{Regular} of dimension $k$ if $\Delta_\state \subseteq T_\state \M$ is $k$-dimensional for every $\state \in \M$.
        \item \textbf{Integrable} if for any $\state \in \M$ exists a submanifold $\mathcal{N} \subseteq \M$ such that $\Delta_\state = T_\state \mathcal{N}$.
    \end{itemize}
\end{definition}

In the present work, the relevant distribution will be the span of infinitesimal generators.



\begin{definition}[Distribution induced by $G,\Phi$]\label{def:induced_distribution}
    A Lie group $G$ and an action $\Phi$ induce, at each $\state \in \M$, a subspace $\Delta_\state \subseteq T_\state \M$ by the image of the infinitesimal generator: 
    \begin{align}
        \Delta_\state 
        &= \{ X_{\widetilde{A}}(x) \,|\, \widetilde{A} \in \g \}\,. 
    \end{align}
    This in turn defines a distribution $\Delta \subseteq T\M$ as
    \begin{equation}
        \Delta = \dot{\bigcup}_{\state\in\M} \Delta_\state\,,
    \end{equation}
    which we call the distribution induced by $\Phi$.
\end{definition}

The induced distribution collects the tangent vectors to $\mathcal{O}(\state) \subseteq \M$. 

\begin{theorem}[Properties of the induced distribution]\label{thm:properties_of_induced_distribution}
    If the action $\Phi$ is free, then the induced distribution $\Delta$ is regular of dimension $\dim(G)$. 
    The induced distribution $\Delta$ is always integrable, and tangent to the orbits $\mathcal{O}(\state)$ at every $\state\in\M$. 
\end{theorem}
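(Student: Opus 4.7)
The plan is to establish the tangent-to-orbit property first, from which integrability will follow immediately, and then to argue regularity separately under the freeness hypothesis. For the first part I would fix $\state \in \M$ and consider the orbit map $\phi_\state : G \to \M$, $g \mapsto \Phi(g,\state)$. By Definition~\ref{def:infinitesimal_generator} its differential at the identity sends $\widetilde{A} \in T_e G = \g$ to $X_{\widetilde{A}}(\state)$, so the image of $(\phi_\state)_*|_e$ is exactly $\Delta_\state$ by Definition~\ref{def:induced_distribution}. Since $\phi_\state$ takes values in the orbit $\mathcal{O}(\state)$, which carries the structure of an immersed submanifold of $\M$ via $\mathcal{O}(\state) \cong G/G_\state$ (orbit-stabilizer theorem, with $G_\state$ the isotropy subgroup at $\state$), we have $\Delta_\state \subseteq T_\state \mathcal{O}(\state)$. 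Conversely, any tangent vector at $\state$ to $\mathcal{O}(\state)$ is the initial velocity of a curve $t \mapsto g(t)\cdot \state$, hence equals $X_{\dot g(0)}(\state) \in \Delta_\state$. This gives $\Delta_\state = T_\state \mathcal{O}(\state)$, proving simultaneously that $\Delta$ is tangent to the orbits and integrable (take $\mathcal{N} = \mathcal{O}(\state)$ in the definition of integrable).

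For regularity under freeness, the key step is to show that the linear map $\g \to T_\state \M$, $\widetilde{A} \mapsto X_{\widetilde{A}}(\state)$ is injective for every $\state \in \M$. Suppose $X_{\widetilde{A}}(\state) = 0$ and set $\gamma(t) := \Phi(\exp(t\widetilde{A}), \state)$. Using $\exp((t+s)\widetilde{A}) = \exp(s\widetilde{A})\exp(t\widetilde{A})$ together with the homomorphism property of $\Phi$ (or its anti-homomorphism analogue for right actions), a short calculation yields $\dot{\gamma}(t) = X_{\widetilde{A}}(\gamma(t))$. Since $X_{\widetilde{A}}(\state) = 0$, both $\gamma$ and the constant curve $t \mapsto \state$ solve the same ODE with the same initial data; ODE uniqueness forces $\Phi(\exp(t\widetilde{A}),\state) = \state$ for all $t$, and freeness then gives $\exp(t\widetilde{A}) = e$ for all $t$, i.e., $\widetilde{A} = 0$. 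Consequently $\dim \Delta_\state = \dim \g = \dim G$ at every $\state$. Smoothness of the distribution then follows by fixing a basis $\widetilde{A}_1, \ldots, \widetilde{A}_{\dim G}$ of $\g$: the infinitesimal generators $X_{\widetilde{A}_i}$ are smooth vector fields on $\M$ (smoothness of $\Phi$), are pointwise linearly independent by the injectivity just shown, and span $\Delta_\stateb$ at every $\stateb \in \M$, providing a global smooth frame for $\Delta$.

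The main obstacle I expect is the ODE identity $\dot{\gamma}(t) = X_{\widetilde{A}}(\gamma(t))$: it rests on applying the homomorphism property in exactly the right order and must be verified separately for left and right actions, since the excerpt admits both conventions in Definition~\textnormal{2.2}. Beyond that verification, the remaining ingredients — smooth dependence of $X_{\widetilde{A}}(\state)$ on $\state$, and the immersed-submanifold structure of $\mathcal{O}(\state)$ coming from the orbit-stabilizer theorem — are standard, so the proof should be short but demands care with the left/right distinction when establishing the flow identity for $\gamma$.
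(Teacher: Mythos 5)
Your proof is correct, and it is essentially a self-contained expansion of what the paper delegates entirely to citations: the paper's proof consists of three pointers to Lee (Proposition 7.26 for regularity under freeness, Proposition 19.2 plus Frobenius' theorem for integrability, and the definitions for tangency). The one place where your route genuinely differs in substance is integrability: the paper goes through involutivity and Frobenius, whereas you exhibit the integral manifolds directly by showing $\Delta_\state = T_\state\mathcal{O}(\state)$ and taking $\mathcal{N} = \mathcal{O}(\state)$, which is cleaner and matches the paper's (pointwise) definition of integrability exactly; it also makes the tangency claim and the integrability claim a single statement rather than two. Your injectivity argument for the freeness part --- the flow identity $\dot{\gamma}(t) = X_{\widetilde{A}}(\gamma(t))$ followed by ODE uniqueness and $\exp(t\widetilde{A}) = e$ --- is exactly the content of the cited Lee proposition, and your care with the left/right convention is warranted but unproblematic since the one-parameter subgroup $\exp(t\widetilde{A})$ is abelian, so the factorization works under either the homomorphism or anti-homomorphism property. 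The only ingredient you lean on without proof is that $\mathcal{O}(\state) \cong G/G_\state$ is an immersed submanifold, but the paper itself asserts this immediately after Definition~\ref{def:orbit}, so you are on the same footing.
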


\begin{proof}
    The first statement is a direct consequence of~\cite[Proposition 7.26]{Lee2012}. Integrability of $\Delta$ follows from~\cite[Proposition 19.2]{Lee2012} and Frobenius' Theorem~\cite[Theorem 19.12]{Lee2012}. Finally, tangency of $\Delta_x$ and $\mathcal{O}(\state)$ is a direct consequence of Defs.~\ref{def:orbit},~\ref{def:infinitesimal_generator} and~\ref{def:induced_distribution}. 
\end{proof}

In other words, integrability of $\Delta$ directly corresponds to $\mathcal{O}(\state)$ being submanifolds of $\M$.


\subsection{The Kolmogorov \textit{N}-width}\label{ssec:Kolmogorov-N-width}
Given a Hilbert space $\M$, and denoting by $W\subseteq\M$ an $N$-dimensional \textit{subspace}, the Kolmogorov $N$-width of a \textit{subset} $S \subseteq \M$ is 
\begin{align}\label{eq:Kolmogorov-N-width}
    d_N(S) = &\, \inf_{\substack{W \subseteq \M  \\ \dim W = N}} \; \sup_{\state \in S} \; \inf_{y\in W} \| \state - y\| \\
    = &\,  \inf_{\substack{W \subseteq \M  \\ \dim W = N}} E(W,S) \nonumber \,,
\end{align}
with $E(W,S) = \sup_{\state \in S} \; \inf_{y\in W} \| \state - y\|$ the maximum approximation error.

In the context of MOR the Kolmogorov $N$-width is analyzed for the set of solution snapshots collected in $S$, and it provides a lower bound on the worst-case error of linear-subspace methods in approximating $S$. Analytic estimates of the Kolmogorov $N$-width are well-known for certain problem classes: for example, the Kolmogorov $N$-width is known to decay exponentially for diffusion problems~\cite{Bachmayr2010}, but only slowly (order $\sim\frac{1}{\sqrt{N}}$) for certain linear transport and wave problems~\cite{Greif2019}. While achieving the theoretical best case given by $d_N(S)$ is difficult, it can often be approached. Yet, the slow decay for linear transport suggests that linear-subspace methods are ill-suited for most transport dominated problems~\cite{Peherstorfer2022}.

Lower bounds for nonlinear MOR methods are given by nonlinear alternatives to the linear-subspace Kolmogorov $N$-width. For these nonlinear widths the linear width~\eqref{eq:Kolmogorov-N-width} generally presents an upper bound, refer also to~\cite{Cohen2023,Buchfink2023b}. 


Denoting $D_N:\R^N\rightarrow\M$ a continuous, Lipschitz decoder function, constant $\gamma > 0$, then the Lipschitz $N$-width~\cite{Petrova2021} of $S \subset \M$ is 
\begin{align}\label{eq:Lipschitz-N-width}
    d^{\textnormal{Lip}}_N(S) = & \inf_{\substack{D_N,\|\cdot\| \\ \gamma\textnormal{-Lipschitz}}} \sup_{\state \in S} \inf_{\|\bar{g}\| \leq B} \| \state - D_N (\bar{g})\| \,,
\end{align}
where the final infimum is also over a suitably restricted class of norms $\|\cdot\|$ on $\M$.


In~\cite{Engwer2025} the Sectional Komogorov $N$-width is presented. Denote by $\sigma:\mathcal{P}\rightarrow\M$, $\sigma \in \Gamma$ a section of a fiber bundle with the parameter manifold $\mathcal{P}$ as the base space, and the state manifold $\mathcal{M}$ as the fiber. Then the linear Sectional $N$-width measures distance to an optimally chosen linear sub-bundle $\Gamma_N$: 
\begin{align}\label{eq:sectional-N-width}
    d^{\textnormal{Sec}}_N(S) = \inf_{\substack{\Gamma_N \subset \Gamma\;\\ \dim \Gamma_N \leq N \\ \textnormal{linear}}} \sup_{\mu \in \mathcal{P}} \inf_{\sigma_N \in \Gamma_N} \| \sigma(\mu) - \sigma_N(\mu)\| \,.
\end{align}
Non-linear extensions in the same work~\cite{Engwer2025} emphasize that $\M$ can be a manifold, in which case $\Gamma_N$ is a sub-bundle for which the fiber $\sigma_N(\mu)$ is a submanifold of $\sigma(\mu)$. 




\section{Problem formulation}\label{sec:problem_formulation} 
Assume the FOM to be the dynamic system on $\M$:
\begin{equation}\label{eq:FOM}
    \dot{\state} = X_\mu(\state)\,,\; \state(0) = \state_{\mu,0}\,,
\end{equation}
with parameters $\mu \in \mathcal{P}$ parameterizing both the vector field $X_\mu\in\Gamma(T\M)$ and initial conditions $\state_{\mu,0}$. Solutions of~\eqref{eq:FOM} will be denoted $\state(t)$, or $\state_\mu(t)$ to emphasize the parameter-dependence. Let a set of solution snapshots be given as
\begin{equation}\label{eq:solution-snapshots}
    S = \{\state_\mu(t) \,|\, t \in [0,T], \mu \in \mathcal{P} \}\,.
\end{equation}
We will mention it explicitly when finite or countably infinite snapshot sets $S$ are considered, or when $S \subset \M \times \mathcal{P} \times [0,T]$ by abuse of notation.

The goal of this paper is to find a Lie group $G$ with dimension $\dim G \ll \dim \M$, action $\Phi:G\times\M\rightarrow\M$, and dynamics of $g_\mu(t)$ such that
\begin{equation}\label{eq:MorLie-trajectory-approximation}
    \state_\mu(t) \approx \Phi(g_\mu(t), \state_{\mu,0}) 
\end{equation}
for $\state_\mu(t) \in S$. In particular, we aim to answer the questions:
\begin{enumerate}
    \item How can a dynamics for $g_\mu(t)$ be chosen and optimized? In Section~\ref{ssec:MORLie_Theory}.
    \item What is a theoretical error bound for the resulting methods? In Section~\ref{ssec:group-N-width}.
    \item What are choices for $G$, $\Phi$ such that dynamics on $G$ can be evaluated more efficiently than the FOM dynamics? In Section~\ref{sec:optimization}.
    \item How does this approach relate to submanifold methods? In Appendix~\ref{sec:manimor}.
\end{enumerate}

\section{MOR via Lie groups}\label{sec:morlie}





We describe MORLie and introduce the group Kolmogorov $N$-width. 

\subsection{ROM dynamics}\label{ssec:MORLie_Theory}





We investigate the choice of dynamics for $g_\mu(t)$ in~\eqref{eq:MorLie-trajectory-approximation} to arrive at a description of the ROM dynamics in MORLie. 

We want to choose the Lie group dynamics such that the evolution of $\Phi(g_\mu(t), \state_{\mu,0})$ closely follows (w.r.t.\ a to-be-defined metric) the full-order state $\state_\mu(t)$. Since the FOM dynamics $X_\mu(\state_\mu)$ vary with $\state_\mu$, the optimal Lie group dynamics should also vary with $\state_\mu$. We encode this by a map $\rho_\mu:\M\rightarrow\g$, and immediately state the main technical result of this subsection: the induced dynamics on $\M$ generated by dynamics on $G$. The key idea is that a vector in the Lie algebra produces a vector field on $\M$ via the infinitesimal generator.  
%
\begin{theorem}[MorLie reduction, reconstruction and induced dynamics]\label{thm:MorLie reconstruction}
    Given a Lie group $G$, a left action $\Phi:G\times\M\rightarrow\M$ and a map $\rho_\mu:\M\rightarrow\g$. Define $\appstate_\mu(t) \in \M$ by 
    \begin{equation}\label{eq:MorLie-Reconstruction} 
        \appstate_\mu(t) := \Phi(g_\mu(t),\state_{\mu,0})\,,
    \end{equation} 
    and define dynamics on $G$ to follow $\rho_\mu(\appstate_\mu) \in\g$: 
    \begin{equation}\label{eq:MorLie-group-dynamics} 
        \dot{g}_\mu = {R_{g_\mu}}_* \rho_\mu\big(\appstate_\mu\big)\,,\; g(0) = e \,.
    \end{equation}
    Then
    \begin{equation}\label{eq:appstate-dynamics}
        \frac{\ext}{\ext t} \appstate_\mu(t) = 
        X_{\rho_\mu\big(\appstate_\mu(t)\big)}\big(\appstate_\mu(t)\big)\,,
    \end{equation} 
    where $X_{\rho_\mu(\appstate_\mu)} \in \Gamma(\Delta)$ is the infinitesimal generator of $\rho_\mu(\appstate_\mu)$ w.r.t.\ $\Phi$ (cf.\ Def.~\ref{def:infinitesimal_generator}), and $\Delta\subseteq T\M$ is the induced distribution (cf.\ Def.~\ref{def:induced_distribution}). Equation~\eqref{eq:appstate-dynamics} also holds when $\Phi$ is a right action, if the left-translation were used in~\eqref{eq:MorLie-group-dynamics}. 
\end{theorem}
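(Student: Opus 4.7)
The plan is to differentiate the reconstruction~\eqref{eq:MorLie-Reconstruction} in $t$ by the chain rule, substitute the group dynamics~\eqref{eq:MorLie-group-dynamics}, and then recognize the resulting expression as the infinitesimal generator evaluated along $\appstate_\mu$. The main tool is the homomorphism property~\eqref{eq:group_homomorphism} of the left action, which I would use to convert a curve through $g_\mu(t) \in G$ into a curve through $\appstate_\mu(t) \in \M$ whose tangent is, by Def.~\ref{def:infinitesimal_generator}, an infinitesimal generator.

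Concretely, writing $D_1\Phi$ for the differential of $\Phi$ with respect to its first (group) argument, the chain rule applied to $\appstate_\mu(t) = \Phi(g_\mu(t),\state_{\mu,0})$ gives
\begin{equation}
    \frac{\ext}{\ext t}\appstate_\mu(t) = (D_1\Phi)\big(g_\mu(t),\state_{\mu,0}\big)\cdot \dot{g}_\mu(t).
\end{equation}
Substituting $\dot{g}_\mu = (R_{g_\mu})_* \rho_\mu(\appstate_\mu)$ reduces the theorem to the pointwise identity
\begin{equation}\label{eq:plan-key-identity}
    (D_1\Phi)(g,\state_0)\cdot (R_g)_*\widetilde{A} \;=\; X_{\widetilde{A}}\!\left(\Phi(g,\state_0)\right), \qquad g \in G,\; \state_0 \in \M,\; \widetilde{A}\in\g,
\end{equation}
applied at $g=g_\mu(t)$, $\state_0=\state_{\mu,0}$ and $\widetilde{A}=\rho_\mu(\appstate_\mu(t))$; the membership $X_{\rho_\mu(\appstate_\mu)}\in\Gamma(\Delta)$ is then immediate from Def.~\ref{def:induced_distribution}.

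To prove~\eqref{eq:plan-key-identity} I would consider the curve $t\mapsto \exp(\widetilde{A}t)\,g \;=\; R_g(\exp(\widetilde{A}t))$ in $G$, which passes through $g$ at $t=0$ with velocity $(R_g)_*\widetilde{A}$. Pushing this curve through $\Phi(\cdot,\state_0)$ and invoking~\eqref{eq:group_homomorphism} gives
\begin{equation}
    \Phi\!\left(\exp(\widetilde{A}t)\,g,\state_0\right) \;=\; \Phi\!\left(\exp(\widetilde{A}t),\,\Phi(g,\state_0)\right),
\end{equation}
and differentiating both sides at $t=0$ yields exactly~\eqref{eq:plan-key-identity}: the left-hand side produces $(D_1\Phi)(g,\state_0)\cdot(R_g)_*\widetilde{A}$ by the chain rule, and the right-hand side is $X_{\widetilde{A}}(\Phi(g,\state_0))$ by Def.~\ref{def:infinitesimal_generator}.

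The right-action case is obtained by swapping the roles of left and right translation. Under the anti-homomorphism~\eqref{eq:group_antihomomorphism}, one has $\Phi_{gh}=\Phi_h\circ\Phi_g$, so the curve whose image under $\Phi(\cdot,\state_0)$ reproduces the defining curve of the infinitesimal generator is $t\mapsto g\,\exp(\widetilde{A}t) = L_g(\exp(\widetilde{A}t))$, with velocity $(L_g)_*\widetilde{A}$. Replacing $R_g$ by $L_g$ throughout then gives the analogue of~\eqref{eq:plan-key-identity} and yields~\eqref{eq:appstate-dynamics}. I expect the main obstacle to be not any single nontrivial computation but rather the careful bookkeeping of left versus right translation: the infinitesimal generator in Def.~\ref{def:infinitesimal_generator} is built by pre-composing $g$ with $\exp(\widetilde{A}t)$, and this pre-composition is matched by right-translation of $g$ only under the left-action homomorphism property, forcing the swap $R\leftrightarrow L$ in the right-action case.
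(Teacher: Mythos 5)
Your proposal is correct and follows essentially the same route as the paper's own proof: both replace the trajectory $g_\mu$ by the tangent curve $s\mapsto e^{\widetilde{A}s}g$ (resp. $g\,e^{\widetilde{A}s}$ for a right action), convert it via the (anti-)homomorphism property into the defining curve of the infinitesimal generator at $\Phi(g,\state_0)$, and differentiate. You merely isolate the pointwise identity $(D_1\Phi)(g,\state_0)\cdot(R_g)_*\widetilde{A}=X_{\widetilde{A}}(\Phi(g,\state_0))$ as an explicit intermediate step, which the paper carries out inline.
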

\begin{proof}
    For a left action $\Phi$, differentiation of $\appstate_{\mu}(t)$ yields:
    \begin{align}
        \frac{\ext}{\ext t} \appstate_\mu(t) 
        &= \frac{\ext}{\ext t} \Phi(g_\mu(t), \state_{\mu,0}) \\ 
        & = \frac{\ext}{\ext s} \Phi(e^{\rho(\appstate_\mu(t)) s} g_\mu(t), \state_{\mu,0}) \\
        & = \frac{\ext}{\ext s} \Phi(e^{\rho(\appstate_\mu(t)) s}, \Phi_{g_\mu(t)} \state_{\mu,0}) \\
        & = X_{\rho_\mu(\appstate_\mu(t))}\big(\appstate_\mu(t)\big) 
    \end{align}
    The second equality uses that $g_\mu(s)$ in~\eqref{eq:MorLie-group-dynamics} is tangent to $e^{\rho(\appstate_\mu(t)) s}g_\mu(t)$ at $t = s$. The third equality uses~\eqref{eq:group_homomorphism}, using that $\Phi$ is a left action. The fourth equality uses~\eqref{eq:inf_generator}.
    For a right action $\Phi$, and if ${R_{g_\mu}}_*$ in~\eqref{eq:MorLie-group-dynamics} were replaced by ${L_{g_\mu}}_*$: then $g_\mu(s)$ would be tangent to $g_\mu(t)e^{\rho(\appstate_\mu(t))s}$ at $t = s$, in the second equality, and the third equality would use~\eqref{eq:group_antihomomorphism}. 
\end{proof}

We formally define further terms inspired by Theorem~\ref{thm:MorLie reconstruction}:
\begin{definition}[MorLie reduction \& approximated dynamics]\label{def:AFOM}
    Given the FOM $(\M,X_\mu)$, and a tuple $(G,\Phi,\rho_\mu)$ of a Lie group $G$, a group action $\Phi:G\times \M \rightarrow \M$ and $\rho_\mu:\M\rightarrow \g$. Then we call $\rho_\mu$ the \textbf{reduced vector field}, and $\appX \in \Gamma(\Delta) \subseteq \Gamma(T\M)$ 
    \begin{equation}
        \appX_\mu(\state):= X_{\rho_\mu(\state)}(\state) \,,
    \end{equation}
    the \textbf{approximated dynamics}. This defines the dynamical system 
    \begin{equation}\label{eq:AFOM}
        \dot{\appstate}(t) = \appX_\mu(\appstate)\,,\; \appstate(0) = \state_{\mu,0} \,.
    \end{equation}
    We call $\appstate(t)$ (cf.~\eqref{eq:MorLie-Reconstruction}) the \textbf{reconstructed solution}.
\end{definition}
Thus, $\rho$ defines dynamics on $G$, which induce approximated dynamics $\appX$ on $\M$, constrained to the orbits $\mathcal{O}(\state)$ and tangent to the distribution $\Delta$.

A reduced order model is obtained when $\dim G < \dim \M$: then Theorem~\eqref{thm:MorLie reconstruction} lets us solve a low dimensional dynamical system on the Lie group $G$ to compute solutions $\appstate_\mu$ of the high-dimensional approximated dynamics $\appX$ on $\M$. We call~\eqref{eq:MorLie-group-dynamics} a family of \textbf{reduced order models on the Lie group} $G$, since they vary with initial state $\state_{\mu,0}$ and parameter $\mu$. This presents a formulation for MOR on manifolds that allows to approximate solutions globally, where different $\state_{\mu,0}$ can lead to different dynamical systems on $G$. Distinct ROMs can in principle also be qualitatively different, i.e., having different stability properties, number of equilibria or periodic orbits. 

Theorem~\ref{thm:MorLie reconstruction} shows that the evolution of $\appstate_\mu(t)$ in~\eqref{eq:MorLie-group-dynamics} follows an approximated dynamics $\appX_\mu \in \Gamma(\Delta)$, which is restricted to lie within the distribution $\Delta \subseteq T\M$ induced by $G,\Phi$ (see Definition~\ref{def:induced_distribution}). Theorem~\ref{thm:MorLie reconstruction} can alternatively be interpreted as a choice of a family of approximate dynamics $\appX_\mu \in \Gamma(\Delta)$ that admit a family of ROMs of the form~\eqref{eq:MorLie-group-dynamics}.

We emphasize that~\eqref{eq:MorLie-group-dynamics} represents any autonomous first-order dynamics $\dot{g}_\mu = f(g_\mu)$ without loss of generality: the choice of dynamics $\rho_\mu(\appstate_\mu) \in\g$ can be written as $\widetilde{A}_\mu(g_\mu) := \rho_\mu(\appstate_\mu) \in\g$, hiding dependence on $\state_{\mu,0}$ on the RHS in dependence on $\mu$ on the LHS.\@ Then $\widetilde{A}_\mu(g_\mu) = {R_{g^{-1}}}_*f(g_\mu)$ implements arbitrary autonomous first-order dynamics.

Theorem~\ref{thm:MorLie reconstruction} also shows that the approximated solution $\appstate(t) \in \mathcal{O}(\state_0)$ is restricted to lie in the orbit $\mathcal{O}(\state_0) \subseteq \M\,$ (see Definition~\ref{def:orbit}). The following Section uses this fact to define the group Kolmogorov $N$-width.


\subsection{The group Kolmogorov \textit{N}-width}\label{ssec:group-N-width}
Suppose that $\M$ is a metric space equipped with a distance $\textnormal{dist}:\M\times\M\rightarrow\R$, and given the set $S \subseteq \M$ representing solution snapshots (cf.~\eqref{eq:solution-snapshots}). Define $\Xi_N$ as a set of pairs $(G,\Phi)\in\Xi_N$ of $N$-dimensional Lie groups $G$ and actions $\Phi:G\times\M\rightarrow\M$, henceforth called admissible pairs. Then we define:
\begin{definition}[Kolmogorov $\Xi_N$-width]
    The Kolmogorov $\Xi_N$-width of a set $S\subseteq\M$ relative to an arbitrary point $\state_0\in S$ is
    \begin{equation}\label{eq:group-width}
        d_{\Xi_N}(S,\state_0) := \inf_{\substack{(G,\Phi) \in \Xi_N}}\; \sup_{\state \in S}\; \inf_{y \in \mathcal{O}(\state_0)} \textnormal{dist}(\state,y) \,.
    \end{equation} 
    For $S$ a set of solution snapshots~\eqref{eq:solution-snapshots}, and denoting $\state_\mu([a,b]):=\{\state_\mu(t)\;|\; t\in[a,b]\}$, we define the Kolmogorov $\Xi_N$-width of $S$ over the initial conditions $\state_{0,\mu} \in S$ as
    \begin{equation}\label{eq:group-width-over-IC}
        d^T_{\Xi_N}(S) := \sup_{\mu \in \mathcal{P}} d_{\Xi_N}\big(\state_{\mu}([0,T]),\state_{0,\mu}\big)\,,
    \end{equation}
    and the Kolmogorov $\Xi_N$-width of $S$ over a time-horizon $\tau \leq T$ as
    \begin{equation}\label{eq:group-width-over-tau}
        d^{T,\tau}_{\Xi_N}(S) := \sup_{
            \substack{\mu \in \mathcal{P},\\ 
                        t \in [0,T-\tau]}}
         d_{\Xi_N}\big(\state_{\mu}([t,t+\tau]),\state_{\mu}(t)\big)\,.
    \end{equation}
\end{definition}
Intuitively, the group width measures how well solution trajectories can be approximated by a single group orbit through the initial state.

In particular, $d_{\Xi_N}(S,\state_0)$ is the worst case error over the orbit $\mathcal{O}(\state_0)$, for the best choice of admissible pair $(G,\Phi) \in \Xi_N$. In turn, $d^T_{\Xi_N}(S)$ formalizes the error between FOM-trajectories $\state_\mu(t)$ and ROM-trajectories $\appstate_\mu(t)$ (cf.~\eqref{eq:MorLie-trajectory-approximation}), returning the worst case over all initial conditions. Finally, $d^{T,\tau}_{\Xi_N}(S)$ measures a worst-case error between $\state_\mu(t)$ and $\appstate_\mu(t)$ over a time-horizon $\tau$, and without restricting itself to ROM-trajectories starting at $x_{\mu,0}$. Alternatively, we can interpret $\frac{1}{\tau}d^{T,\tau}_{\Xi_N}(S)$ as a bound on the increase of $\textnormal{dist}\big(\bar{\state}(t),\state(t)\big)$ per $\tau$. We note that Definitions~\eqref{eq:group-width-over-IC} and~\eqref{eq:group-width-over-tau} coincide for $\tau = T$ ($d^{T}_{\Xi_N}(S) = d^{T,T}_{\Xi_N}(S)$), in which case the supremum over $t \in [0, T - \tau]$ in~\eqref{eq:group-width-over-tau} is over a single point $t = 0$. 

If $\Xi_N$ is not restricted, e.g., $\Xi_1$ contains all possible pairs of one-dimensional Lie groups and compatible actions, then $d^{T,\tau}_{\Xi_1}(S) = 0$ irrespective of $S$: this can be seen for $G = (\R^1,+)$ and $\Phi(a,\state) = \Psi^a_X(\state)$ with $\Psi^t_X:\M\rightarrow\M$ the flow of $X \in \Gamma(TM)$. This exactly recovers the FOM trajectories, but does not result in a useful ROM, as this action is computationally intense to evaluate.

Similarly, if $y$ were not restricted to $y \in \mathcal{O}(\state_0)$ then $d_{\Xi_N}(S,S) = 0$, i.e., it would be possible to pick $y = \state$ for any $\state\in S$. 

\begin{proposition}
    Assume that $\M$ is a Hilbert space with $\textnormal{dist}(x,y) = \|x - y\|$, define  
    \begin{align}
        \Xi^{\textnormal{Vec}}_N := \{&(\R^N,\Phi)\;|\; \Phi(g,x) := x + g^i e_i\,, \\ 
        & \{e_1,\cdots,e_N\} \subseteq \M \textnormal{ lin. indep.} \}\,. \nonumber
    \end{align} 
    Then we recover the Kolmogorov $N$-width~\eqref{eq:Kolmogorov-N-width} from~\eqref{eq:group-width}
    \begin{equation}
        d_N(S) = d_{\Xi^\textnormal{Vec}_N}(S,0)\,.
    \end{equation}
\end{proposition}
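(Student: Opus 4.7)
The plan is to unpack the definitions on both sides and observe that they coincide term by term. The key observation is that the orbit $\mathcal{O}(0)$ under the action $\Phi(g,x) = x + g^i e_i$ is exactly the linear span of $\{e_1, \ldots, e_N\}$, so the inner infimum in the group Kolmogorov $\Xi_N$-width reduces to the usual distance from a subspace.

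First I would fix an admissible pair $(\R^N, \Phi) \in \Xi^{\textnormal{Vec}}_N$ determined by a linearly independent tuple $(e_1, \ldots, e_N)$, and compute the orbit:
\begin{equation}
    \mathcal{O}(0) = \{\Phi(g, 0) \mid g \in \R^N\} = \{g^i e_i \mid g \in \R^N\} = \textnormal{span}\{e_1, \ldots, e_N\} =: W.
\end{equation}
Since $\{e_1, \ldots, e_N\}$ is linearly independent, $W$ is an $N$-dimensional subspace of $\M$. Conversely, given any $N$-dimensional subspace $W \subseteq \M$, any basis $(e_1, \ldots, e_N)$ of $W$ yields an admissible pair in $\Xi^{\textnormal{Vec}}_N$ whose orbit through $0$ is $W$. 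Thus the map from admissible pairs in $\Xi^{\textnormal{Vec}}_N$ to $N$-dimensional subspaces via $(\R^N, \Phi) \mapsto \mathcal{O}(0)$ is surjective.

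Next I would substitute this into~\eqref{eq:group-width} with $\state_0 = 0$ and $\textnormal{dist}(x,y) = \|x - y\|$. For each admissible pair, the inner double expression becomes
\begin{equation}
    \sup_{\state \in S} \inf_{y \in \mathcal{O}(0)} \|\state - y\| = \sup_{\state \in S} \inf_{y \in W} \|\state - y\| = E(W, S),
\end{equation}
which depends on the admissible pair only through the subspace $W$. Taking the infimum over admissible pairs therefore equals the infimum of $E(W, S)$ over all $N$-dimensional subspaces $W \subseteq \M$, which is exactly $d_N(S)$ by~\eqref{eq:Kolmogorov-N-width}.

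There is no real obstacle here, as the statement is essentially a reformulation; the only point worth being careful about is that multiple bases $(e_1, \ldots, e_N)$ give rise to the same orbit, so one should argue by surjectivity of the bases-to-subspaces map (which is immediate) rather than bijectivity, to ensure both infima range over the same set of values.
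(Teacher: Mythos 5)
Your proof is correct and follows essentially the same route as the paper: identify the orbit $\mathcal{O}(0)$ with the span of $\{e_1,\cdots,e_N\}$, note that every $N$-dimensional subspace arises this way, and conclude that the two nested infima range over the same values. The remark about arguing via surjectivity rather than bijectivity of the bases-to-subspaces correspondence is a small but welcome precision the paper leaves implicit.
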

\begin{proof}
    Different orbits $\mathcal{O}(\state_0)$ (cf. Def.~\ref{def:orbit}) generated by $(\R^N,\Phi) \in \Xi^{\textnormal{Vec}}_N$ correspond to $N$-dimensional affine subspaces through $\state_0$:
    \begin{equation}
        \mathcal{O}_{(\R^N,\Phi)}(\state_0) = \{\state_0 + g^i e_i \;|\; g \in \R^N \}\,.
    \end{equation}
    Thus, the orbits $W = \mathcal{O}_{(\R^N,\Phi)}(0)$ correspond to $N$-dimensional subspaces. Further, given any $N$-dimensional subspace $W$ spanned by $\{e_1,\cdots,e_N\}\subseteq W$, there is $(\R^N,\Phi)\in\Xi^{\textnormal{Vec}}_N$ such that $W = \mathcal{O}_{(\R^N,\Phi)}(0)$. Hence, the first infimum in $d_{\Xi^\textnormal{Vec}_N}(S,0)$ 
    is equivalent to an infimum over subspaces $W \subseteq \M$ such that $\dim W = N$, and the second infimum is over elements $v \in W = \mathcal{O}(0)$:
    \begin{align}
        d_{\Xi^\textnormal{Vec}_N}(S,0) 
        & = \inf_{\substack{(G,\Phi) \in \Xi^{\textnormal{Vec}}_N}}\; \sup_{\state \in S}\; \inf_{y \in \mathcal{O}(0)} \textnormal{dist}(\state,y) \\
        & = \inf_{\substack{W \subseteq \M  \\ \dim W = N}}\; \sup_{\state \in S}\; \inf_{y \in W} \|\state - y\| \,.
    \end{align}
\end{proof}

For $\M$ a vector space, including linear, affine or even nonlinear transformations is possible by considering different choices of $\Xi_N$, such as 
\begin{align}
    \Xi^\textnormal{Lin}_N &:= \{(G,\Phi) \; | \;  \dim G = N\,,\; \Phi \textnormal{ linear} \} \,,\\
    \Xi^\textnormal{Aff}_N &:= \{(G,\Phi) \; | \;  \dim G = N\,,\; \Phi \textnormal{ affine} \} \,,\\
    \Xi^\textnormal{Pr}_N &:= \{(G,\Phi) \; | \;  \dim G = N\,,\; \Phi \textnormal{ proper} \} \,.
\end{align}
In particular, linear $\Phi$ are linear representations 
and affine $\Phi$ are affine representations 
such that the $N$-dimensional Lie groups in $\Xi^\textnormal{Lin}_N$ and $\Xi^\textnormal{Aff}_N$ can always be associated with $N$-dimensional subsets of general linear group $GL(n,\R)$, general affine group $\textnormal{Aff}(n,\R)$ respectively, with $n = \dim \M$.
It can also be shown that $\Xi^\textnormal{Pr}_N$ includes $\Xi^{\textnormal{Vec}}_N\,,\,\Xi^\textnormal{Lin}_N\,,\,\Xi^\textnormal{Aff}_N$ as subsets, so it follows that 
\begin{equation}
    d_{\Xi^\textnormal{Pr}_N}(S,0) \leq d_N(S)\,.
\end{equation}
Therefore, group methods can break the Kolmogorov barrier. The set $\Xi^\textnormal{Aff}_N$ includes $\Xi^\textnormal{Lin}_N$ and $\Xi^{\textnormal{Vec}}_N$ as (non-intersecting) subsets, so it also follows that
\begin{align}
    d_{\Xi^\textnormal{Aff}_N}(S,0) \leq d_{\Xi^\textnormal{Lin}_N}(S,0)\,,\\
    d_{\Xi^\textnormal{Aff}_N}(S,0) \leq d_{\Xi^\textnormal{Vec}_N}(S,0)\,.
\end{align}
Yet, affine MOR  only provides a marginal advantage over linear MOR since $\Xi^\textnormal{Lin}_{N+1}$ includes $\textnormal{Aff}_{N}$, such that $d_{\Xi^\textnormal{Lin}_{N+1}}(S,0) \leq d_{\Xi^\textnormal{Aff}_{N}}(S,0)$. Sections~\ref{sec:analytic_examples},~\ref{sec:numeric_examples} will show more significant examples of breaking the Kolmogorov barrier by means of group methods, and Section~\ref{sec:analytic_examples} provides a specific group Kolmogorov $N$-width of linear transport.
\begin{remark}
    It is interesting to restrict $\Xi_N$ to pairs $(G,\Phi)$ such that the $\Phi$ is of a low computational complexity. Possible choices are the aforementioned linear and affine transformations, but also nonlinear actions or flows of differential equations could be parameterized by shallow neural networks to provide interesting classes. 
\end{remark} 
\begin{remark}
    When the action of a pair $(G,\Phi) \in \Xi_N$ is transitive, i.e., $\mathcal{O}(\state_0) = \M$, we likewise get that $d_{\Xi_N}(S,\state_0) = 0$. However, for proper and transitive actions (ignoring improper transitive actions resulting from topologically transitive flows) we have $\dim G > \dim \M$, which makes the case uninteresting for MOR, but leads to the interesting class of Lie group methods for integration of lifted (rather than reduced) dynamics on $G$. 
\end{remark}




The Lipschitz $N$-width~\eqref{eq:Lipschitz-N-width} can also be recovered from~\eqref{eq:group-width}:

\begin{proposition}
    Assume that $\M$ is a Hilbert space with $\textnormal{dist}(x,y) = \|x - y\|$, $T^N = S^1 \times \cdots \times S^1$ is the $N$-torus. Let $D_N:\R^N \rightarrow \M$ denote the decoder function in~\eqref{eq:Lipschitz-N-width}, with constant $\gamma > 0$ and $\|\cdot\|$ from a suitably restricted class of norms on $\M$. Also construct a discontinuous embedding $\textnormal{Emb}:T^N \rightarrow \{x \in \R^N \;|\; \|x\|\leq B\}$ mapping $N$ copies of $S^1$ to $N$ copies of the half-open interval $(-B,B] \in \R$. Define  
    \begin{align}\label{eq:Xi-Lip}
        \Xi^{\textnormal{Lip}}_N := \{&(T^N,\Phi)\;|\; \Phi(g,x) := x+D_N(\textnormal{Emb}(g))\,, \\ 
        & D_N \textnormal{is } \gamma\textnormal{-Lipschitz} \}\,. \nonumber
    \end{align}
    Then we recover the Lipschitz $N$-width~\eqref{eq:Lipschitz-N-width} from~\eqref{eq:group-width}
    \begin{equation}
        d^{\textnormal{Lip}}_N(S) = \inf_{\|\cdot\|_{\M}} d_{\Xi^\textnormal{Lip}_N}(S,0)\,.
    \end{equation}
\end{proposition}
\begin{proof}
    The orbit $\mathcal{O}(0)$ is 
    \begin{equation}
        \mathcal{O}_{(T^N,\Phi)}(0) = D_N(\textnormal{Emb}(T^N)) \subset \M \,.
    \end{equation}
    Hence, the first infimum in $d_{\Xi^\textnormal{Lip}_N}(S,0)$ 
    is 
    \begin{align}
        \inf_{y \in \mathcal{O}(0)}\| x - y\| &= \inf_{g \in T^N}\| x - D_N(\textnormal{Emb}(g))\| \\ &= \inf_{\|\bar{g}\|\leq B}\| x - D_N(\bar{g})\| \,,
    \end{align} 
    where the final step holds since the image of $\textnormal{Emb}$ is dense in the closed ball $\{x\;|\; \|x\| \leq B \}$, and we recover
    \begin{align}
        \inf_{\|\cdot\|} d_{\Xi^\textnormal{Lip}_N}(S,0)
         & = \inf_{\substack{(G,\Phi) \in \Xi^{\textnormal{Lip}}_N \\ \|\cdot\|}}\; \sup_{\state \in S}\; \inf_{y \in \mathcal{O}(0)} \textnormal{dist}(\state,y) \\
         & = \inf_{\substack{D_N, \|\cdot\| \\
         \gamma\textnormal{-Lipschitz}}}\; \sup_{\state \in S} \inf_{\|\bar{g}\| \leq B} \| x - D_N(\bar{g}) \| \,,
    \end{align}
    which is equivalent to~\eqref{eq:Lipschitz-N-width}.
\end{proof}
Here, the Lie group formalism is taken very loosely: the map $\Phi(x,g) = x+D_N(\textnormal{Emb}(g))$ in~\eqref{eq:Xi-Lip} is not explicitly restricted to be a homomorphism (cf. Def.~\ref{def:group_action}), with the result that properties of the orbits (cf. Def.~\ref{thm:properties_of_orbit}) and induced distribution (cf. Def.~\ref{thm:properties_of_induced_distribution}) are no longer guaranteed. The resulting orbits may self-intersect, thus overparameterizing $\M$~--~this problem would be avoided if the homomorphism property can be enforced.

A ROM in MORLie can also be evaluated by means of the Sectional $N$-width~\eqref{eq:sectional-N-width}. 
\begin{proposition}
    Assume that $\M$ is a Hilbert space with $\textnormal{dist}(x,y) = \|x - y\|$. Let $\Phi_{\mu}(g,x)$ be a parameter-dependent action. Define  
    \begin{align}
        \Xi^{\textnormal{Sec}}_N := \{&(\R^N,\Phi_{\mu})\;|\; \Phi_{\mu}(g,x) := x + g^i e_{\mu,i}\,, \\ 
        & \{e_{\mu,1},\cdots,e_{\mu,N}\} \subseteq \M \textnormal{ lin. indep.} \}\,. \nonumber
    \end{align}
    Any such action $\Phi_{\mu}$ induces a linear section 
    \begin{equation}
        \sigma_N(\mu) = \mathcal{O}_{(\R^N, \Phi_\mu)}(x_\mu) \,.
    \end{equation} 
    \begin{equation}
        d^{\textnormal{Sec}}_N(S) = d_{\Xi^\textnormal{Lip}_N}(S,0)\,.
    \end{equation}
\end{proposition}
I.e., a parameterized linear action gives rise to a linear section $\sigma_N(\mu)$ with varying linear fiber. Similarly, nonlinear sections can be induced by general parameterized actions.


\section{Optimization in MORLie}\label{sec:optimization}\label{ssec:MORLie_constructive}
We want to choose $(G,\Phi,\rho_\mu)$ to arrive at ROM dynamics that can be evaluated more efficiently than the FOM dynamics, and enable reconstructed solutions to approximate FOM solutions (cf.~\eqref{eq:MorLie-trajectory-approximation}).
To this end, intrusive and non-intrusive methods for MORLie are presented, also preparing for the examples in Sections~\ref{sec:analytic_examples},~\ref{sec:numeric_examples}. In both cases we cast the process of determining $(G,\Phi,\rho_\mu)$ for a given $(\M,X_\mu,S)$ into the form of an optimization problem. 

Let $\theta \in \R^{n_\theta}$ denote coefficients of a parameterization $\rho:\R^{n_\theta}\times\mathcal{P}\rightarrow C^\infty(\M,\g)$, i.e., $\rho_{\theta,\mu}:\M\rightarrow\g$ describes a subset $\Upsilon\subseteq C^\infty(\M,\g)$ of reduced vector fields. Similar to Section~\ref{ssec:group-N-width}, let $\Xi$ be a to-be-determined set of pairs $(G,\Phi)$, called admissible pairs, and let $J$ be a to-be-determined real-valued cost-function. The general form of the optimization problem will be:
\begin{equation}\label{eq:optimization-problem}
    (G^*,\Phi^*,\theta^*) = \arg\min_{\substack{(G,\Phi)\in\Xi\,, \\ \theta \in \R^{n_\theta}}} J\,. 
\end{equation}
In the following subsections we elaborate on~\eqref{eq:optimization-problem}, treating in more detail choices of $\Xi$ and $\Upsilon$, a choice of $J$ for intrusive and non-intrusive optimization of the ROM. Finally, we will present a strategy for solving~\eqref{eq:optimization-problem} for $(G^*,\Phi^*,\theta^*)$, and further dimensionanility reduction going from $(G^*,\Phi^*,\rho_{\theta,\mu}^*)$ to $(H^*,\Phi^*,\rho_{\theta,\mu}^*)$ with $H^*\subset G^*$. 

\subsection{Search spaces for $(G,\Phi)$} 
We show how a search space $\Xi$ for $(G,\Phi)$ may be determined, for a given manifold $\M$.

A full classification of Lie group actions with particular properties is an open research issue~\cite{Hirsch2011}, with non-trivial dependence on the manifold $\M$.
For example, not every Lie group $G$ has a meaningful action on a given manifold $\M$: there is a free, faithful and proper action of $SO(3)$ on $\R^3$ but not on $T^3$. There is a free action of $SU(2)$ on $S^7$, but not on $S^5$. Also the types of possible actions $\Phi$ depend on $\M$: for example, isometric actions are only defined on Riemannian manifolds, and symplectic actions are only defined on symplectic manifolds. Linear and affine actions can only be defined when $\M$ is a vector-space, in which case actions are referred to as representations, leading to a rich and ongoing field of representation theory~\cite{Hall2015}.
We make no attempt to summarize the vast field of possible pairings of manifolds, Lie groups and actions, and relegate the interested reader to~\cite{Hirsch2011,Hall2015,Knapp1996}. 

The possible search spaces $\Xi$ for $(G,\Phi)$ non-trivially depend on $\M$, since both $G,\Phi$ non-trivially depend on $\M$.

In the following, we make minor remarks on possible choices of $\Xi$, assuming prior information of a Lie group $G$ and action $\Phi:G\times\M\rightarrow\M$ that act on $\M$. Then a set $\Xi$ of admissible Lie group and action pairs can be induced:
\begin{definition}[Induced admissible pairs]\label{def:induced_admissible_pairs}
    Given a manifold $\M$, Lie group $G$ and group action $\Phi:G\times\M\rightarrow\M$, define the set of admissible pairs induced by $(G,\Phi)$:
    \begin{equation}
        \Xi^{G,\Phi} := \{ (H,\Phi) \;|\; H \subseteq G \}\,,
    \end{equation} 
    where $H\subseteq G$ denotes a Lie subgroup. Then induced sets of admissible pairs of a fixed dimension $N$, or with additional properties (cf. Def.~\ref{def:properties_of_action}) can be defined as 
    \begin{align}
        \Xi^{G,\Phi}_N &:= \{ (H,\Phi) \;|\; H \subseteq G\,, \dim H = N \}\,. \\
        \Xi^{G,\Phi}_{\textnormal{Pr}} &:= \{ (H,\Phi) \;|\; H \subseteq G  \textnormal{ \normalfont proper subgroup} \}\,.
    \end{align} 
\end{definition}
Similar definitions allow to preserve any properties of the action, further pruning the set of subgroups and action pairs $\Xi^{G,\Phi}$. 

We consider a few special cases to apply Definition~\ref{def:induced_admissible_pairs}. For example, given $\M = \R^3$, the affine group $Aff(3)$ (see Sec.~\ref{ssec:rigid-pointcloud}) and its action on $\R^3$ induce $\Xi^{G,\Phi}$ containing also $SE(n)$, $SO(n)$, $(\R^n,+)$ (for $n \leq 3$) as proper subgroups.  

For arbitrary manifolds $\M$, the diffeomorphism group $\Diff(\M)$ of smooth invertible maps $\varphi:\M\rightarrow\M$ (a topological Lie group) induces a versatile search space $\Xi^{G,\Phi}$, whose subgroups were described in terms of neural nets parameterizing vector fields, in~\cite{Kleikamp2022}.

Homogenous manifolds $\M$ are a class of manifolds for which a transitive action $\Phi:G\times\M\rightarrow\M$ is already known. Then the induced distribution $\Delta = T\M$ (cf. Def.~\ref{def:induced_distribution}) already contains the FOM dynamics $X\in \Gamma(T\M)$, and it is guaranteed that $\Xi^{G,\Phi}$ contains a group and action pair such that the ROM exactly describes the FOM. We will reuse this fact in Section~\ref{ssec:subalgebra-search}.

\subsection{Search space for reduced vector fields}
We consider two cases for parameterizing $\rho_{\theta,\mu}:\M\rightarrow\g$, corresponding to the search space $\Upsilon \subseteq C^\infty(\M,\g)$. 

First, it is possible to directly parameterize the map $\rho_{\theta,\mu}:\M\rightarrow\g$ in local charts $(U_i, \Coord_i)$ with $U_i \subseteq \M$ and $\Coord_i:U_i\rightarrow\R^n$. To this end, let there be a finite collection of charts $(U_i,\Coord_i)$ that cover $\M$ and have smooth transition maps, and given a partition of unity $\sigma_i:\M\rightarrow\R$ with respect to the charts, i.e., functions such that $\sigma_i(\state) > 0$ for $\state \in U_i$, $\sigma_i(\state) = 0$ for $x \notin U_i$ and $\sum_i \sigma_i(\state) = 1$ for all $\state \in \M$. Further, let $\Lambda:\R^{\dim \g}\rightarrow \g$ be an invertible, linear map implementing a basis of the Lie algebra. Then $\rho_{\theta,\mu}$ can be parameterized in terms of component functions $f^i_\theta:\R^{\dim \M}\times \R^{\dim \mathcal{P}} \rightarrow \R^{\dim \g}$ as 
\begin{equation}
    \rho_{\theta,\mu}(\state) = \sum_i \Lambda\big((\sigma_i f^i_\theta)(\Coord_i^{-1}(\state),\mu)\big)\,.
\end{equation}
Component functions $f^i_\theta$ can be implemented e.g., by autoencoders, series expansions, or other parameterizations. However, direct parameterization of $\rho_{\theta,\mu}$ will require explicit computation of $\appstate = \Phi(g_\mu(t),\state_{\mu,0})$ at every time-instance, possibly incuring a large computational overhead by working with the FOM state directly when the ROM dynamics are solved. 

As an alternative, we parameterize $\rho_{\theta,\mu}$ such that the reduced-order state $g_\mu$ (rather than the full-order state $\appstate$) can be used to evaluate the ROM dynamics. Removing reference to the full-order state reduces computational overhead, and is typically referred to as hyperreduction. 
Let there be a finite collection of charts $(\widetilde{U}_i,\widetilde{\Coord}_i)$ that cover $G$ and have smooth transition maps, and given a partition of unity $\widetilde{\sigma}_i:G\rightarrow\R$ with respect to the charts. Then $\rho_{\theta,\mu}(\Phi(g_\mu,\state_{\mu,0}))$ can be parameterized in terms of component functions $f^i_\theta:\R^{\dim G}\times \R^{\dim \mathcal{P}} \rightarrow \R^{\dim \g}$ as
\begin{equation}
    \rho_{\theta,\mu}(\Phi(g_\mu,\state_{\mu,0})) = \sum_i \Lambda\big((\widetilde{\sigma}_i \widetilde{f}^i_\theta)(\widetilde{\Coord}_i^{-1}(g_\mu),\mu)\big)\,.
\end{equation}
The parameterizations are otherwise equivalent in terms of approximation-power: no information is lost by not considering $\appstate$ directly, since all relevant complexity w.r.t.\ different initial conditions $\state_{\mu,0}$ is implicit in the fixed parameter $\mu$, and the full-order state is fully determined by $g_\mu, \state_{\mu,0}$. The component functions $\widetilde{f}^i_\theta$ can be implemented similarly to $f^i_\theta$. For high-dimensional parameters $\mu$ it may also be interesting to further reduce them to their most important features, e.g., by use of autoencoders.

\subsection{Intrusive and non-intrusive MORLie}
For a first intrusive cost-function, we consider a smooth Riemannian manifold $\M$ with metric-induced norm $\| \cdot\|$ on local tangent-spaces $T_\state\M$, and metric-induced volume form. With a finite parameter set $\mathcal{P}_f\subset \mathcal{P}$ we define
\begin{align}\label{eq:cost-intrusive-integral}
    J&(G,\Phi,\rho_{\theta,\mu},X_\mu) \\
    & = \sum_{\mu_i\in\mathcal{P}_f}\int_\M \|X_{\mu_i}(\state) - X_{\rho_{\theta,\mu_i}(\state)}(\state) \| \,. \nonumber
\end{align}
In~\eqref{eq:cost-intrusive-integral}, it is the explicit dependence on $X_{\mu_i}$ that marks the cost-function as intrusive.
\begin{theorem}\label{thm:explicit-rho-intrusive}
    For fixed $G,\Phi$ the optimization problem~\eqref{eq:optimization-problem} with cost~\eqref{eq:cost-intrusive-integral} has an explicit solution $\rho^*_{\mu} \in C^\infty(\M,\g)$ given by 
    \begin{equation}\label{eq:solution-rho-cost-intrusive}
        \rho^*_{\mu}(\state) = {X_\g}^{\dagger} \Pi_\Delta X_{\mu_i}(\state)\,,
    \end{equation}
    where $\Pi_\Delta:T\M\rightarrow\Delta$ is the metric-projection onto the induced distribution $\Delta$ (cf. Def.~\ref{def:induced_distribution}) and ${X_\g}^{\dagger}:\Delta\rightarrow\g$ inverts the infinitesimal generator ($\forall{\widetilde{A}\in\g}:{X_\g}^{\dagger}X_{\widetilde{A}} = \widetilde{A}$)\footnote{The operator ${X_\g}^{\dagger} \Pi_\Delta$ is a coordinate-free version of a Moore-Penrose pseudo-inverse of $X_\g$, i.e., corresponding to algebraic inversion.}. Further, if $X_\mu$ is smooth, then so is $\rho^*_\mu$.
\end{theorem}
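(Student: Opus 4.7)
The cost functional in~\eqref{eq:cost-intrusive-integral} is a sum over parameters $\mu_i$ and a pointwise integral over $\M$ of a non-negative integrand. Since $G,\Phi$ (and therefore the infinitesimal generator assignment $\widetilde{A}\mapsto X_{\widetilde{A}}$) are held fixed, minimization decouples: it suffices, for each fixed $\mu_i$ and each $\state\in\M$ independently, to minimize the pointwise integrand over the value $\rho_{\theta,\mu_i}(\state)\in\g$. Provided the search space $\Upsilon$ is rich enough to contain the resulting pointwise-optimal assignment (e.g.\ $\Upsilon=C^\infty(\M,\g)$), the global minimizer of $J$ is obtained by assembling these pointwise minimizers into a single map.

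\textbf{Step 1 (pointwise linear least-squares).} For fixed $\state\in\M$, introduce the linear map $A_\state:\g\rightarrow T_\state\M$, $A_\state(\widetilde{A}):=X_{\widetilde{A}}(\state)$. By Def.~\ref{def:induced_distribution} the image of $A_\state$ is exactly $\Delta_\state\subseteq T_\state\M$. Hence the pointwise problem
\begin{equation}
\min_{r\in\g}\;\|X_{\mu_i}(\state)-A_\state(r)\|_{T_\state\M}
\end{equation}
is a standard linear least-squares problem in the Hilbert space $T_\state\M$ with search set $\Delta_\state$.

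\textbf{Step 2 (closed form).} By the orthogonal projection theorem in Hilbert space, the unique best approximation of $X_{\mu_i}(\state)$ in the closed subspace $\Delta_\state$ is $\Pi_\Delta X_{\mu_i}(\state)$. The set of $r\in\g$ achieving this value is $A_\state^{-1}\{\Pi_\Delta X_{\mu_i}(\state)\}$, an affine subspace parallel to $\ker A_\state$. The coordinate-free Moore-Penrose pseudo-inverse ${X_\g}^{\dagger}$ singles out the representative orthogonal to $\ker A_\state$ (and reduces to the genuine inverse when $\Phi$ is free, since then $A_\state$ is injective by Thm.~\ref{thm:properties_of_induced_distribution}). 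This yields the formula $\rho^*_{\mu_i}(\state)={X_\g}^{\dagger}\Pi_\Delta X_{\mu_i}(\state)$.

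\textbf{Step 3 (smoothness) and main obstacle.} Smoothness of $\rho^*_\mu$ follows by composing three smooth maps: $\state\mapsto X_\mu(\state)$ (smooth by hypothesis), the fiberwise metric projection $\Pi_\Delta$ onto the distribution, and the fiberwise pseudo-inverse ${X_\g}^{\dagger}$. The delicate point, and the main obstacle, is justifying smoothness of the last two operators: this requires the rank of $A_\state$ to be locally constant, since the Moore-Penrose pseudo-inverse of a smoothly varying linear map is smooth precisely on loci of constant rank, and the orthogonal projection onto a distribution is smooth when the distribution is a smooth subbundle. When $\Phi$ is free, $\Delta$ is a regular distribution of dimension $\dim G$ (Thm.~\ref{thm:properties_of_induced_distribution}) and both properties hold globally; in general the statement should be read on the open subset of $\M$ where $\mathrm{rank}(A_\state)$ is locally constant (in particular on the open dense stratum of maximal rank), which is the standard setting in which the pseudo-inverse is smooth.
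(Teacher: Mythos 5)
Your proposal is correct and follows essentially the same route as the paper's own proof: the cost is minimized by minimizing the non-negative integrand pointwise in $\state$ and $\mu_i$, the pointwise problem is solved by orthogonal projection onto $\Delta_\state$ followed by the pseudo-inverse of the generator map, and smoothness follows by composition. You are in fact more careful than the paper on the last point --- the paper simply asserts that $\Pi_\Delta$ and ${X_\g}^{\dagger}$ are smooth, whereas your observation that this requires locally constant rank of $\widetilde{A}\mapsto X_{\widetilde{A}}(\state)$ (automatic for free actions by Theorem~\ref{thm:properties_of_induced_distribution}) is a genuine and worthwhile refinement.
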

\begin{proof}
    The proposed explicit solution~\eqref{eq:solution-rho-cost-intrusive} minimizes the integrand $\|X_{\mu_i}(\state) - X_{\rho_{\theta,\mu_i}(\state)}(\state) \|$ for all $\state\in\M,\mu_i \in \mathcal{P}$. Thus, also~\eqref{eq:cost-intrusive-integral} is minimized. Finally, $\rho^*_\mu$ is the composition of smooth maps $X_\g^\dagger$ and $\Pi_\Delta$, and the vector field $X_{\mu}$ which is smooth by assumption, so $\rho^*_\mu$ is also smooth.
\end{proof}
The result of Theorem~\ref{thm:explicit-rho-intrusive} extends to $\theta$ dependent $\rho_{\theta,\mu}$ if there are $\theta^*$ such that $\rho_{\theta^*,\mu}= \rho_\mu^*$. 

For a discrete implementation consider a finite set of state-parameter snapshots $S$ collecting discrete trajectories $\state_{i,k} := \state_{\mu_i}(t_{i,k})$ and parameters $\mu_i$:
\begin{equation}\label{eq:snapshots-velocity-based}
    S:= \{\big(\state_{i,k}, \mu_i\big) \;|\; k \in \mathcal{I}_k, i \in \mathcal{I}_i \}\,.
\end{equation}
We define
\begin{align}\label{eq:cost-non-intrusive-velocity-based}
    J&(G,\Phi,\rho_{\theta,\mu},S,X_\mu) \\
    &= \sum_{(\state_{i,k}, \mu_i)\in S} \|X_{\mu_i}(\state_{i,k}) - X_{\rho_{\theta,\mu_i}(\state_{i,k})}(\state_{i,k}) \|\,. \nonumber
\end{align}
Contrary to~\eqref{eq:cost-intrusive-integral}, the cost~\eqref{eq:cost-non-intrusive-velocity-based} can also be implemented as a non-intrusive cost-function, by replacing $X_{\mu_i}(\state_{i,k})$ with velocity-measurements. 

\begin{theorem}\label{thm:explicit-rho-non-intrusive-velocity-based}
    For fixed $(G,\Phi)$, the optimization problem~\eqref{eq:optimization-problem} with cost~\eqref{eq:cost-non-intrusive-velocity-based} has the non-unique solution $\rho^*_{\mu} \in C^\infty(\M,\g)$ given by~\eqref{eq:solution-rho-cost-intrusive}.   
\end{theorem}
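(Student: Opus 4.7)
The plan is to mirror the pointwise-minimization argument from Theorem~\ref{thm:explicit-rho-intrusive}, but exploit that~\eqref{eq:cost-non-intrusive-velocity-based} is a finite sum rather than an integral, which gives both the claimed minimality and the claimed non-uniqueness.

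First, I would observe that for each fixed sample $(\state_{i,k},\mu_i)\in S$, the summand $\|X_{\mu_i}(\state_{i,k})-X_{\rho_{\mu_i}(\state_{i,k})}(\state_{i,k})\|$ depends on $\rho_{\mu_i}$ only through its value $\rho_{\mu_i}(\state_{i,k})\in\g$. Hence each summand can be independently minimized over the single variable $\rho_{\mu_i}(\state_{i,k})\in\g$, and any $\rho^*_{\mu}$ that simultaneously minimizes every summand minimizes the total cost, since the cost is a sum of non-negative terms.

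Next, I would invoke the pointwise-minimization argument already used in the proof of Theorem~\ref{thm:explicit-rho-intrusive}: the image of the infinitesimal generator map $\widetilde A\mapsto X_{\widetilde A}(\state)$ is exactly $\Delta_{\state}$, so the $\g$-valued quantity achieving $\min_{\widetilde A\in\g}\|X_{\mu_i}(\state)-X_{\widetilde A}(\state)\|$ is the one whose image under $X_\g$ is the metric projection $\Pi_\Delta X_{\mu_i}(\state)$, i.e.\ $\widetilde A=X_\g^{\dagger}\Pi_\Delta X_{\mu_i}(\state)$. Applied at every sample point, this shows that the choice $\rho^*_\mu(\state):=X_\g^{\dagger}\Pi_\Delta X_\mu(\state)$ from~\eqref{eq:solution-rho-cost-intrusive} attains the minimum of each summand and therefore of~\eqref{eq:cost-non-intrusive-velocity-based}. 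Smoothness of $\rho^*_\mu$ (needed so that $\rho^*_\mu\in C^\infty(\M,\g)$) is inherited verbatim from Theorem~\ref{thm:explicit-rho-intrusive}, as the composition of the smooth maps $X_\g^{\dagger}$, $\Pi_\Delta$ and $X_\mu$.

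Finally, I would justify non-uniqueness by noting that the cost~\eqref{eq:cost-non-intrusive-velocity-based} only probes $\rho^*_\mu$ at the finite set $\{\state_{i,k}\}\subset\M$. Thus any smooth map that agrees with $X_\g^{\dagger}\Pi_\Delta X_\mu$ at these finitely many points also minimizes the cost; existence of such alternative smooth extensions (obtained e.g.\ by adding a smooth $\g$-valued bump function vanishing at every $\state_{i,k}$) yields a continuum of minimizers. There is no real obstacle here: the entire argument is a discrete analogue of the integral case, and the only subtlety worth flagging explicitly is that non-uniqueness comes precisely from the discreteness of $S$, which is absent in the integral setting of Theorem~\ref{thm:explicit-rho-intrusive}.
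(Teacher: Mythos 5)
Your proposal is correct and follows essentially the same route as the paper's proof: pointwise minimization of each summand by the explicit formula~\eqref{eq:solution-rho-cost-intrusive} (inheriting the projection argument and smoothness from Theorem~\ref{thm:explicit-rho-intrusive}), combined with the observation that the cost does not constrain $\rho_\mu$ away from the finitely many sample points, which gives non-uniqueness. Your version merely spells out the same two steps in more detail.
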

\begin{proof}
    The proposed explicit solution~\eqref{eq:solution-rho-cost-intrusive} minimizes the summand $\|X_{\mu_i}(\state_{i,k}) - X_{\rho_{\theta,\mu_i}(\state_{i,k})}(\state_{i,k}) \|$ for all $\mu_i, \state_{i,k}$, and thus also the sum. However, values of $\rho_\mu(\state)$ for $(\state,\mu) \notin S$ do not influence the value of the cost~\eqref{eq:cost-non-intrusive-velocity-based}, so they are not uniquely determined by the optimization problem.
\end{proof}
The result of Theorem~\ref{thm:explicit-rho-non-intrusive-velocity-based} extends to $\theta$-dependent $\rho_{\theta,\mu}$ if there are $\theta^*$ such that $\rho_{\theta^*,\mu}= \rho_\mu^*$. Such a parametrization is generally easier to find than for Theorem~\ref{thm:explicit-rho-intrusive}, since the equality only has to hold at a finite number of points.

For a velocity-free non-intrusive cost-function we consider a smooth space-time manifold $\M\times\R$ with spatial metric $\textnormal{dist}:\M\times\M\rightarrow\R_+$, and finite set of state-time-parameter snapshots $S$ collecting discrete trajectories $\state_{i,k} := \state_{\mu_i}(t_{i,k})$, time-instances $t_{i,k}$ and parameters $\mu_i$:
\begin{equation}\label{eq:snapshots-velocity-free}
    S:= \{\big(\state_{i,k}, t_{i,k}, \mu_i\big) \;|\; k \in \mathcal{I}_k, i \in \mathcal{I}_i \}\,.
\end{equation}
We define
\begin{align}\label{eq:cost-non-intrusive-velocity-free}
    J&(G,\Phi,\rho_{\theta,\mu},S) \\
     &= \sum_{(\state_{i,k}, t_{i,k}, \mu_i)\in S} \textnormal{dist}\big(\state_{i,k+1},\Phi(e^{\rho_{\theta,\mu_i}(\state_{i,k})\Delta t_{i,k}},\state_{i,k})\big)\,. \nonumber
\end{align}
We note that the cost~\eqref{eq:cost-non-intrusive-velocity-free} corresponds to a discrete-time flow-matching, which is strongly related to the worst case orbit approximation~\eqref{eq:group-width-over-tau} for small $\tau = \Delta t_{i,k}$. The velocity based cost~\eqref{eq:cost-non-intrusive-velocity-based} is an instantaneous projection error, and may be seen as a limiting case of~\eqref{eq:group-width-over-tau} as $\lim_{\tau\rightarrow 0}$. 
\begin{theorem}\label{thm:explicit-rho-non-intrusive-velocity-free}
    If $\rho^*_{\theta,\mu}$ in~\eqref{eq:cost-intrusive-integral} satisfies 
    \begin{align}\label{eq:solution-rho-cost-non-intrusive-velocity-free}
        \rho^*_{\mu_i}&(\state_{i,k}) \\ &= \arg\min_{\widetilde{A}\in\g} \textnormal{dist}\big(\state_{i,k+1},\Phi(e^{\widetilde{A}\Delta t_{i,k}},\state_{i,k})\big)\,, \nonumber
    \end{align}
    for all $(\state_{i,k},t_{i,k}, \mu_i)\in S$ then it is a non-unique solution to the optimization problem~\eqref{eq:optimization-problem} with cost~\eqref{eq:cost-intrusive-integral}, for fixed $G,\Phi$.
\end{theorem}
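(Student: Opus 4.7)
The plan is to mirror the proof of Theorem~\ref{thm:explicit-rho-non-intrusive-velocity-based}, exploiting the fact that the cost~\eqref{eq:cost-non-intrusive-velocity-free} is a finite sum in which each summand depends on $\rho_{\theta,\mu}$ only through its pointwise value at a single snapshot $(\state_{i,k}, \mu_i) \in S$. Because each summand is a distance and hence non-negative, a candidate $\rho^*$ that simultaneously minimizes each summand is automatically a global minimizer of the sum.

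First, I would observe the decoupling: for distinct indices $(i,k) \ne (i',k')$, the associated summands involve evaluations of $\rho$ at distinct pairs $(\state_{i,k},\mu_i)$ and $(\state_{i',k'},\mu_{i'})$. Since $\rho$ ranges over a function space, its values at different points can in principle be chosen independently. Thus
\[
    \inf_{\rho} \, J(G,\Phi,\rho,S) \;\geq\; \sum_{(\state_{i,k}, t_{i,k}, \mu_i) \in S} \inf_{\widetilde{A} \in \g} \textnormal{dist}\bigl(\state_{i,k+1}, \Phi(e^{\widetilde{A}\Delta t_{i,k}}, \state_{i,k})\bigr),
\]
and equality is attained whenever a choice of $\rho$ realizes every pointwise $\arg\min$. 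By hypothesis~\eqref{eq:solution-rho-cost-non-intrusive-velocity-free}, the proposed $\rho^*_{\mu_i}(\state_{i,k})$ is precisely such a pointwise minimizer, so $\rho^*$ attains the global lower bound and is a minimizer.

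Second, for non-uniqueness I would point out that the cost depends only on the finite collection of values $\{\rho_{\mu_i}(\state_{i,k})\}_{(i,k)}$. Hence any other $\rho'$ that agrees with $\rho^*$ on $S$ (but is otherwise arbitrary as a smooth map $\M \to \g$, for fixed $\mu$) yields the same value of $J$, and in particular is also a minimizer. This explicitly characterizes the degeneracy of the problem off the snapshot set.

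The main subtlety, rather than a hard obstacle, is ensuring the existence of a smooth representative $\rho^* \in C^\infty(\M,\g)$ realizing the prescribed pointwise values. Since $S$ is a finite set of pairs in $\M \times \mathcal{P}$ and $\g$ is a vector space, a standard partition-of-unity/bump-function construction on $\M$ (together with smoothness in $\mu$ handled analogously) produces a smooth interpolant of any assignment $(\state_{i,k},\mu_i) \mapsto \widetilde{A}_{i,k} \in \g$. Hence the infimum is attained inside the admissible class, and the parametrized version extends to any $\rho_{\theta,\mu}$ that is expressive enough to realize these pointwise values at $\theta = \theta^*$, just as in Theorems~\ref{thm:explicit-rho-intrusive} and~\ref{thm:explicit-rho-non-intrusive-velocity-based}.
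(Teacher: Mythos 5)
Your proposal is correct and follows essentially the same route as the paper, which simply argues that the pointwise minimizer of each summand minimizes the finite sum and that values of $\rho$ off the snapshot set are unconstrained, hence the non-uniqueness. Your additional observations (the lower bound via the sum of pointwise infima, and the existence of a smooth interpolant realizing the prescribed values) make the argument more explicit than the paper's one-line ``analogous'' proof, but do not change its substance.
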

\begin{proof}
    Analogous to the proof of Theorem~\ref{thm:explicit-rho-non-intrusive-velocity-based}.
\end{proof}
We separately optimize over $\Xi$ and $\theta$, in the following section. 

\subsection{Optimization Strategy}
We investigate a decoupled optimization strategy, in which a first optimization identifies $(G^*,\Phi^*)\in \Xi$, and a second optimization identifies the minimizer $\rho^*_{\theta,\mu}$ of~\eqref{eq:cost-intrusive-integral},~\eqref{eq:cost-non-intrusive-velocity-based} or~\eqref{eq:cost-non-intrusive-velocity-free} for fixed $(G^*,\Phi^*)$. The following theorem states that if the solution trajectories already lie on a group orbit, then the optimal reduced vector field recovers the FOM exactly.

\begin{theorem}[Decoupled Optimization]\label{thm:decoupled_optimization}
    With $S$ the set of solution snapshots~\eqref{eq:solution-snapshots}. If there exist $(G^*,\Phi^*)$ such that either 
    \begin{enumerate}
        \item $\state_\mu(t) \in \mathcal{O}(\state_{\mu,0})$ for all $\state_{\mu,0} \in S$.
        \item $X_\mu \in \Gamma(\Delta)$.
    \end{enumerate} 
    Then there is $\rho^*_{\mu}:\M\rightarrow\g$ such that $(G^*,\Phi^*,\rho^*_\mu)$ minimizes $J$ in Eqs.~\eqref{eq:cost-intrusive-integral},~\eqref{eq:cost-non-intrusive-velocity-based} and~\eqref{eq:cost-non-intrusive-velocity-free}.
\end{theorem}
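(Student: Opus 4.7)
The plan is to reduce the theorem to the explicit-minimizer formulas already established in Theorems~\ref{thm:explicit-rho-intrusive}, \ref{thm:explicit-rho-non-intrusive-velocity-based} and~\ref{thm:explicit-rho-non-intrusive-velocity-free}, by arguing that each of the three cost functions attains its global minimum value of zero for an appropriately chosen $\rho^*_\mu$. First, I would observe that condition (2) implies condition (1): by Theorem~\ref{thm:properties_of_induced_distribution} the induced distribution $\Delta$ is integrable with its integral leaves through $\state_{\mu,0}$ being exactly the orbit $\mathcal{O}(\state_{\mu,0})$, so if $X_\mu\in\Gamma(\Delta)$ then every solution of $\dot{\state}_\mu = X_\mu(\state_\mu)$ starting at $\state_{\mu,0}$ stays inside $\mathcal{O}(\state_{\mu,0})$. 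Thus condition (1) covers both branches of the hypothesis when analyzing the snapshot-based costs, while condition (2) is needed in its stronger form only for the fully intrusive integral cost.

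For the intrusive cost~\eqref{eq:cost-intrusive-integral}, Theorem~\ref{thm:explicit-rho-intrusive} gives the minimizer $\rho^*_\mu = X_\g^\dagger \Pi_\Delta X_\mu$. Under condition (2), $X_\mu(\state) \in \Delta_\state$ pointwise, so $\Pi_\Delta X_\mu = X_\mu$ and the integrand vanishes everywhere, yielding $J = 0$, which is the global minimum since $J\geq 0$. For the velocity-based cost~\eqref{eq:cost-non-intrusive-velocity-based}, Theorem~\ref{thm:explicit-rho-non-intrusive-velocity-based} shows the same $\rho^*_\mu$ is a minimizer; under condition (1), for each snapshot we have $X_{\mu_i}(\state_{i,k}) = \dot{\state}_{\mu_i}(t_{i,k}) \in T_{\state_{i,k}}\mathcal{O}(\state_{\mu_i,0}) = \Delta_{\state_{i,k}}$, again by the tangency statement in Theorem~\ref{thm:properties_of_induced_distribution}, so each summand vanishes and $J=0$.

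For the velocity-free cost~\eqref{eq:cost-non-intrusive-velocity-free}, Theorem~\ref{thm:explicit-rho-non-intrusive-velocity-free} reduces the problem to producing, at each snapshot, an element $\widetilde{A}_{i,k}\in\g$ with $\Phi(e^{\widetilde{A}_{i,k}\Delta t_{i,k}},\state_{i,k}) = \state_{i,k+1}$. Under condition (1) the consecutive snapshots $\state_{i,k}$ and $\state_{i,k+1}$ both lie in the orbit $\mathcal{O}(\state_{\mu_i,0})$ and hence in a common orbit, so there exists $h_{i,k}\in G$ with $\Phi(h_{i,k},\state_{i,k}) = \state_{i,k+1}$; taking $\widetilde{A}_{i,k} = \log(h_{i,k})/\Delta t_{i,k}$ (for a local branch of $\log$) and defining $\rho^*_{\mu_i}(\state_{i,k}) = \widetilde{A}_{i,k}$ then drives every summand to zero. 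The main obstacle is precisely this last step: $\exp:\g\to G$ is only guaranteed to be surjective under additional structure on $G$ (e.g.\ compact connected, nilpotent, or abelian), so in general one must either assume the sampling is dense enough that every $h_{i,k}$ lies in the open neighborhood of $e$ on which $\exp$ is a diffeomorphism, or weaken the conclusion so that the minimum is merely approached rather than attained. Once this caveat is addressed, the explicit $\rho^*_\mu$ constructions above give the decoupling claimed.
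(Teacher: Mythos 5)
Your proposal follows essentially the same route as the paper: both reduce the claim to the explicit minimizers of Theorems~\ref{thm:explicit-rho-intrusive}, \ref{thm:explicit-rho-non-intrusive-velocity-based} and~\ref{thm:explicit-rho-non-intrusive-velocity-free}, using $\rho^*_\mu = X_\g^\dagger \Pi_\Delta X_\mu$ under assumption (2) and $\rho^*_{\mu_i}(\state_{i,k}) = \log(g_{i,k})/\Delta t_{i,k}$ under assumption (1). Your added caveat about the surjectivity of $\exp:\g\rightarrow G$ is a legitimate point that the paper's own proof silently assumes when writing $\log(g_{i,k})$, so your treatment is, if anything, slightly more careful than the original.
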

\begin{proof}
    If there is $(G^*,\Phi^*)$ such that $\state_\mu(t) \in \mathcal{O}(\state_{\mu,0})$ for all $\state_{\mu,0} \in S$, then it holds by definition for any $\state_1, \state_2 \in \mathcal{O}(\state_{\mu,0})$, that there exists $g \in G$ such that 
    \begin{equation}
        \state_2 = \Phi(g,\state_1)\,.
    \end{equation}
    With reference to Theorem~\ref{thm:explicit-rho-non-intrusive-velocity-free}, we then construct the explicit minimizer $\rho^*_{\mu_i}(\state_{i,k}) = \log(g_{i,k} )/ \Delta t_{i,k}$ of~\eqref{eq:cost-non-intrusive-velocity-free}. Instead, using that $X_\mu \in \Gamma(\Delta)$ and with reference to Theorem~\ref{thm:explicit-rho-intrusive}, the explicit solution can be constructed as $\rho_\mu^*(\state) = X^\dagger_\g X_\mu(\state)$. This yields the solution to~\eqref{eq:cost-intrusive-integral} and~\eqref{eq:cost-non-intrusive-velocity-based}.
\end{proof}
The assumptions of Theorem~\ref{thm:decoupled_optimization} directly encode that the FOM dynamics can be expressed as dynamics on $G^*$. The assumptions are equivalent~--~however, Assumption 1 can be checked more easily for discrete velocity-free measurements, and Assumption 2 can be checked more easily when the FOM dynamics are known.

If we can identify $(G^*,\Phi^*)\in \Xi$ for which either of the assumptions hold, the optimal $\rho^*_{\theta,\mu}$ is explicitly determined by Theorems~\ref{thm:explicit-rho-intrusive},\ref{thm:explicit-rho-non-intrusive-velocity-based} and~\ref{thm:explicit-rho-non-intrusive-velocity-free}. To this end we present a concrete algorithm for optimization in Section~\ref{sec:numeric_examples}, where we perform a discrete search over select $\Xi$.

\subsection{Subalgebra search}\label{ssec:subalgebra-search}
So far we did not discuss the dimension of the group $G^*$, which may be high-dimensional. Here, we present a means to further reduce the dimension of the ROM, assuming that a first optimizer $(G^*,\Phi^*,\rho_\mu^*)$ was found. 

We investigate the induced admissible pairs $\Xi^{G^*,\Phi^*}$ (cf. Def.~\ref{def:induced_admissible_pairs}), which form a search space for yet lower dimensional approximations on subgroups $H^*\subseteq G^*$. To this end, we present a theorem:
\begin{theorem}[Subalgebra search]\label{thm:subalgebra-search}
    Given $\M$ and $(G^*,\Phi^*,\rho_\mu^*)$ that minimizes the optimization problem~\eqref{eq:optimization-problem} with cost~\eqref{eq:cost-intrusive-integral},~\eqref{eq:cost-non-intrusive-velocity-based} or~\eqref{eq:cost-non-intrusive-velocity-free}, respectively. If $\rho_{\mu}^*(\state) \in \mathfrak{h} \subseteq \g$ is fully contained in the Lie subalgebra $\mathfrak{h} \subseteq \g$ corresponding to the Lie subgroup $H^* \subseteq G^*$, then $(H^*,\Phi^*,\rho_\mu^*)$ also solves~\eqref{eq:optimization-problem} for the respective cost. 
\end{theorem}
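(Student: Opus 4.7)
The approach rests on a single observation: the infinitesimal generator $X_{\widetilde{A}}(\state)$ of an element $\widetilde{A}\in\g$ depends on the pair $(G^*,\Phi^*)$ only through the curve $t\mapsto \Phi^*(e^{\widetilde{A}t},\state)$ at $t=0$ (cf.\ Definition~\ref{def:infinitesimal_generator}). When $\widetilde{A}\in\mathfrak{h}$, the exponential $e^{\widetilde{A}t}$ stays in $H^*$ for all $t$ by definition of a Lie subgroup, so the relevant piece of $\Phi^*$ is already its restriction $\Phi^*|_{H^*\times\M}$. Under the hypothesis $\rho_\mu^*(\state)\in\mathfrak{h}$ for every $\state\in\M$, the reduced vector fields induced by $\rho_\mu^*$ via the action of $G^*$ and via the restricted action of $H^*$ therefore coincide pointwise on $\M$.

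Given this, the proof proceeds in three steps. First, I would verify that $(H^*,\Phi^*|_{H^*\times\M})$ is an admissible pair: because $H^*$ is a Lie subgroup of $G^*$ and $\Phi^*$ is smooth and satisfies the (anti-)homomorphism property~\eqref{eq:group_homomorphism}, the restriction $\Phi^*|_{H^*\times\M}$ is itself a smooth group action of $H^*$ on $\M$, and hence lies in $\Xi^{G^*,\Phi^*}$ in the sense of Definition~\ref{def:induced_admissible_pairs}. Second, reinterpreting $\rho_\mu^*$ as a map $\M\to\mathfrak{h}$, I would note that each of the three costs~\eqref{eq:cost-intrusive-integral},~\eqref{eq:cost-non-intrusive-velocity-based} and~\eqref{eq:cost-non-intrusive-velocity-free} depends on $(G,\Phi,\rho_\mu)$ only through either the generator $X_{\rho_\mu(\state)}(\state)$ or through evaluations of the form $\Phi(e^{\rho_\mu(\state)\Delta t},\state)$. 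By the observation above, both quantities are unchanged when we replace $(G^*,\Phi^*)$ by $(H^*,\Phi^*|_{H^*\times\M})$, so
\begin{equation*}
    J(G^*,\Phi^*,\rho_\mu^*,\cdot) \;=\; J(H^*,\Phi^*|_{H^*\times\M},\rho_\mu^*,\cdot)\,.
\end{equation*}

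Third, I would close the argument: since $(G^*,\Phi^*,\rho_\mu^*)$ minimizes $J$ over the admissible search space by hypothesis, the common value above equals the optimal value of~\eqref{eq:optimization-problem}; hence $(H^*,\Phi^*|_{H^*\times\M},\rho_\mu^*)$ attains this minimum as well and is therefore a solution. The main difficulty here is not analytic but notational: one must be explicit that the search space is taken to include $(H^*,\Phi^*|_{H^*\times\M})$ (natural under Definition~\ref{def:induced_admissible_pairs}) and must retype $\rho_\mu^*$ as $\mathfrak{h}$-valued, which is legitimate precisely because $\mathfrak{h}$ is a linear subspace of $\g$ closed under the Lie bracket. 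No new smoothness or regularity estimates are needed, since all structural properties of $\rho_\mu^*$ survive the restriction.
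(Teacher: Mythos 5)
The paper states Theorem~\ref{thm:subalgebra-search} without providing a proof, so there is nothing to compare against; your argument supplies the missing justification and it is correct. The central observation is exactly the right one: for $\widetilde{A}\in\mathfrak{h}$ the exponential of $H^*$ agrees with that of $G^*$, so both the infinitesimal generator $X_{\widetilde{A}}(\state)$ and the increment $\Phi^*(e^{\widetilde{A}\Delta t},\state)$ are determined by $\Phi^*|_{H^*\times\M}$ alone; since each of the three costs depends on $(G,\Phi,\rho_\mu)$ only through these two quantities, the cost value is unchanged under restriction and minimality is inherited. The one caveat you correctly flag is that the conclusion presupposes $(H^*,\Phi^*|_{H^*\times\M})$ lies in the admissible search space $\Xi$ — this is automatic for the induced sets $\Xi^{G^*,\Phi^*}$ of Definition~\ref{def:induced_admissible_pairs}, which is evidently the setting the authors intend (it is the one used in Section~\ref{ssec:subalgebra-search}), but for a fixed-dimension search space such as $\Xi^{G,\Phi}_N$ the restricted pair would fall outside $\Xi$ and the statement would need the enlarged search space to make sense. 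Your proof is complete as written.
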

By Theorem~\ref{thm:subalgebra-search}, 
the problem of finding a subgroup $H^*\subset G^*$ reduces to finding a subalgebra $\mathfrak{h}^*\subset\g^*$ that contains $\rho_\mu^*$. 

In a non-intrusive context we make a further definition: 
\begin{definition}\label{def:reduced-snapshot-matrix}
    Given $(G^*,\Phi^*,\rho^*_\mu)$ that solves~\eqref{eq:cost-non-intrusive-velocity-based} or~\eqref{eq:cost-non-intrusive-velocity-free}, and given a set of snapshots $S$ as in~\eqref{eq:snapshots-velocity-based} or~\eqref{eq:snapshots-velocity-free}, respectively. Then we define the reduced snapshot matrix $S_\g$ as
    \begin{equation}\label{eq:reduced-snapshot-matrix}
        S_\g := \{\rho^*_\mu(\state_\mu) \;|\; (\state_\mu,\mu) \in S\}\,.
    \end{equation}
\end{definition} 
It holds that 
\begin{equation}\label{eq:property-reduced-snapshot-matrix}
    \rho_{\mu}^*(\state) \in \mathfrak{h} \Leftrightarrow S_\g \subseteq \mathfrak{h}\,.
\end{equation}
Theorem~\ref{thm:subalgebra-search} and~\eqref{eq:property-reduced-snapshot-matrix} allow us to identify a lower dimensional Lie group $H^* \subseteq G^*$, but this requires that $S_\g \subseteq \mathfrak{h}$ holds for some subalgebra $\mathfrak{h} \subseteq \g$.

In practice, however, it rarely holds that $S_\g$ is exactly contained in some subalgebra of $\g$. We therefore relax the condition and approximate the dominant Lie algebra directions via PCA. Concretely, we introduce an inner product on $\g$, and aim to identify a subalgebra $\mathfrak{h}\subseteq \g$ by principal component analysis (PCA) of $S_\g$ and subsequent completion of a Lie algebra $\mathfrak{h}$ by repeated application of the Lie bracket, such that ``most'' of $S_\g$ is contained within $\mathfrak{h}$. 

To this end, we will explore select special cases in Section~\ref{sec:numeric_examples}. 



\section{Analytic Examples}\label{sec:analytic_examples}

Sections~\ref{ssec:linear-transport} and~\ref{ssec:method_of_freezing} highlight that MORLie is applicable to distributed systems, although the presented theory in the main article focused on the finite dimensional case. 

\subsection{Kolmogorov $\Xi$ width of linear transport}\label{ssec:linear-transport}
Consider the linear transport equation for a scalar-valued function $u \in C^\infty(\R)$, and scalar $\mu_1,\mu_2 \in \R$:
\begin{equation}\label{eq:linear-transport}
    \der[u]{t} + \mu_1\der[u]{x} = 0\,,\; u(x,0) = u_{\mu_2,0}(x) = \sin(\mu_2 x) \,.
\end{equation}
The solutions of~\eqref{eq:linear-transport} are $u_\mu(x,t) = u_{\mu_2,0}(x-\mu_1 t)$, providing a well understood toy-example for MOR that challenges linear subspace methods. Further consider a set of solution snapshots
\begin{equation}\label{eq:snapshots-linear-transport}
    S = \{ u_\mu(x,t) \;|\; t \in [0,T],\, (\mu_1,\mu_2) \in \mathcal{P} \subseteq \R^2\}\,.
\end{equation}
The Kolmogorov $N$-width of $S$ is well-known (cf. Sec.~\ref{ssec:Kolmogorov-N-width}) to decay slowly. Instead, consider the following result:
\begin{theorem}
    Let $S$ be as in~\eqref{eq:snapshots-linear-transport}. Further consider the Lie group $(\R,+)$ with action $\Phi:\R\times C^\infty(\R)\rightarrow C^\infty(\R)$ given by $\Phi\big(g,u\big)(x) = f(x+g)$, and let $\Xi = \{(\R,\Phi)\}$. Then the Kolmogorov $\Xi$-width of $S$ over the initial conditions $u_{\mu_2,0}$ is 
    \begin{equation}
        d^T_\Xi(S) = 0\,.
    \end{equation}
    irrespective of the choice of metric on $C^\infty(\R)$.
\end{theorem}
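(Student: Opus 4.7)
The plan is to unwind the definitions of $d_\Xi$ and $d^T_\Xi$ and show that every snapshot in $S$ lies exactly on the orbit $\mathcal{O}(u_{\mu_2,0})$ generated by the translation action $\Phi$, so that the innermost infimum over $y \in \mathcal{O}(u_{\mu_2,0})$ vanishes by choosing $y$ equal to the snapshot itself. Since $\Xi$ contains only the single pair $(\R,\Phi)$, the outer infimum over admissible pairs is trivial and plays no role.

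First I would write the explicit solution to the linear transport equation, namely $u_\mu(x,t) = u_{\mu_2,0}(x - \mu_1 t)$, and then verify directly from the defining formula $\Phi(g,u)(x) = u(x+g)$ (reading the $f$ in the statement as the function $u$ being acted upon) that
\begin{equation}
u_\mu(\cdot,t) = \Phi(-\mu_1 t, u_{\mu_2,0})\,.
\end{equation}
This identity immediately shows $u_\mu(\cdot,t) \in \mathcal{O}(u_{\mu_2,0})$ for every $t \in [0,T]$ and every $\mu \in \mathcal{P}$. Hence for any metric $\textnormal{dist}$ on $C^\infty(\R)$,
\begin{equation}
\inf_{y \in \mathcal{O}(u_{\mu_2,0})} \textnormal{dist}\bigl(u_\mu(\cdot,t),\, y\bigr) \leq \textnormal{dist}\bigl(u_\mu(\cdot,t),\, u_\mu(\cdot,t)\bigr) = 0\,.
\end{equation}

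Next I would plug this into the definition~\eqref{eq:group-width} to conclude $d_\Xi(u_\mu([0,T]), u_{\mu_2,0}) = 0$ for every $\mu \in \mathcal{P}$, and then into~\eqref{eq:group-width-over-IC} so that the supremum over $\mu$ of a quantity that is identically zero remains zero. This yields $d^T_\Xi(S) = 0$ independently of the metric, since the argument only uses that $\textnormal{dist}(v,v) = 0$.

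I do not anticipate a real obstacle here; the only subtle point is a notational one, namely matching the textual definition $\Phi(g,u)(x) = f(x+g)$ (where presumably $f = u$) with the transport solution, and being precise that a single-element $\Xi$ still makes~\eqref{eq:group-width} well-defined as an infimum. If desired I would also note the complementary observation that the linear Kolmogorov $N$-width of $S$ decays only like $N^{-1/2}$ for this example, so the theorem provides a concrete and extreme instance of breaking the Kolmogorov barrier with a one-dimensional Lie group.
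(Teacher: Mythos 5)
Your proposal is correct and follows essentially the same route as the paper's proof: both identify the choice $g = -\mu_1 t$ so that every snapshot $u_\mu(\cdot,t)$ lies exactly on the orbit $\mathcal{O}(u_{\mu_2,0})$, making the innermost infimum zero for any metric, after which the remaining suprema and the trivial infimum over the singleton $\Xi$ yield $d^T_\Xi(S)=0$. Your reading of the typo $f(x+g)$ as $u(x+g)$ matches the paper's intent.
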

\begin{proof}
    We compute
    \begin{equation}
        d^T_{\Xi}(S) := \sup_{\mu_1,\mu_2 \in \R} d_{\Xi}\big(u_{\mu}(\cdot,[0,T]),u_{\mu_2,0}(\cdot)\big)\,.
    \end{equation}
    Expand $d_{\Xi}\big(u_{\mu}(\cdot,[0,T]),u_{\mu_2,0}(\cdot)\big)$ to find
    \begin{align}
        &d_{\Xi}\big(u_{\mu}(\cdot,[0,T]),u_{\mu_2,0}(\cdot)\big) \\
        &=\inf_{\substack{(G,\Phi) \in \Xi}}\; \sup_{u(\cdot,t) \in u_{\mu}(\cdot,[0,T])}\; \inf_{v(\cdot) \in \mathcal{O}(u_{0,\mu_2}(\cdot))} \textnormal{dist}(u,v)\nonumber\\ 
        &= \sup_{t\in[0,T]}\; \inf_{g\in \R} \textnormal{dist}(u_{\mu_2,0}(x-\mu_1 t),u_{\mu_2,0}(x+g))\nonumber\\
        &= \sup_{t\in[0,T]} \textnormal{dist}(u_{\mu_2,0}(x-\mu_1 t),u_{\mu_2,0}(x-\mu_1 t)) \nonumber\\
        &= 0\,.\nonumber
    \end{align}
    Where the third equality holds for $g = -\mu_1 t$, and the fourth equality holds since $\textnormal{dist}(u,u) = 0$ holds for every metric. Since no specific $\mu$ was assumed, also $d^T_{\Xi}(S) = 0$.
\end{proof}
Thus, the Kolmogorov $\Xi$-width of $S$ over the initial conditions is identically zero for a computationally tractable group action. This provides an example where $d^T_{\Xi_N^{\textnormal{Prop}}}(S) < d_N(S)$. 

For completeness, a family of ROMs on the Lie group $\R$ can be identified by Theorem~\eqref{thm:explicit-rho-intrusive} with constant $\rho_\mu(u) = -\mu_1$, resulting in
\begin{equation}
    \dot{g}_\mu = {R_g}_*\rho_\mu = -\mu_1\,,\; g_\mu(0) = 0\,,    
\end{equation}
and reconstructed solutions
\begin{align}
    \bar{u}_\mu(x,t) &= \Phi(g_\mu(t),u_{\mu_2,0}(x)) \\ 
    &= u_{\mu_2,0}(x + g_\mu(t))  \nonumber \\
    &= u_{\mu_2,0}(x - \mu_1 t)  \nonumber \\
    & = u_\mu(x,t) \,. \nonumber
\end{align}

\subsection{The method of freezing in MORLie}\label{ssec:method_of_freezing}
We describe the method of freezing~\cite{Ohlberger2013} in MORLie, recovering it in terms of a choice $(\widetilde{G},\widetilde{\Phi},\widetilde{\rho})$. 

We begin by describing the method of freezing as presented in~\cite{Ohlberger2013}.
Consider the function space $U =  H^\infty(\R^n,\R)$, an elliptic operator $\mathcal{L}_{\mu}:U\rightarrow U$, and take $u_\mu \in U$ subject to the following nonlinear Cauchy problem 
\begin{equation}\label{eq:mof-Nonlinear_Cauchy_Problem}
    \dot{u}_{\mu} + \mathcal{L}_{\mu}(u_{\mu}) = 0 \,, \; u_{\mu}(0) = u_0\,.
\end{equation}  
Let there be an action $\Phi:G\times U \rightarrow U$, such that the operator $\mathcal{L}_{\mu}$ is equivariant w.r.t. $\Phi$:
\begin{equation}\label{eq:mof_equivariance}
    g_\mu^{-1}\cdot \mathcal{L}_{\mu} \big(g_\mu\cdot v_\mu\big) = \mathcal{L}_{\mu}(v_\mu)\,,
\end{equation}
and expand $u_\mu \in U$ as $g_\mu \in G$ acting on $v_\mu \in U$:
\begin{equation}\label{eq:mof-general-action}
    u_\mu(t) = \Phi\big(g_\mu(t), v_\mu(t)\big) =: g_\mu(t) \cdot v_\mu(t) \,.
\end{equation}  
Then the PDE~\eqref{eq:mof-Nonlinear_Cauchy_Problem} can be rewritten as a partial differential algebratic equation (PDAE)~\eqref{eq:mof-horizontal-dynamics}~-~\eqref{eq:mof-phase-condition}:
\begin{align}\label{eq:mof-horizontal-dynamics} 
    \dot{v}_\mu &= - \mathcal{L}^G_{\mu,\widetilde{A}} \big(v_\mu\big) \,, \\
    \dot{g}_\mu &= g_\mu \widetilde{A}\,, \label{eq:mof-vertical-dynamics} \\ 
    \Psi&\big(\widetilde{A},\dot{v}_\mu\big) =  0\,, \label{eq:mof-phase-condition} 
\end{align} 
where $\mathcal{L}^G_{\mu,\widetilde{A}} \big(v_\mu\big) := \mathcal{L}_{\mu} \big(v_\mu\big) + X_{\widetilde{A}} (v_\mu)$, and $\Psi\big(\widetilde{A},\dot{v}_\mu\big)$ 
is an invertible phase condition that uniquely determines $\widetilde{A}(t)\in\g$.

The method of freezing proceeds as follows:  high-fidelity solution snapshots of $v_\mu(t)$ are computed, a POD-greedy approach is used to determine a reduced basis 
\begin{equation}\label{eq:mof-reduced-basis}
    \bar{U} := \textnormal{span}(\{v_1,\cdots,v_N\}) \subseteq U\,,
\end{equation}
and $v_\mu$ is approximated as 
\begin{equation}
    v_\mu(t) \approx \sum_{i=1}^N c_\mu^i(t) v_i \,.
\end{equation} 
Next, the (already discretized, high-dimensional) dynamics and phase-condition are projected to this reduced basis, using empirical operator interpolation~\cite{Drohmann2012} to derive $\mathbb{L}^G_{\mu,\widetilde{A}}:\R^N\rightarrow\R^N$ and $\mathbb{F}:\R^N\rightarrow\g$ that result in a ROM of the form: 
\begin{align}
    \dot{c}_\mu &= - \mathbb{L}^G_{\mu,\widetilde{A}}(c_\mu(t))\,,\; c_\mu(0) = c_{\mu,0}\,, \label{eq:mof-ROM-dynamics-c}\\
    \dot{g}_\mu &= g_\mu \widetilde{A}\ \,,\; g_\mu(0) =e \label{eq:mof-ROM-dynamics-g}\\
    \widetilde{A}(t) &= \mathbb{F}(c_\mu(t))\,.
\end{align} 
Finally, solutions are reconstructed as:
\begin{align}\label{eq:mof-reconstructed-solution}
    u_\mu(t) &= g_\mu(t)\cdot v_\mu(t) \\
    & \approx \sum_{i=1}^N c_\mu^i(t)\big(g_\mu(t)\cdot v_i) \,. \nonumber
\end{align}

To describe the method of freezing in MORLie, the goal will be find $(\widetilde{G},\widetilde{\Phi},\widetilde{\rho})$ such that the reconstructed solution $\bar{u}_\mu(t) = \Phi(g_\mu(t),u_{\mu,0})$ agrees with that in~\eqref{eq:mof-reconstructed-solution}. For completeness, we also mention that the full order manifold is $\M := U =  H^\infty(\R^n,\R)$ and the full order dynamics are $X_\mu(u) := -\mathcal{L}_\mu(u)$.

We begin by identifying $(\widetilde{G},\widetilde{\Phi})$. First, note that the full solutions to the method of freezing lie in the set $\mathcal{O}(\R^k,G) \subseteq H^\infty(\R^n,\R)$ that we define as 
\begin{equation}\label{eq:mof-reachable-subset}
    \mathcal{O}(\R^k,G) := \bigg\{ \sum_{i=1}^N c^i\big(g\cdot v_i)\,|\, c \in \R^k, g \in G \bigg\} \,.
\end{equation}  

\begin{lemma}[Group \& Action]\label{lemma:mof-group-and-action-MoRLie}
    Let $\widetilde{G} = G \times \R^k$ be the product of the Abelian Lie group $(\R^k,+)$ and the Lie group $G$, with elements denoted by $(c_\mu,g_u) \in \widetilde{G}$.
    Further, define the augmented manifold $\M_G = G\times\M$ (cf. Lemma~\ref{lemma:commuting_actions_by_augmentation}) and actions $\widetilde{\Phi}_1:G \times \M_G \rightarrow \M_G$, $\widetilde{\Phi}_2: \R^k \times \M_G \rightarrow \M_G$ as 
    \begin{align}
        \widetilde{\Phi}_1\big(g,(h,u)\big) &= (hg, g\cdot u) \,, \\
        \widetilde{\Phi}_2\big(c,(h,u)\big) &= \bigg(h, u + \sum_{i=1}^N c^i (h\cdot v_i)\bigg)\,.
    \end{align}  
    Then
    \begin{enumerate}
        \item The map $\widetilde{\Phi}: \widetilde{G} \times \M_G \rightarrow \M_G$ given by
                \begin{equation}\label{eq:mof-action-MORLie}
                    \widetilde{\Phi}\big( (g,c), (h,u)\big) = \bigg(hg, g\cdot u +\sum_{i=1}^N c^i (h\cdot g\cdot v_i)\bigg)\,.
                \end{equation}
                is an action of $\widetilde{G}$ on $\M_G$.
        \item With $\pi_2:\M_G\rightarrow\M$ the projection on the second factor, and $\mathcal{O}(\R^k,G)$ as in~\eqref{eq:mof-reachable-subset}, the orbit $\mathcal{O}\big((e,0)\big)$ under $\widetilde{\Phi}$ is such that
                \begin{equation}\label{eq:mof-orbit-equivalence}
                    \pi_2 \mathcal{O}\big((e,0)\big) = \mathcal{O}(\R^k,G) \,.
                \end{equation}  
    \end{enumerate} 
     
\end{lemma}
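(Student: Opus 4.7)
The plan is to verify the two claims in sequence. The natural factorization $\widetilde{\Phi}((g,c),\cdot) = \widetilde{\Phi}_2(c,\cdot)\circ\widetilde{\Phi}_1(g,\cdot)$ is immediate by substitution, so claim (i) reduces to showing that (a) $\widetilde{\Phi}_1$ and $\widetilde{\Phi}_2$ are each group actions of $G$ and $\R^k$ on $\M_G$ respectively, and (b) these two actions commute on the nose, so that their combination extends to a well-defined action of the direct product $\widetilde{G} = G\times\R^k$ via the augmentation construction of Lemma~\ref{lemma:commuting_actions_by_augmentation}.

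For (a), the map $\widetilde{\Phi}_1$ acts on the first component of $\M_G = G\times\M$ by the translation $h\mapsto hg$ and on the second component by the original action of $G$ on $\M$; compatibility with the group multiplication on $G$ then follows from the axioms for $\Phi$ together with the associativity of group multiplication. The map $\widetilde{\Phi}_2$ is a single translation in the $\M$-factor indexed by $c\in\R^k$, and since $\R^k$ is abelian this is a one-line check. For (b), a direct computation gives
\begin{equation}
    \widetilde{\Phi}_1\bigl(g,\widetilde{\Phi}_2(c,(h,u))\bigr) = \Bigl(hg,\; g\cdot u + \sum_i c^i\, g\cdot(h\cdot v_i)\Bigr),
\end{equation}
\begin{equation}
    \widetilde{\Phi}_2\bigl(c,\widetilde{\Phi}_1(g,(h,u))\bigr) = \Bigl(hg,\; g\cdot u + \sum_i c^i\,(hg\cdot v_i)\Bigr),
\end{equation}
and these agree because the action of $G$ on the vector space $U$ in the method-of-freezing setup is linear (so it distributes over $\sum_i c^i(\cdot)$) and because the action axiom gives $g\cdot(h\cdot v_i) = hg\cdot v_i$ under the appropriate left/right convention. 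The identity element $(e,0)$ acts trivially directly from the formula, completing (i).

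For (ii), plugging $(h,u) = (e,0)$ into~\eqref{eq:mof-action-MORLie} yields
\begin{equation}
    \widetilde{\Phi}\bigl((g,c),(e,0)\bigr) = \Bigl(g,\; \sum_{i=1}^N c^i\,(g\cdot v_i)\Bigr),
\end{equation}
since $eg=g$, $g\cdot 0 = 0$, and $e\cdot g\cdot v_i = g\cdot v_i$. Projecting onto the second factor via $\pi_2$ and letting $(g,c)$ range over $G\times\R^k$ reproduces the set $\mathcal{O}(\R^k,G)$ from~\eqref{eq:mof-reachable-subset}, so both containments $\subseteq$ and $\supseteq$ are immediate. The main obstacle is step (b) of part (i): the commutation of $\widetilde{\Phi}_1$ and $\widetilde{\Phi}_2$ depends on linearity of the $G$-action on $U$ passing through the finite sum together with pinning down the correct left/right convention for $\Phi$ so that $hg\cdot v_i$ matches $g\cdot(h\cdot v_i)$; once these are settled, part (ii) reduces to a short symbolic calculation.
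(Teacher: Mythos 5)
Your proof is correct and follows the same overall strategy as the paper's: decompose $\widetilde{\Phi}$ into the two factor actions $\widetilde{\Phi}_1,\widetilde{\Phi}_2$, establish that they commute, and conclude that their composition is an action of the direct product; part (ii) is the same one-line substitution of $(h,u)=(e,0)$ in both. The difference is that the paper outsources the key steps to its general machinery — it identifies $\widetilde{\Phi}_1,\widetilde{\Phi}_2$ as the augmented actions of $\Phi_1(g,u)=g\cdot u$ and $\Phi_2(c,u)=u+\sum_i c^i v_i$ produced by Lemma~\ref{lemma:commuting_actions_by_augmentation}, and then invokes the product-action result (Theorem~\ref{thm:product_reduction}) — whereas you verify commutativity by direct computation. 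Your inline computation is worth keeping because it exposes two hypotheses that the paper's appeal to the general lemma leaves implicit: first, the $G$-action on $U$ must be linear (or at least additive) for $\widetilde{\Phi}_2$ as written to coincide with the conjugated action $h\cdot\Phi_2(c,h^{-1}\cdot u)$ of Lemma~\ref{lemma:commuting_actions_by_augmentation} and for the action to distribute over the finite sum; second, the handedness of the multiplication in the first factor of $\M_G$ must match the handedness of $\Phi$. On the second point your caution is warranted: Lemma~\ref{lemma:commuting_actions_by_augmentation} uses $g_2g_1$ (left translation) together with a left action, while the present lemma writes $hg$, and with a left action $\Phi$ the two compositions then differ by $(gh)\cdot v_i$ versus $(hg)\cdot v_i$ — so commutativity as you compute it requires either a right action or replacing $hg$ by $gh$. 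This is a convention mismatch in the statement rather than a flaw in your argument, and it does not affect part (ii) since $eg=ge=g$.
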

\begin{proof}
    First, consider the actions $\Phi_1:G\times\M \rightarrow\M$, $\Phi_2:\R^k\times\M \rightarrow\M$ given by 
    \begin{align}
        \Phi_1(g,u) &= g\cdot u\,,\\
        \Phi_2(c,u) &= u + \sum_{i=1}^N c^i v_i \,.
    \end{align}
    Following Lemma~\ref{lemma:commuting_actions_by_augmentation}, commuting actions $\widetilde{\Phi}_1:G \times \M_G \rightarrow \M_G$, $\widetilde{\Phi}_2: \R^k \times \M_G \rightarrow \M_G$ are constructed, and their product $\widetilde{\Phi}: \widetilde{G} \times \M_G \rightarrow \M_G$ is guaranteed to be an action.
    Second, the orbit $\mathcal{O}\big((h,v)\big)$ is 
    \begin{equation}
        \mathcal{O}\big((h,v)\big) := \big\{ \Phi\big((g,c),(h,v)\big) \,|\,  (g,c) \in \widetilde{G} \big\} \,.
    \end{equation}
    For $(g,\gamma_0) = (e,0)$ this becomes
    \begin{equation}
        \mathcal{O}\big((e,0)\big) = \bigg\{ (g, \sum_{i=1}^N c^i (g\cdot v_i)\big) \,|\,  (g,c) \in \widetilde{G} \bigg\}\,,
    \end{equation}
    and the result~\eqref{eq:mof-orbit-equivalence} follows by inspection.
\end{proof}
\begin{remark}
    An alternate construction of an action of $\widetilde{G}$ on the augmented manifold $M_{\R^k}$ is also possible (cf. Lemma~\ref{lemma:commuting_actions_by_augmentation}) and similarly satisfies $\pi_2 \mathcal{O}\big((e,0)\big) = \mathcal{O}(\R^k,G)$, but is not further investigated.
\end{remark}
We are now ready to describe the method of freezing in MORLie:

\begin{theorem}[Method of freezing in MORLie]\label{thm:mof-morlie}
Given the Lie group $\widetilde{G} = G \times \R^k$, and action $\widetilde{\Phi}: \widetilde{G} \times \M_G \rightarrow \M_G$ as in~\eqref{eq:mof-action-MORLie}. 
Denote by $P:U\rightarrow\R^k$ the projection onto components $c^i$ of the basis $\{v_1,\cdots,v_n\}$ spanning $\bar{U}\subseteq U$ in~\eqref{eq:mof-reduced-basis}. 
Define $\rho:U_G \rightarrow \widetilde{\g}$ in terms of components $\rho_1:U_G \rightarrow \R^k$ and $\rho_2:U_G \rightarrow \g$:
\begin{align}
    \rho_1\big( g,u ) & = - \mathbb{L}^G_{\mu,\mathbb{F}(c)}(P(g^{-1}\cdot u))\\
    \rho_2\big( g,u ) & = \mathbb{F}(P(g^{-1}\cdot u)) \,.
\end{align}
Further identify $(g,u) \in \mathcal{O}\big((e,0)\big)$ with $(g,c) = (g, P(g^{-1}\cdot u))\in \widetilde{G}$. Then the MORLie ROM on $\widetilde{G}$ starting at $(g_0,c_0) \in \mathcal{O}\big((e,0)\big)$ reads, with $\bar{c}(t) = c_0 + c(t)$:
\begin{align} 
        \dot{c} &= - \mathbb{L}^G_{\mu,\mathbb{F}(\bar{c})}\big(\bar{c}(t)\big)\,,\; c(0) = 0\,, \\
        \dot{h} &= h \mathbb{F}\big(\bar{c}(t)\big)\,,\; h(0) = e\,.
\end{align} 
Let $(g_0,u_0) = \big( e, \, \sum_{i=1}^N c_0^i(v_i )\big)$, then the reconstructed solution starting at $(g_0,u_0)$ is
\begin{equation}\label{eq:thm:mof-morlie}
    \widetilde{\Phi}\big((h(t),c(t)),(e,u_0)\big) = \bigg(h(t), \, \sum_{i=1}^N \bar{c}^i(t)(h(t)\cdot v_i)\bigg) \,.
\end{equation}
This solution agrees with~\eqref{eq:mof-reconstructed-solution} for $c_\mu(0) = c_0$ in~\eqref{eq:mof-ROM-dynamics-c} and $g_\mu(0) = e$ in~\eqref{eq:mof-ROM-dynamics-g}, in which case $u_\mu(t) =\pi_2 \widetilde{\Phi}\big((h(t),c(t)),(e,u_0)\big)$ is the solution in the method of freezing.
\end{theorem}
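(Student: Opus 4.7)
The plan is to apply Theorem~\ref{thm:MorLie reconstruction} to the product Lie group $\widetilde{G} = G \times \R^k$ with action $\widetilde{\Phi}$ from Lemma~\ref{lemma:mof-group-and-action-MoRLie}, and then verify by direct substitution that the reconstructed MorLie trajectory agrees with the method-of-freezing reconstruction~\eqref{eq:mof-reconstructed-solution}.

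First I would exploit that the Lie algebra $\widetilde{\g} = \g \times \R^k$ splits as a direct sum, so the translation pushforward in~\eqref{eq:MorLie-group-dynamics} decouples componentwise. On the Abelian factor $\R^k$ the pushforward is trivial, giving $\dot{c} = \rho_1(\appstate)$. On the $G$-factor, the action $\widetilde{\Phi}_1(g,(h,u)) = (hg, g\cdot u)$ enters $h$ by right multiplication, so by the right-action variant of Theorem~\ref{thm:MorLie reconstruction} one uses \emph{left} translation in~\eqref{eq:MorLie-group-dynamics}, yielding $\dot h = (L_h)_* \rho_2(\appstate) = h\,\rho_2(\appstate)$.

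Next I would evaluate $\rho$ along the trajectory. With initial condition $(h(0),c(0)) = (e,0)$ and $u_0 = \sum_i c_0^i v_i$, expanding~\eqref{eq:mof-action-MORLie} and using linearity of the $G$-action in the second argument produces $\appstate(t) = \bigl(h(t),\, \sum_i \bar c^i(t)(h(t)\cdot v_i)\bigr)$ with $\bar c(t) = c_0 + c(t)$, which is already~\eqref{eq:thm:mof-morlie}. Applying $h(t)^{-1}$ to the second component and then the projection $P$ gives $P\bigl(h(t)^{-1}\cdot u(t)\bigr) = \bar c(t)$, so the defining formulas for $\rho_1,\rho_2$ collapse to $\rho_1 = -\mathbb{L}^G_{\mu,\mathbb{F}(\bar c)}(\bar c)$ and $\rho_2 = \mathbb{F}(\bar c)$. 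Substituting back into the factorized ROM yields exactly the two ODEs stated in the theorem.

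Finally, I would compare with~\eqref{eq:mof-ROM-dynamics-c}--\eqref{eq:mof-ROM-dynamics-g}: setting $g_\mu(t) := h(t)$ and $c_\mu(t) := \bar c(t)$ matches the method-of-freezing ODEs and initial data ($g_\mu(0)=e$ and $c_\mu(0) = c_0 = \bar c(0)$), and substituting into~\eqref{eq:mof-reconstructed-solution} reproduces $\pi_2\appstate(t)$ term by term. The main subtlety I anticipate is the bookkeeping between left- and right-translation on the two factors of $\widetilde{G}$, together with checking that the trajectory remains inside $\mathcal{O}\bigl((e,0)\bigr)$ so that the identification $(g,u)\mapsto (g,P(g^{-1}\cdot u))$ is well-posed throughout; this latter point is automatic since by construction $\appstate(t) \in \mathcal{O}(\appstate(0))$ under the MorLie reconstruction.
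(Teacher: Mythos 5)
Your proposal is correct and follows the same overall strategy as the paper's proof: identify the MORLie ROM dynamics and initial data with the method-of-freezing ODEs \eqref{eq:mof-ROM-dynamics-c}--\eqref{eq:mof-ROM-dynamics-g} and conclude agreement of the reconstructions by uniqueness of solutions. The paper's own proof is a one-liner that simply asserts $\dot h = \dot g_\mu$ and $\dot c = \dot c_\mu$; you additionally supply the derivation it leaves implicit (applying Theorem~\ref{thm:MorLie reconstruction} factorwise on $\widetilde{G} = G\times\R^k$, handling the left/right-translation convention, and evaluating $\rho$ via $P(h^{-1}\cdot u)=\bar c$), all of which is consistent with the stated result.
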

\begin{proof}
    The solutions~\eqref{eq:mof-reconstructed-solution} and~\eqref{eq:thm:mof-morlie} are equal if their initial conditions are equal and their dynamics are equal. This follows from uniqueness of solutions for Lipschitz dynamics. The initial conditions are $u_0$ in both cases. The dynamics are equal since $\dot{h} = \dot{g}_\mu$ and $\dot{c} = \dot{c}_\mu$.  
\end{proof}

\begin{remark}
    Here, the key-insight is that fixed basis elements can be realized as actions of $\R^k$, and moving basis elements by the action of a product Lie group $G\times\R^k$. At this point, the formalism becomes similar to that of transformation-based methods~\cite{Cagniart2019,Cagniart2020_thesis,Black2020,Schulze2023} and the shifted POD~\cite{Reiss2018,Krah2025}, for which MORLie provides a geometric picture that generalizes to manifolds.
\end{remark}

\section{Numerical Examples}\label{sec:numeric_examples}
Here we provide basic and advanced numerical examples of the Theory in Sections~\ref{sec:morlie} and~\ref{sec:optimization}. 
For a summary see Table~\ref{tab:num_example_summary}. The code is available at \href{https://github.com/YPWotte/MORLie}{\texttt{github.com/YPWotte/MORLie}}.
\begin{table}[H]
    \centering
    \begin{tabularx}{\columnwidth}{bss}
        Name & Section & Variables \\ 
        \hline
        Radial Oscillator & \ref{ssec:radial_oscillator} & $\rho_\mu$ \\
        Rigid pointcloud & \ref{ssec:rigid-pointcloud} & $G, \rho_\mu$ \\ 
        Sheering pointclouds & \ref{ssec:sheering-pointclouds} & $G,\Phi,\rho_\mu$ \\ 
        Liver respiration & \ref{ssec:liver} & $\rho_\mu$
    \end{tabularx}
    \caption{Summary of degrees of freedom for optimization in numerical examples Sec.~\ref{ssec:radial_oscillator} to~\ref{ssec:liver}.\label{tab:num_example_summary}}
\end{table}

 \subsection{Radial oscillations}\label{ssec:radial_oscillator}

We begin with a basic example highlighting the idea of projecting a vector field to a distribution induced by a known Lie group.
We consider a Riemannian manifold $\R^2$ equipped with the standard Euclidean metric. With constants $a \in \R$, $b \in \mathbb{N}$, $a,b \gg 1$, and parameter $\mu \in \R$, we define a dynamical system describing radial oscillations in polar coordinates $x_1 = q_1\cos(q_2), x_2 = q_1 \sin(q_2)$:
\begin{equation} 
    \begin{cases} 
        \dot{q}_1 = \frac{q_1}{a}\sin(b q_2)\,, \\ 
        \dot{q_2} = \mu \,.
    \end{cases}
\end{equation}
We rewrite this as the FOM vector field
\begin{equation}
    X_\mu(q_1,q_2) = \frac{q_1}{a}\sin(bq_2) \der[]{q_1} + \mu\der[]{q_2}\,.
\end{equation}
We further construct an action on $\R^2$ by the 1-dimensional Lie group $G = SO(2)$, expressed in polar coordinates as 
\begin{equation}
    \Phi\big(\alpha,(q_1,q_2)) = (q_1,q_2+\alpha)\,,
\end{equation}
where the corresponding infinitesimal generator is
\begin{equation}
    X_{\dot{\alpha}}(q_1,q_2) = \dot{\alpha}\der[]{q_2} \,,
\end{equation}
which spans a distribution $\Delta = \{a(q_1,q_2)\der[]{q_2}\;|\; a \in C^\infty(\R^2)  \}$.
%
Finally, the Euclidean metric in polar coordinates reads $M = \ext q_1 \otimes \ext q_1 + q_1^2 \ext q_2 \otimes \ext q_2$, and the explicit intrusive solution in Theorem~\ref{thm:explicit-rho-intrusive} becomes 
\begin{equation}
    \rho_\mu^*(q_1,q_2) = X^\dagger\Pi_\Delta X_\mu(q_1,q_2) =  \big(0,\mu\big)\,,
\end{equation}
This determines the reduced dynamics $\appX_\mu \in \Gamma(\Delta)$ as
\begin{equation}
    \appX_\mu(q_1,q_2) = X_{\rho_\mu^*(q_1,q_2)}(q_1,q_2) =  \mu\der[]{q_2}\,.
\end{equation}
By an application of Theorem~\ref{thm:MorLie reconstruction}, this reduced vector field can be integrated on $SO(2)$ by solving the system
\begin{align}
    \dot{\alpha} &= \rho_\mu^*\big(\Phi(\alpha, (q_{1,0},q_{2,0}))\big) = \mu \,,\; \alpha(0) = 0\,. 
\end{align}
The resulting approximate solutions are 
\begin{equation}
    \big(\bar{q_1}(t), \bar{q_2}(t)\big) = \Phi\big(\alpha(t),(q_{1,0},q_{2,0})\big) =  (q_{1,0},q_{2,0}+\mu t)\,,
\end{equation}
which 
are compared to FOM trajectories in Figure~\ref{fig:radial_oscillator}.
\begin{figure}[H]
    \centering
    \def\svgwidth{0.9\columnwidth}\scriptsize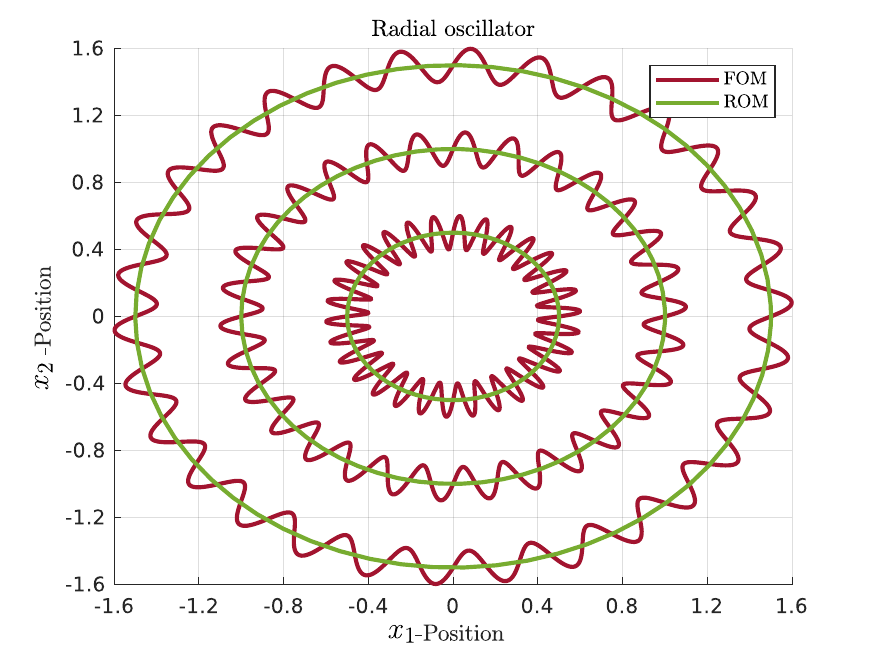
    \caption{Example~\ref{ssec:radial_oscillator}, MOR of the radial oscillator, showing trajectories of the full order model (FOM) and the reduced order model (ROM).}\label{fig:radial_oscillator}
\end{figure}

\subsection{Rigid pointcloud}\label{ssec:rigid-pointcloud}
The following example features optimization over a set of admissible groups, and compares MORLie with a naive POD method. 
We will recover a rigid body motion 
from noisy measurements of a rigidly moving point cloud. 
%

We consider $H(t) \in SE(3)$ with dynamics 
\begin{equation}\label{eq:rigid-group-dynamics}
    \dot{H}(t) = H(t)\widetilde{T}(t)\,,\; H(0) = H_0\,.
\end{equation}
for a time-dependent $\widetilde{T}(t) \in se(3)$. 

With $j \in \{0,\cdots,N_j\}$ indexing trajectories, $i \in \{0,\cdots,N_i\}$ indexing particles in a given trajectory, $k \in \{0,\cdots,N_k\}$ indexing time-instances $t_k \in [0,T]$ with $t_0 = 0$, we denote the $i$-th particle on the $j$-th trajectory at time $t_k$ by $p^i_{j,k}$. Given measurement noise $\eta_{j,k}^{i} \sim \mathcal{N}(0_3, \sigma I_{3\times 3})$, we model the position measurements by
\begin{equation}\label{eq:rigid-pointcloud-dynamics}
    \pmat{p^i_{j,k} \\ 1} 
    = H(t_k) \pmat{p^i_{j,0} \\ 1} 
    + \pmat{\eta^i_{j,k} \\ 0}\,.
\end{equation}  
This represents e.g., position measurements of markers on a rigid body or landmarks on a rigid body identified from video data.

Finally, we define the system state $P_{j,k} \in \R^{3N}$ of $N$ particles on the $j$-th trajectory and at time $t_k$, by   
\begin{equation}\label{eq:rigid-pointcloud}
    P_{j,k} 
    = \pmat{p^1_{j,k} \\ \vdots \\ p^N_{j,k}}\,,
\end{equation}  
and denote the finite set of state-time solution snapshots as $S \in \R^{(3N_i+1)\times N_j N_k}$ given by
\begin{equation}\label{eq:snapshots-rigid-pointcloud}
    S := \bigg[\pmat{P_{0,0} \\ t_0}, \cdots, \pmat{P_{N_j,N_k}\\ t_{N_k}}\bigg] \,.
\end{equation}  

\subsubsection{Application of a naive POD}\label{sssec:naive-POD-rigid-pointcloud}
For a naive POD, we directly investigate the singular values of $S$ without further pre-processing. Leaving noise aside for an initial analysis:

\begin{theorem}\label{thm:singular-values-rigid-snapshots}
    Assume the following:
    \begin{enumerate}
        \item The number of columns $N_jN_k$ and the number of rows $3N_i+1$ of $S$ are sufficiently large: $N_jN_k \geq 9(N_j+1)+3$ and $3N_i+1 \geq 9(N_j+1)+3$
        \item Each initial point cloud $P_{j,0} \in \R^{N_j}$ spans $\R^3$: $$\dim\textnormal{span}\{p^i_{j,0}\;|\;i \in \{0,\cdots,N_i\}\} = 3$$
        \item The matrices $R(t_k)$ span $\R^{3\times 3}$: $$\dim\textnormal{span}\{R(t_k)\;|\;k \in \{0,\cdots,N_k\}\} = 9$$
        \item The translations $b(t_k)$ span $\R^3$: $$\dim\textnormal{span}\{b(t_k)\;|\;k \in \{0,\cdots,N_k\}\} = 3$$
        \item The subspaces $$W_j := \textnormal{span}\{P_{j,k}\;|\; k \in \{0,\cdots,N_k\} \}$$ of $\R^{3N_i}$ associated with different point cloud trajectories $P_{j,k}$ are independent. 
    \end{enumerate}  
    Then the number of non-zero singular values of the snapshot set $S$ in~\eqref{eq:snapshots-rigid-pointcloud} is $9(N_j+1)+3$.
\end{theorem}

\begin{proof}
    For the proof, see Appendix~\ref{ssec:appendix-rigid-svd}.
\end{proof}

Hence, the singular values of $S$ scale with the number of point-cloud trajectories. The assumptions for Theorem~\ref{thm:singular-values-rigid-snapshots} are not strong: given randomly sampled initial pointclouds $P_{j,0}$ with sufficiently many points, they hold generically. Considering the simplicity of the underlying (6D) group dynamics~\eqref{eq:rigid-group-dynamics}, this is not yet a problem. If the group dynamics were more complicated, and required many different point-cloud trajectories to capture their complexity, the application of a POD without further processing of $S$ would become unfeasible. 

\begin{remark}\label{rem:noisefloor-rigid-snapshots}
    The noise differs per particle in the point cloud, as is common in practical scenarios, and it is effectively a random vector in $\mathbb{R}^{3N_i}$. Expressing the noise accurately requires a POD basis that spans $\mathbb{R}^{3N_i}$, leading to a noise floor in the singular values.
\end{remark}

\subsubsection{Application of MORLie}
As an example of MORLie, we solve~\eqref{eq:optimization-problem} for the non-intrusive, velocity-free cost~\eqref{eq:cost-non-intrusive-velocity-free}. Note that the space $\R^{3N_i}$ here constructed comes with a natural metric given by the average distance between individual particles: 
\begin{equation}\label{eq:dist_particle_cloud}
    \textnormal{dist}(P_{j_1},P_{j_2}) = \frac{1}{N_i} \sum_i \| p^i_{j_1} - p^i_{j_2} \|\,. 
\end{equation}

Further, let $\textnormal{Aff}(3) = GL(3,\R)\ltimes \R^3 \subseteq GL(4,\R)$ denote the affine group in three dimensions, i.e., $g \in \textnormal{Aff}(3)$ is identified with $A \in GL(3,\R),\,b \in \R^3$ and the semi-direct product $\ltimes$ denotes that the group operation is $(A_1,b_1)\cdot(A_2,b_2) = (A_1A_2, b_1 + A_1b_2)$. The affine action on a single particle $p^i_{j,k} \in \R^3$ is a left action given by
\begin{equation}
    g \cdot p^i_{j,k} = A p^i_{j,k}+b \,.
\end{equation} 
We define the left action $\Phi:\textnormal{Aff}(3)\times\R^{3N_i} \rightarrow \R^{3N_i}$ by letting $g \in \textnormal{Aff}(3)$ act affinely on each particle. For the set of admissible groups we consider $\Xi^{\textnormal{Aff}(3),\Phi}$ (cf. Def.~\ref{def:induced_admissible_pairs}). 

For fixed $(\textnormal{Aff}(3),\Phi)$, we solve for $\rho^*:\R^{3N_i}\rightarrow \textnormal{aff}(3)$ in two ways: first by directly applying Theorem~\ref{thm:explicit-rho-non-intrusive-velocity-free} (in code we use a Lie-group version of the Levenberg-Marquardt algorithm for optimization, see e.g.~\cite[Chapter 8.4.2]{Absil2008}), and second by approximating the FOM vector field as
\begin{equation}\label{eq:approximate-velocity}
    X_\mu(P_{j,k},t) \approx (P_{j,k} - P_{j+1,k})/\Delta t\,,
\end{equation}
and applying Theorem~\ref{thm:explicit-rho-non-intrusive-velocity-based} (in code the closed form solution is computed via a pseudo-inverse of $X_\g$). These are used to construct reduced snapshot matrices $S_{\textnormal{aff}(3)}$ (cf. Def.~\ref{def:reduced-snapshot-matrix}). 

The reduced snapshot matrices are then processed by Algorithm~\ref{app:alg_subalgebra_search} (cf. Appendix~\ref{ssec:appendix-code}). Algorithm~\ref{app:alg_subalgebra_search} proceeds in three steps: first, it performs a POD of $S_{\textnormal{aff}(3)}$. Singular vectors $\{\widetilde{A}_1,\cdots,\widetilde{A}_k\}\subseteq \textnormal{aff}(3)$ are identified, corresponding to singular values $\sigma_1,\cdots,\sigma_k$ such that $\sum^k_{i=1}\sigma_i > a \sum \sigma_i$, with $0<a<1$ a hyperparameter that we pick as $a = 0.99$. Finally, the singular vectors are bracketed until we arrive a set $\g \subseteq \textnormal{aff}(3)$ that is closed under the bracket, and is thus a Lie algebra.

For both cases of $S_{\textnormal{aff}(3)}$, the algorithm identified the 6-dimensional subalgebra $se(3) \subseteq \textnormal{aff}(3)$. Finally, we fit parameterized $\rho_{\theta}:\R\rightarrow se(3)$ to the identified $S_{\textnormal{aff}(3)}$, by solving
\begin{eqnarray}
    \theta^* = \arg\min_\theta \sum_{j,k} \|\rho_{\theta}(t_{j,k}) - \rho^*_{j,k}(P_{j,k})\|\,.  
\end{eqnarray}
This results in a ROM on the Lie group $SE(3)$: 
\begin{equation}
    \dot{H}(t) = \rho_{\theta^*}(t) H(t)\,,\; H(0) = I_4\,,
\end{equation}
with reconstructed solutions
\begin{equation}
    P_{i}(t) = \Phi(H(t),P_{i,0})\,.
\end{equation}

\subsubsection{Results}
We perform the computation with $N_j = 9, N_i  = 99, N_k = 999$. Executing the provided code on a Lenovo P15v, identification of $S_{\textnormal{aff}(3)}$ via Theorem~\ref{thm:explicit-rho-non-intrusive-velocity-free} takes 16 seconds, and 3 seconds using the closed form Theorem~\ref{thm:explicit-rho-non-intrusive-velocity-based}. The optimization step takes 15 seconds, for an initial guess of $\rho^*$ being a Hermite polynomial with 100 segments, fit to every 10th entry of an average of $S_{\textnormal{aff}(3)}$ over the 10 trajectories.

Snapshots of a sample trajectory of a point-cloud are shown in Figures~\ref{sfig:rigidcloud_trajectories_a}~-~\ref{sfig:rigidcloud_trajectories_c}, along with the reconstructed trajectory with $S_{\textnormal{aff}(3)}$ based on Theorem~\ref{thm:explicit-rho-non-intrusive-velocity-free}. Figures~\ref{sfig:rigidcloud_error_and_svd_a} and~\ref{sfig:rigidcloud_error_and_svd_b} shows the error of the reconstructions via MORLie, and Figure~\ref{sfig:rigidcloud_error_and_svd_e} shows the error for various projections to a POD basis. Figure~\ref{sfig:rigidcloud_error_and_svd_c} shows the singular values of $S$ in~\eqref{eq:snapshots-rigid-pointcloud}. The singular values sharply drop at $9 (N_j+1)+3 = 93$, c.f. Theorem~\ref{thm:singular-values-rigid-snapshots}. The singular values do not drop to zero since noise is added to each node, c.f. Remark~\ref{rem:noisefloor-rigid-snapshots}. Figure~\ref{sfig:rigidcloud_error_and_svd_d} shows the singular values of $S_{\textnormal{aff}(3)}$ that relate to the Kolmogorov $\Xi^{\textnormal{Aff}(3)}_N$ width of the problem. 


\begin{figure*}[t]
    \centering
    \begin{tabular}{c c c}
    \begin{subfigure}[t]{.28\textwidth}
        \def\svgwidth{1.15\textwidth}\scriptsize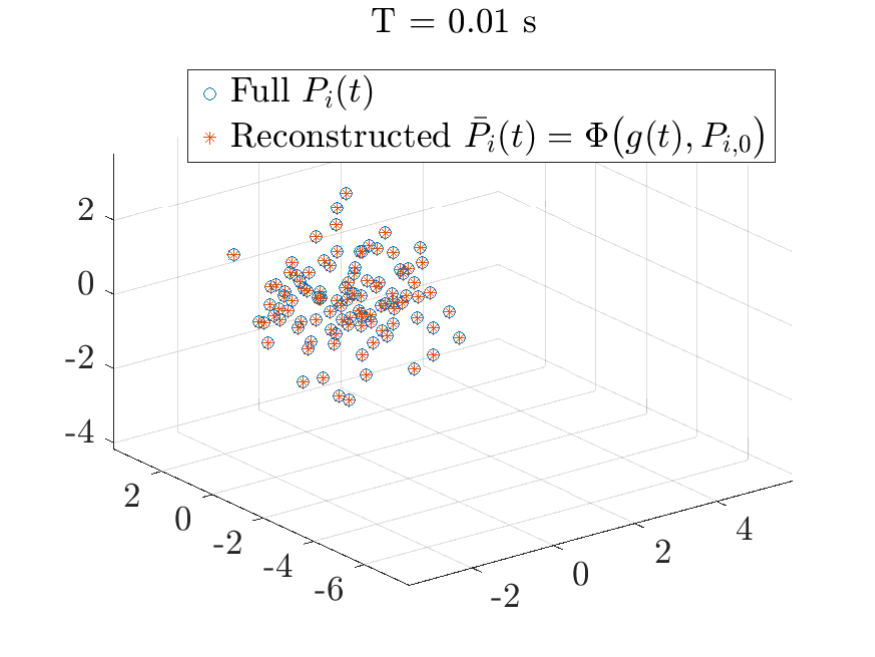
        \subcaption{T = 0.01s.}\label{sfig:rigidcloud_trajectories_a}
    \end{subfigure}
    &
    \begin{subfigure}[t]{.28\textwidth}
        \def\svgwidth{1.15\textwidth}\scriptsize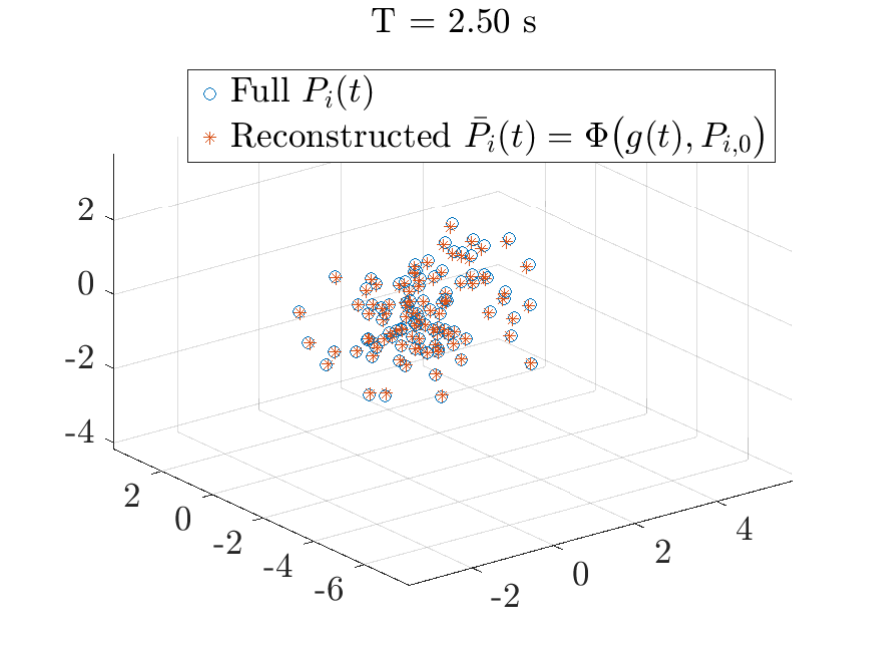
        \subcaption{T = 2.5s.}\label{sfig:rigidcloud_trajectories_b}
    \end{subfigure}
    &
    \begin{subfigure}[t]{.28\textwidth}
        \def\svgwidth{1.15\textwidth}\scriptsize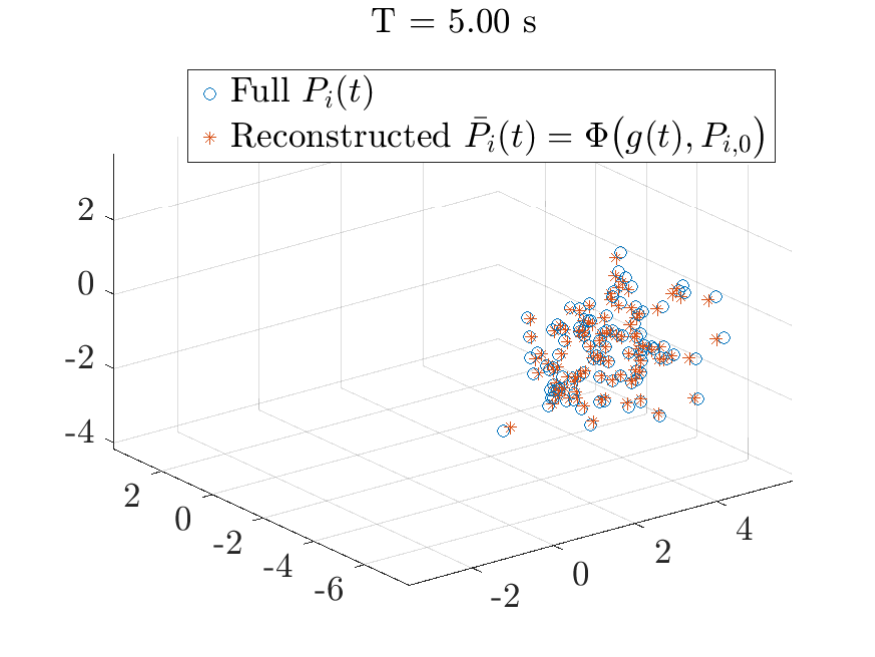
        \subcaption{T = 5s.}\label{sfig:rigidcloud_trajectories_c}
    \end{subfigure}
    \end{tabular}
    \caption{\small Example~\ref{ssec:rigid-pointcloud},  rigidly evolving pointlcloud with noise (blue circles) and reconstructed solution (red stars). }\label{fig:rigidcloud_trajectories}
\end{figure*}

\begin{figure*}[t]
    \centering
    \begin{tabular}{c c c}
    \begin{subfigure}[t]{.24\textwidth}
        \includegraphics[height=9.5em]{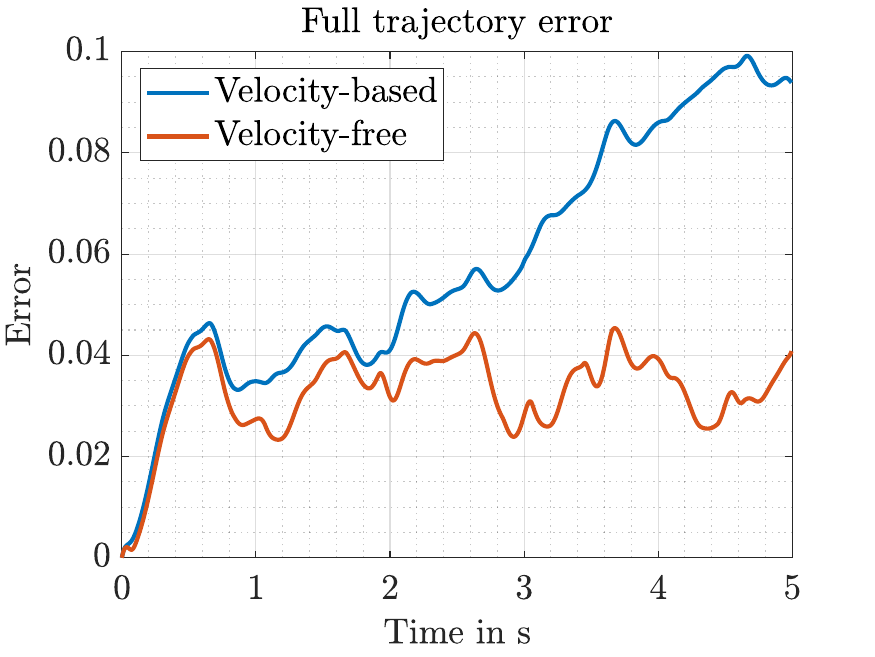}
        \subcaption{Full trajectory reconstruction errors $\textnormal{dist}\big(\bar{P}(t),P(t)\big)$ for different fits. }\label{sfig:rigidcloud_error_and_svd_a}
    \end{subfigure}
    &
    \begin{subfigure}[t]{.24\textwidth}
        \includegraphics[height=9.5em]{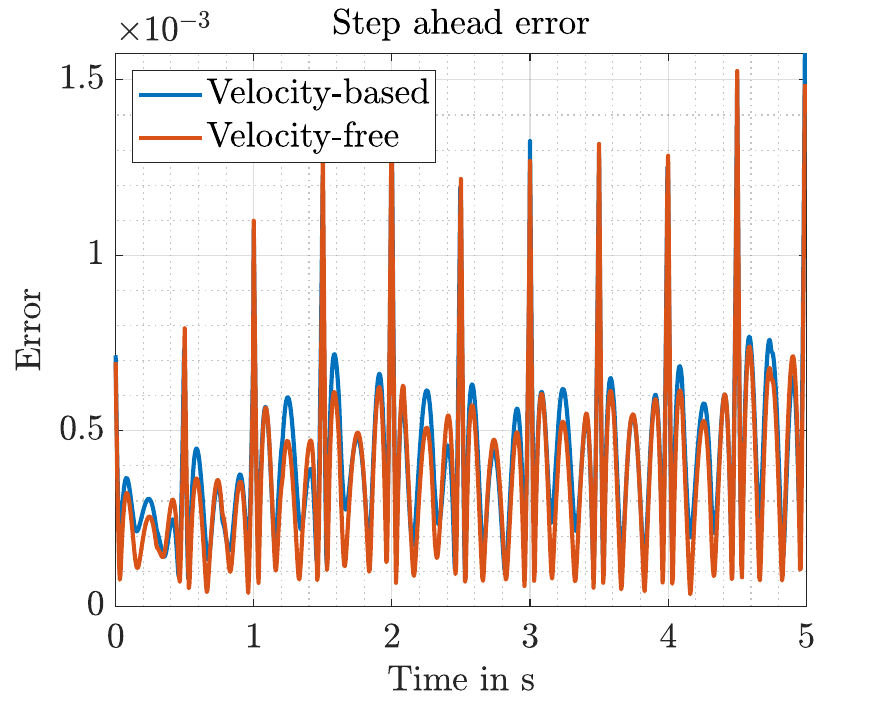}
        \subcaption{Step ahead reconstruction error $\textnormal{dist}\big(\bar{P}_{P(t)}(\Delta t),P(t+\Delta t)\big)$ for different fits, where $\bar{P}_{P(t)}(s)$ is an approximate solution starting at $P(t)$.}\label{sfig:rigidcloud_error_and_svd_b}
    \end{subfigure}
    &
    \begin{subfigure}[t]{.24\textwidth}
        \includegraphics[height=9.5em]{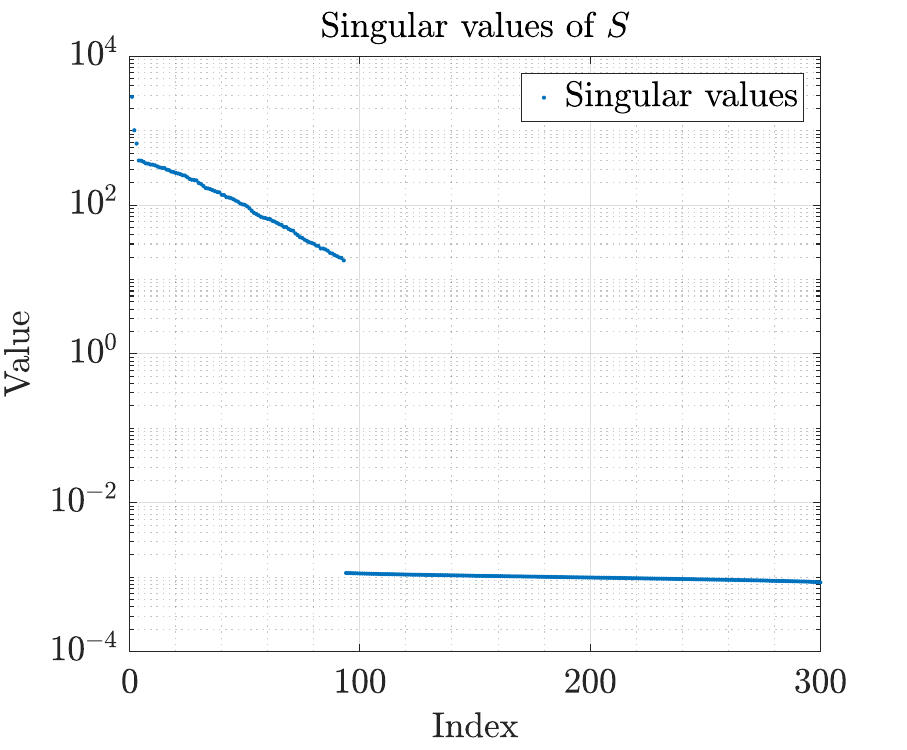}
        \subcaption{Singular values of $S$.
        }\label{sfig:rigidcloud_error_and_svd_c}
    \end{subfigure}
    \\
    \begin{subfigure}[t]{.24\textwidth}
        \includegraphics[height=9.5em]{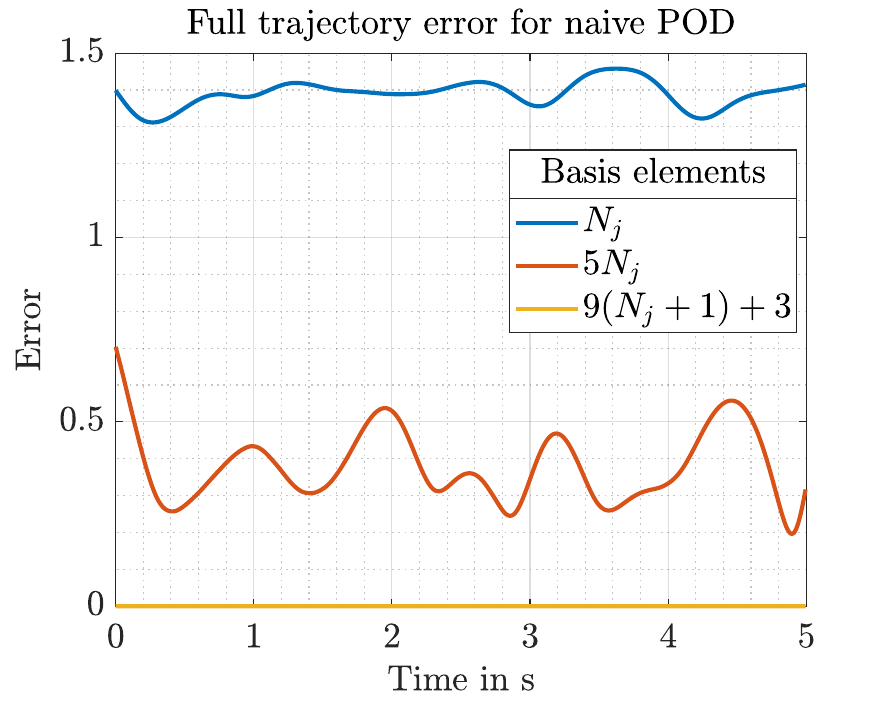}
        \subcaption{Full trajectory errors $\textnormal{dist}\big(P_{POD}(t),P(t)\big)$, where $P_{POD}$ is a projection of $P$ to a POD basis.}\label{sfig:rigidcloud_error_and_svd_e}
    \end{subfigure}
    &
    &
    \begin{subfigure}[t]{.24\textwidth}
        \includegraphics[height=9.5em]{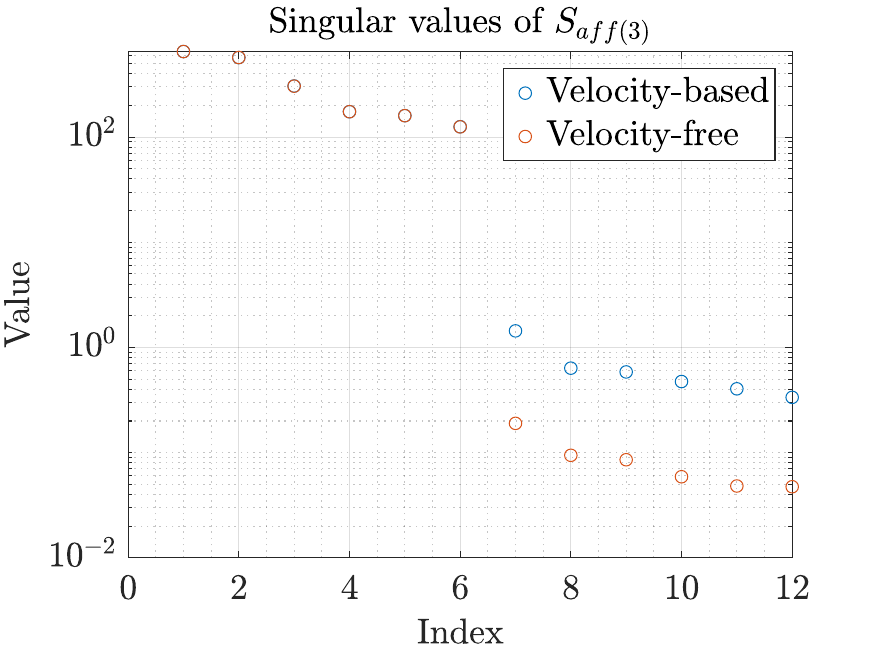}
        \subcaption{Singular values of $S_{\textnormal{aff}(3)}$.}\label{sfig:rigidcloud_error_and_svd_d}
    \end{subfigure}
    \end{tabular}
    \caption{\small Example~\ref{ssec:rigid-pointcloud}, reconstruction errors and singular values for rigidly evolving pointcloud.}\label{fig:rigidcloud_error_and_svd}
\end{figure*}


\subsection{Sheering pointclouds}\label{ssec:sheering-pointclouds}
The following example features optimization over a set of admissible groups as well as actions. 
We will recover group motions of multiple particle clusters 
from noisy measurements of point clouds consisting of multiple affinely deforming subsets. The example is similar to the previous section, except that we consider trajectories of individual particles given by 
\begin{equation}
    p^i_{j,k} 
    = A_{f(j)}(t_k) p^i_{j,0} 
    + b_{f(j)}(t_k) + \eta^i_{j,k}\,.
\end{equation}  
where $f(j) \in \{0,1,\cdots, n_G\}$ assigns to distinct affine transformations following dynamics 
\begin{equation}
    \dot{g}_{f(j)} = \widetilde{A}_{f(j)}(t) g_{f(j)}\,,\; g_{f(j)}(0) = g_{f(j),0}\,.
\end{equation}
As a search space for groups, we consider any number of copies of the affine group, i.e., groups of the form $G = \textnormal{Aff}^{n_G}(3) := \textnormal{Aff}(3) \times \ldots \times \textnormal{Aff}(3)$, of a to-be-determined number $n_G$ of copies of $\textnormal{Aff}(3)$. For group actions, we allow any fixed assignment of particles to group clusters, which transform them by the respective affine action. This forms a large set of admissible actions $\Xi^{G,\Phi}$.

For an initial optimization inspired by Theorem~\ref{thm:decoupled_optimization}, we present Algorithm~\ref{app:cluster_search} in Appendix~\ref{ssec:appendix-code}. This algorithm uses a nearest-neighbor filtering approach to split finite $S$ into clusters $S_1,\cdots, S_{n_G}$, without further prior knowledge, such that points $p^i_{j,k}$ in $S_{f(j)}$ satisfy 
\begin{equation}
    p^i_{j,k} 
    \approx A_{f(j)}(t_k) p^i_{j,0} 
    + b_{f(j)}(t_k) \,.
\end{equation}
Thus, we arrive at $(G^*,\Phi^*)$, with $G^*$ consisting of $n_G$ copies of $\textnormal{Aff}(3)$, and $\Phi$ assigning affine actions to unique clusters. Then $S_\g$ is identified as previously detailed. Again, a subalgebra is identified by Algorithm~\ref{app:alg_subalgebra_search} resulting in a dynamics on a subgroup $(H^*,\Phi^*)$. %

\subsubsection{Results}
Snapshots of a sample trajectory of two point-clouds with 100 points each are shown in Figures~\ref{sfig:sheeringclouds_trajectories_a}~-~\ref{sfig:sheeringclouds_trajectories_c}. In terms of the previous example, we have $N_j = 9, N_i  = 199, N_k = 999$. Executing the provided code on a Lenovo P15v, identification of $n_G = 2$ and identification of $S_{\textnormal{aff}^2(3)}$ via Theorem~\ref{thm:explicit-rho-non-intrusive-velocity-free} take 34 seconds, and 7 seconds using the closed form in Theorem~\ref{thm:explicit-rho-non-intrusive-velocity-based}. The optimization step again takes on the order of 15 seconds.

Figures~\ref{sfig:sheeringclouds_error_and_svd_a} and~\ref{sfig:sheeringclouds_error_and_svd_b} shows the error of the reconstructions via MORLie, Figure~\ref{sfig:sheeringclouds_error_and_svd_c} shows the singular values of $S$ in~\eqref{eq:snapshots-rigid-pointcloud} that relate to the Kolmogorov $N$-width of the problem, and Figure~\ref{sfig:sheeringclouds_error_and_svd_d} shows the singular values of $S_{\textnormal{aff}^2(3)}$ that relate to the Kolmogorov $\Xi^{\textnormal{Aff}^2(3)}_N$ width of the problem. 

\begin{figure*}[t]
    \centering
    \begin{tabular}{c c c}
    \begin{subfigure}[t]{.28\textwidth}
        \def\svgwidth{1.15\textwidth}\scriptsize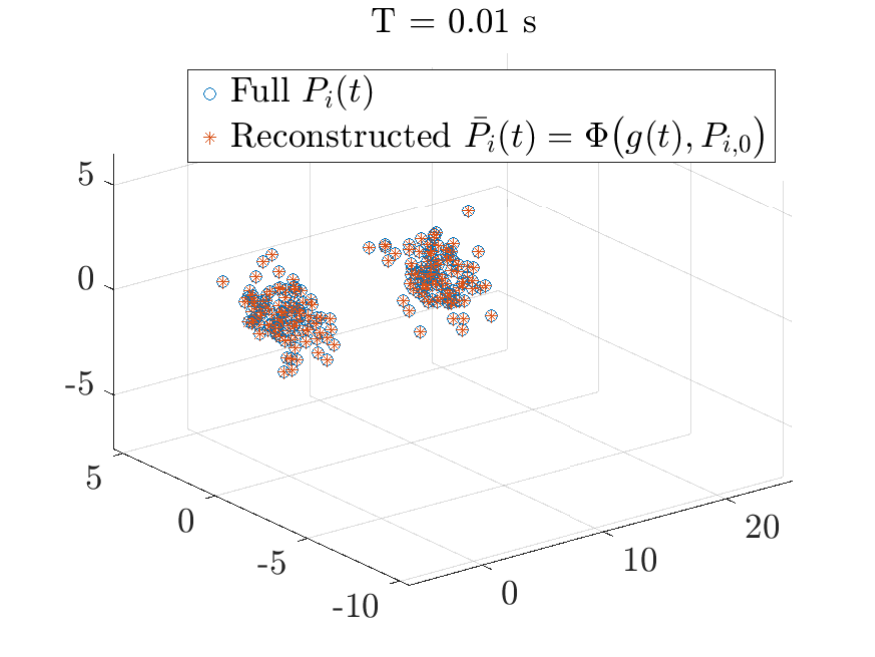
        \subcaption{T = 0.01s.}\label{sfig:sheeringclouds_trajectories_a}
    \end{subfigure}
    &
    \begin{subfigure}[t]{.28\textwidth}
        \def\svgwidth{1.15\textwidth}\scriptsize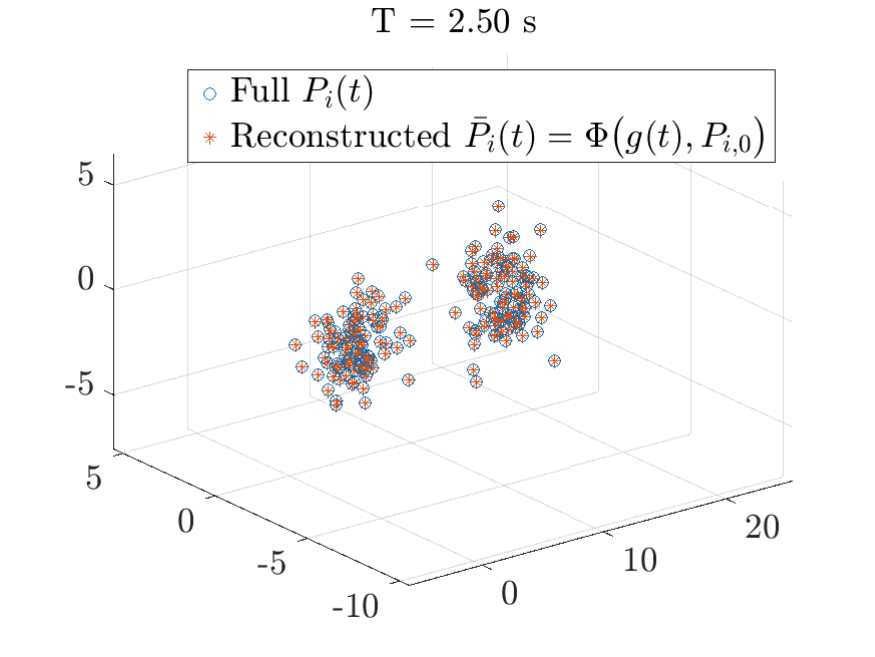
        \subcaption{T = 2.5s.}\label{sfig:sheeringclouds_trajectories_b}
    \end{subfigure}
    &
    \begin{subfigure}[t]{.28\textwidth}
        \def\svgwidth{1.15\textwidth}\scriptsize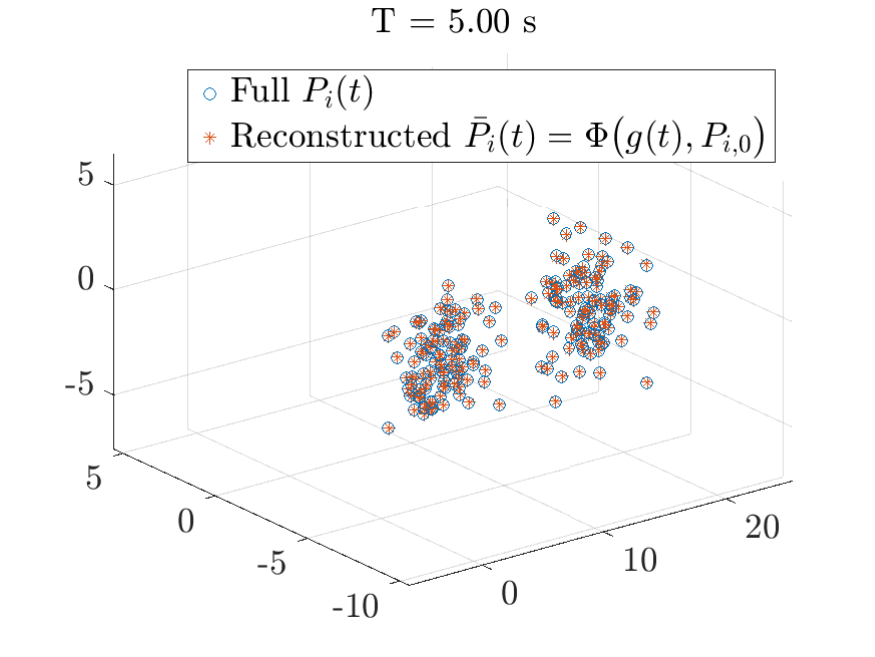
        \subcaption{T = 5s.}\label{sfig:sheeringclouds_trajectories_c}
    \end{subfigure}
    \end{tabular}
    \caption{\small Example~\ref{ssec:sheering-pointclouds}, two sheering pointclouds with noise (blue circles) and reconstructed solution (red stars). }\label{fig:sheeringclouds_trajectories}
\end{figure*}

\begin{figure*}[t]
    \centering
    \begin{tabular}{c c c c}
    \begin{subfigure}[t]{.24\textwidth}
        \includegraphics[height=9em]{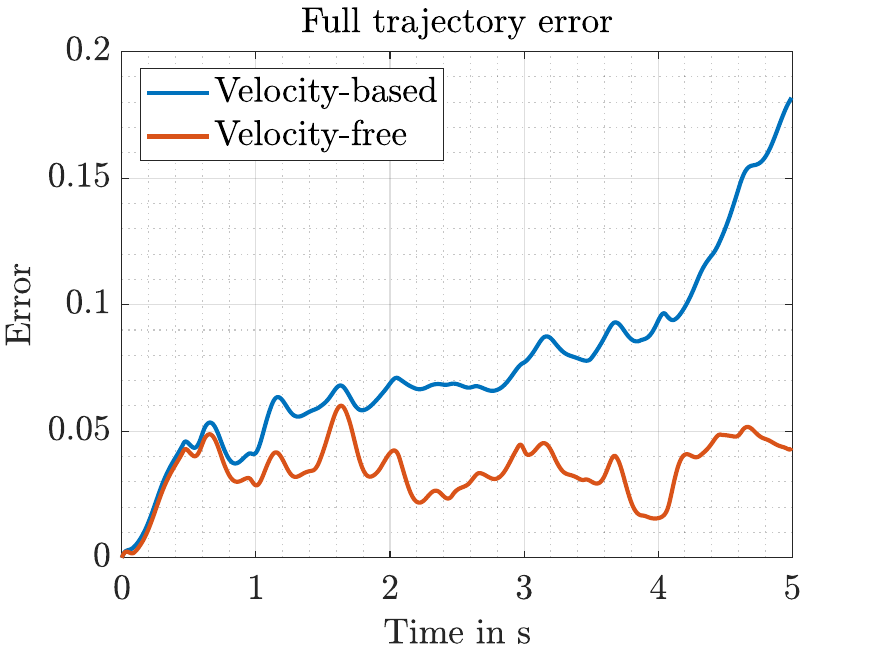}
        \subcaption{Full trajectory reconstruction errors $\textnormal{dist}\big(\bar{P}(t),P(t)\big)$ for different fits. }\label{sfig:sheeringclouds_error_and_svd_a}
    \end{subfigure}
    &
    \begin{subfigure}[t]{.24\textwidth}
        \includegraphics[height=9em]{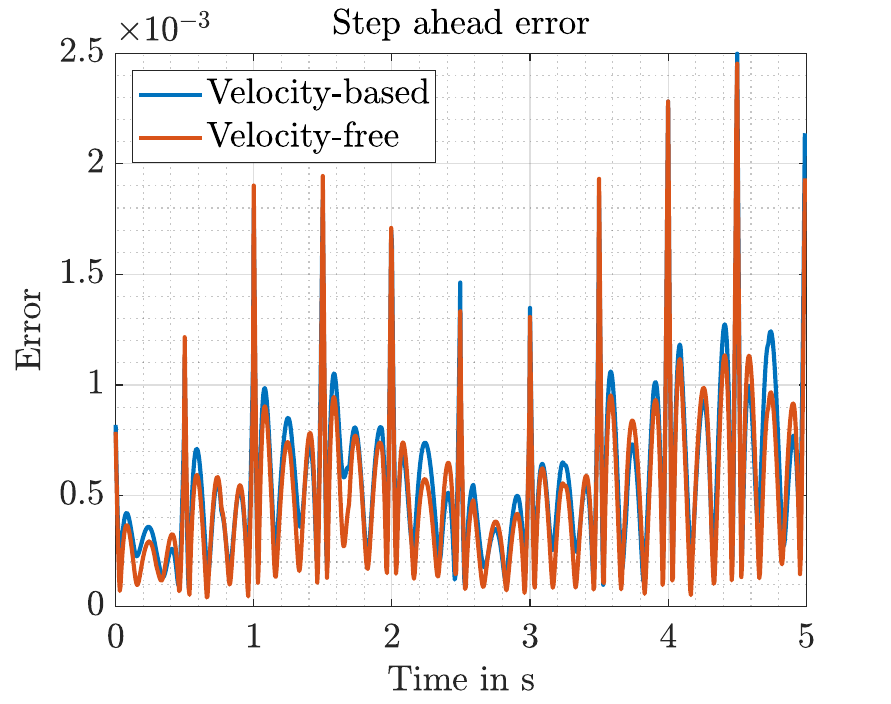}
        \subcaption{Step ahead reconstruction error $\textnormal{dist}\big(\bar{P}_{P(t)}(\Delta t),P(t+\Delta t)\big)$ for different fits, where $\bar{P}_{P(t)}(s)$ is an approximate solution starting at $P(t)$.}\label{sfig:sheeringclouds_error_and_svd_b} 
    \end{subfigure}
    &
    \begin{subfigure}[t]{.24\textwidth}
        \includegraphics[height=9em]{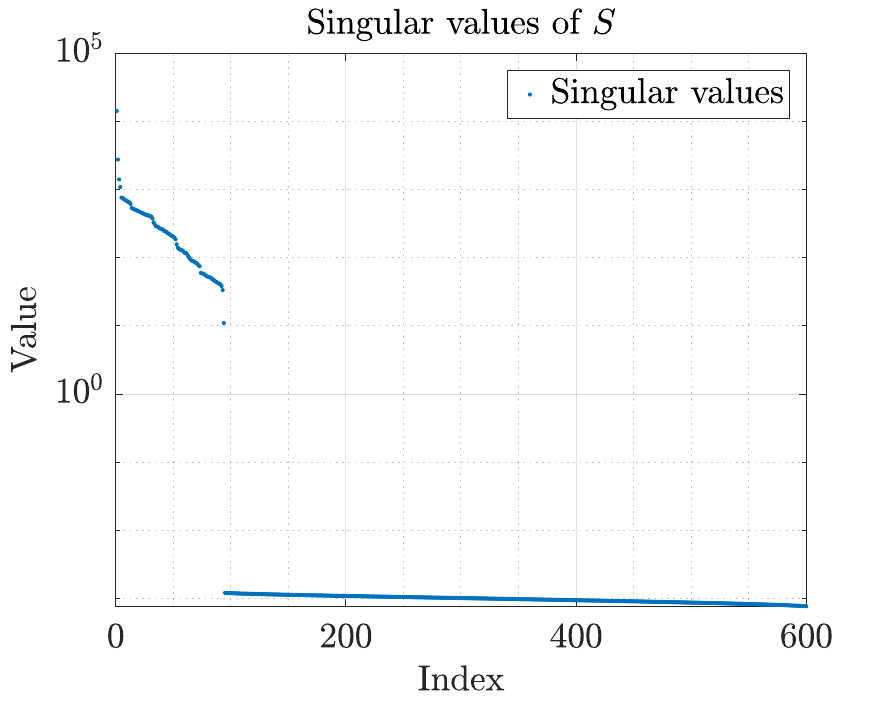}
        \subcaption{Singular values of $S$.}\label{sfig:sheeringclouds_error_and_svd_c}
    \end{subfigure}
    &
    \begin{subfigure}[t]{.24\textwidth}
        \includegraphics[height=9em]{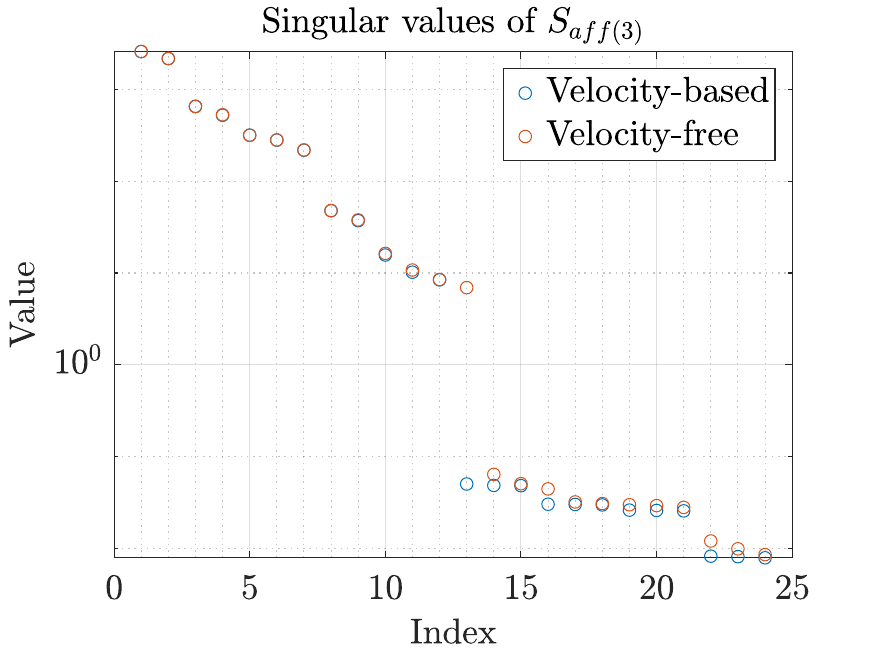}
        \subcaption{Singular values of $S_{\textnormal{aff}^2(3)}$.}\label{sfig:sheeringclouds_error_and_svd_d}
    \end{subfigure}
    \end{tabular}
    \caption{\small Example~\ref{ssec:sheering-pointclouds}, reconstruction errors and singular values for sheering pointclouds.}\label{fig:sheeringclouds_error_and_svd}
\end{figure*}

\subsection{Tracking a liver during respiration}\label{ssec:liver}
We present a medical application using the presented theory, highlighting practical applicability. The example concerns liver-tracking during respiration, a challenging problem that affects surgical procedures due to internal deformation and motion of the flexible liver as a patient breathes during surgery. In our example, we consider a set of points in the form of~\eqref{eq:snapshots-rigid-pointcloud}, collected from edge-tracking of a patient's liver during respiration. A time-dependent trajectory on $SE(3)$ is reconstructed following the steps in Sec.~\ref{ssec:rigid-pointcloud}~--~ this is shown in Figures~\ref{sfig:liver_trajectories_a} to~\ref{sfig:liver_trajectories_c}. The errors of the reconstruction are shown in Figures~\ref{sfig:liver_error_and_svd_a} and~\ref{sfig:liver_error_and_svd_b}. 

\begin{figure*}[t]
    \centering
    \begin{tabular}{c c c}
    \begin{subfigure}[t]{.28\textwidth}
        \def\svgwidth{1.15\textwidth}\scriptsize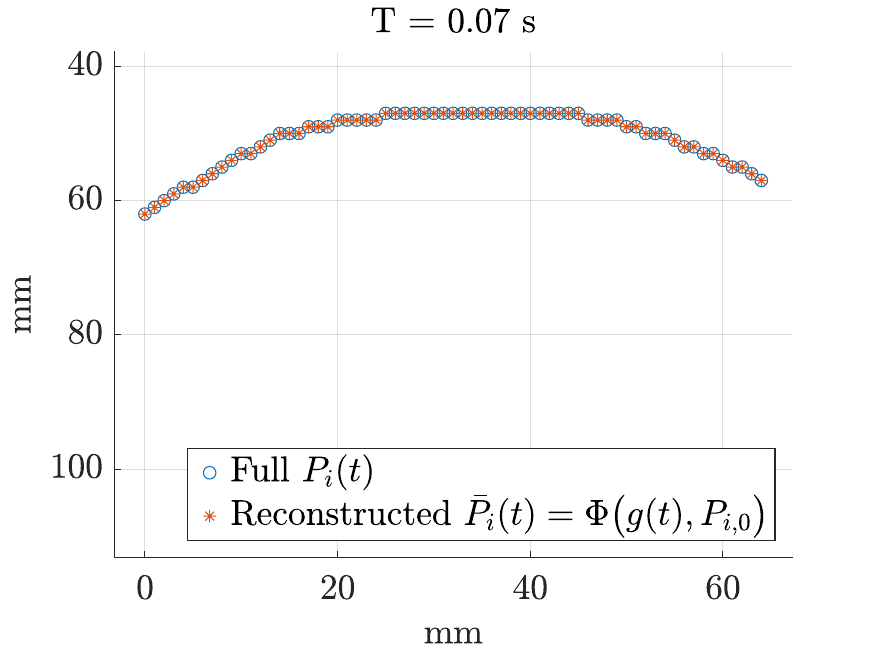
        \subcaption{T = 0.07s.}\label{sfig:liver_trajectories_a}
    \end{subfigure}
    &
    \begin{subfigure}[t]{.28\textwidth}
        \def\svgwidth{1.15\textwidth}\scriptsize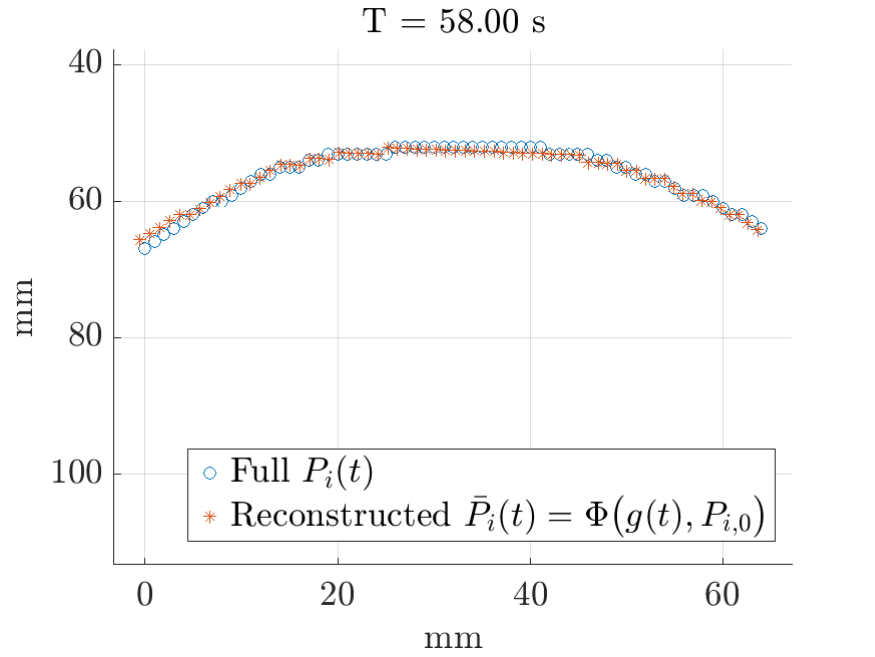
        \subcaption{T = 58s.}\label{sfig:liver_trajectories_b}
    \end{subfigure}
    &
    \begin{subfigure}[t]{.28\textwidth}
        \def\svgwidth{1.15\textwidth}\scriptsize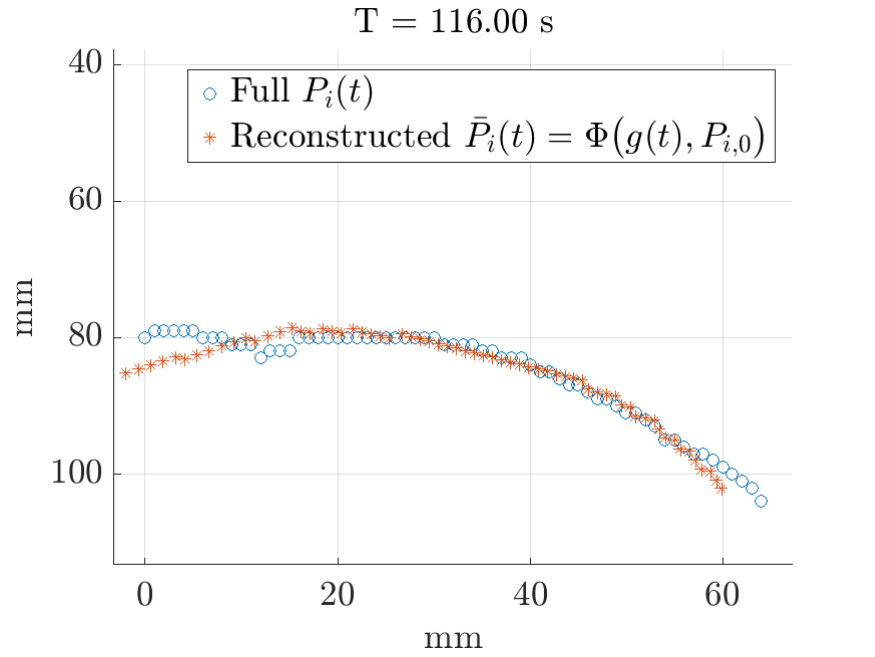
        \subcaption{T = 116s.}\label{sfig:liver_trajectories_c}
    \end{subfigure}
    \end{tabular}
    \caption{\small Example~\ref{ssec:liver}, edge-tracking of points on a liver deforming during respiration (blue circles), and reconstructed solution assuming a motion on $SE(3)$ (red stars). }\label{fig:liver_trajectories}
\end{figure*}

\begin{figure*}[t]
    \centering
    \begin{tabular}{c c}
    \begin{subfigure}[t]{.24\textwidth}
        \def\svgwidth{1.1\textwidth}\scriptsize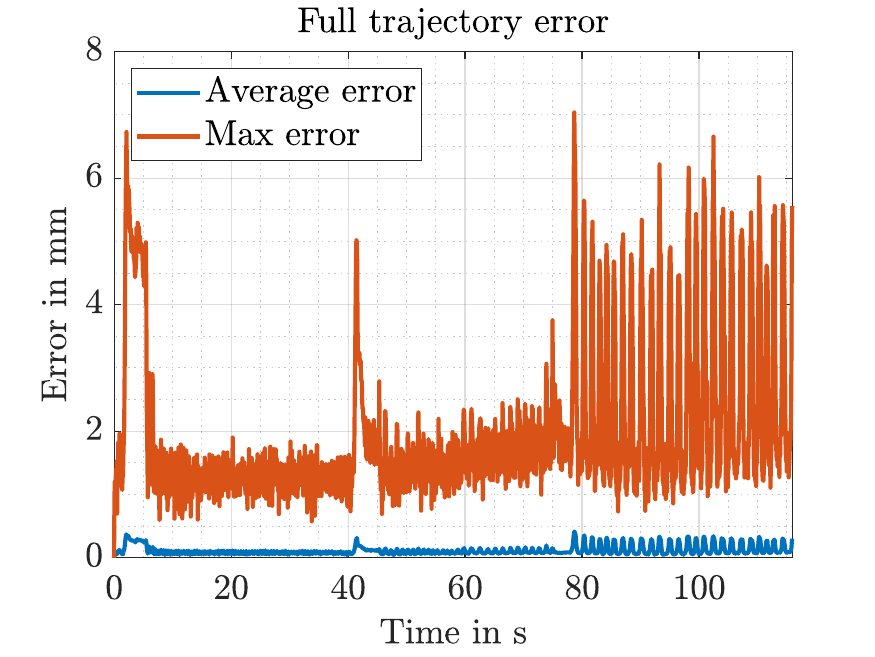
        \subcaption{Full trajectory reconstruction errors $\textnormal{dist}\big(\bar{P}(t),P(t)\big)$ for different fits. }\label{sfig:liver_error_and_svd_a}
    \end{subfigure}
    &
    \begin{subfigure}[t]{.24\textwidth}
        \def\svgwidth{1.05\textwidth}\scriptsize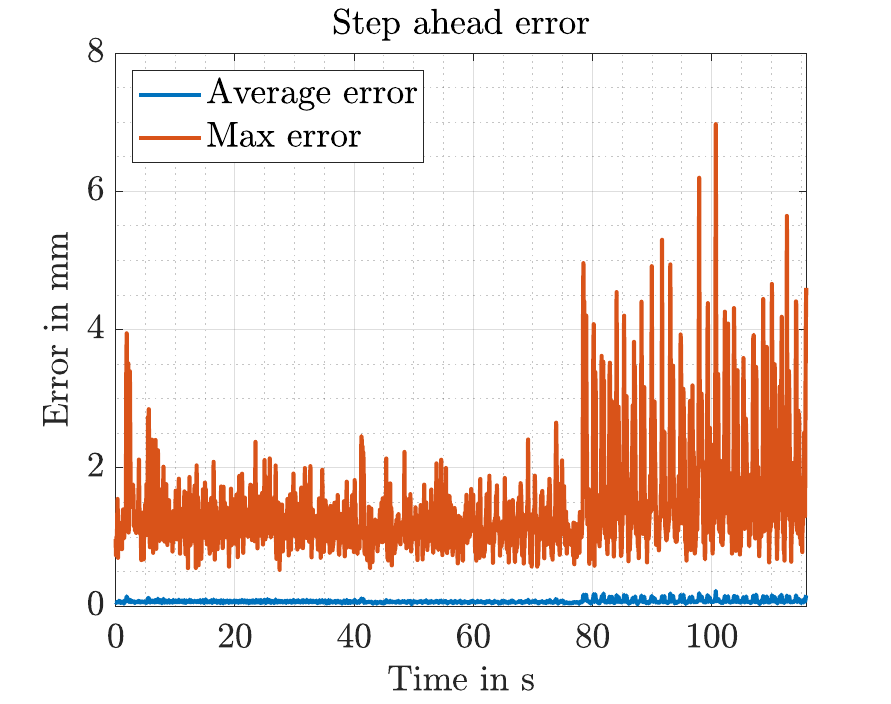
        \subcaption{Step ahead reconstruction error $\textnormal{dist}\big(\bar{P}_{P(t)}(\Delta t),P(t+\Delta t)\big)$ for different fits, where $\bar{P}_{P(t)}(s)$ is an approximate solution starting at $P(t)$.}\label{sfig:liver_error_and_svd_b}
    \end{subfigure}
    \end{tabular}
    \caption{\small Example~\ref{ssec:liver}, reconstruction errors for example of liver deforming during respiration. For the average error $\textnormal{dist}(\bar{P},P)$ is as in~\eqref{eq:dist_particle_cloud}, and for the maximum error, $\textnormal{dist}(\bar{P},P) = \max_i \|\bar{p}^i - p^i\|$.}\label{fig:liver_error_and_svd}
\end{figure*}

\subsection{Discussion}

The results in Section~\ref{ssec:rigid-pointcloud} show that a MORLie ROM on a Lie group $SE(3)$ was successfully fit to pointcloud data undergoing a rigid motion, without assuming prior knowledge of the particular Lie group. Figure~\ref{sfig:rigidcloud_error_and_svd_d} shows that the dimension of the identified Lie group, 6, reflects in the singular values of the reduced snapshot matrix. Instead, Section~\ref{sssec:naive-POD-rigid-pointcloud} and Figure~\ref{sfig:rigidcloud_error_and_svd_c} show that $9(N_j+1)+3 = 93$ terms are required to capture most complexity of the data with a naive POD. Given more complicated underlying group dynamics requiring larger batches of experimental point cloud data, this is expected to make a naive POD unfeasible for data-based ROM identification~--~increasing the number of experiments should increase ROM accuracy, not the number of terms required. 

We further note that the twist $\widetilde{T}(t)$ had to be chosen of sufficient complexity for Algorithm~\ref{app:alg_subalgebra_search} to correctly identify the subalgebra $se(3)$. For increasing measurement noise, the Algorithm became less reliable and no longer identified $se(3)$ correctly, instead terminating at $\textnormal{aff}(3)$. Future work may investigate more robust version of Algorithm~\ref{app:alg_subalgebra_search}. Similarly, identifying $se(3)$ from Algorithm~\ref{app:alg_subalgebra_search} still required expert knowledge, because the resulting basis elements are non-standard. This may be improved by e.g., comparing the identified sub-algebra against a library of subalgebras.

The results in Section~\ref{ssec:sheering-pointclouds} show that a MORLie ROM on a Lie group $\textnormal{Aff}^2(3)$ can be identified even when there are multiple affinely deforming and moving clusters of particles, and also when there is some overlap between the clusters. This is a first step towards more complex group-actions on clusters of particles, in which non-affine group actions may be considered. 

Both results of Section~\ref{ssec:rigid-pointcloud} and~\ref{ssec:sheering-pointclouds} support that the velocity-free optimization, albeit more computationally expensive, leads to more accurate results. The authors explain this by the velocity-free optimization being more robust to the Gaussian noise $\eta$, which is amplified in the velocity-based method through the numerical differentiation~\eqref{eq:approximate-velocity}. 

Finally, the Section~\ref{ssec:liver} shows that a rigid motion accurately models the livers motion during respiration for shallow breathing (Figures~\ref{sfig:liver_trajectories_a} and~\ref{sfig:liver_trajectories_b}), but not for deep breaths (Figure~\ref{sfig:liver_trajectories_c}), during which the liver significantly deforms. This reflects in the full-trajectory error in Figure~\ref{sfig:liver_error_and_svd_a}, which is accurate to within 2mm for large parts of the trajectory but increases up to 7mm when deformation plays a large role. Future work may investigate nonlinear group actions to more accurately model deformations during respiration.

\section{Conclusion}\label{sec:conclusion}
We presented a novel geometric framework for model order reduction via Lie groups (MORLie), giving a perspective on reduced order modeling that diverges from established linear subspace and submanifold methods for MOR. In this picture, the FOM evolves on a differentiable manifold and the ROM is cast on a Lie group acting on the FOM manifold. We defined a generalized notion of Kolmogorov $N$-widths~--~the group Kolmogorov $N$-width~--~showed how the familiar notion of Kolmogorov $N$-widths for linear subspaces is recovered as a special case, how the Kolmogorov $N$-width of the presented methods is generally lower than the linear subspace Kolmogorov $N$-width, and related it to recent nonlinear widths. We derived optimization methods for intrusive and non-intrusive optimization of the ROM and presented initial ideas for hyperreduction. In an extensive analytic examples, we apply MORLie to the transport equation and show that existing Lie group methods can be recast in the framework of MORLie. Finally, for the example of rigid and affinely evolving point-clouds we computationally implement the MOR process, and show that it scales better than linear subspace approximations, achieving a consistently low dimension.

\section{Acknowledgements}
We want to thank Ana Cord\'{o}n Avila for sharing the data on edge-tracking in MRI scans of a patient's liver during respiration.



\appendix \label{sec:appendix}

\section{Product reductions}\label{ssec:MORLie_Products}

In Section~\ref{ssec:method_of_freezing}, we combine non-commuting actions $\Phi_G:G\times\M \rightarrow \M$ and $\Phi_H:H\times\M \rightarrow \M$ into an action $\widetilde{\Phi}_{G\times H}:(G\times H) \times \M_G \rightarrow \M_G$. Here, we present the corresponding Theorems, alongside a further technical extension that shows how MORs via different Lie groups, i.e., $(G,\Phi_G,\rho_G)$ and $(H,\Phi_H,\rho_H)$, can be combined into one product reduction $(G\times H,\Phi_{G\times H},\rho_{G\times H})$. 
We first treat the case where $\Phi_G$ and $\Phi_H$ commute. 

\begin{theorem}[Product ROM for commuting actions]\label{thm:product_reduction}
    Given two ROMs of $\M,\, X\in \Gamma(T\M)$ as $(G,\Phi_G,\rho_G)$ and $(H,\Phi_H,\rho_H)$ 
    \begin{align}
        \Phi_G&:G\times\M\rightarrow\M \,,\; \rho_G:\M\rightarrow\g \,, \\
        \Phi_H&:H\times\M \rightarrow\M\,,\; \rho_H:\M\rightarrow\mathfrak{h}\,,
    \end{align}
    such that $\Phi_H$ and $\Phi_G$ commute:
    \begin{equation}
        \forall g\in G, h \in H: \; \Phi_{G,g} \circ \Phi_{H,h} = \Phi_{H,h} \circ \Phi_{G,g} \,.
    \end{equation}    
    Define $\Phi_{G\times H}: G\times H \times \M \rightarrow \M$ and $\rho_{G\times H}:\M \rightarrow \g \times \mathfrak{h}$ as
    \begin{align}
        \Phi_{G\times H} \big((g,h),\state\big) &:= \Phi_G\big(g,\Phi_H(h,\state)\big) \,, \label{eq:product_action} \\ 
        \rho_{G\times H}(\state) &:= (\rho_G(\state),\rho_H(\state)) \label{eq:product_reduced_vector_field}\,.
    \end{align}
    Then:
    \begin{enumerate}
        \item $\Phi_{G\times H}: G\times H \times \M \rightarrow \M$ is a group action
        \item $(G\times H, \Phi_{G\times H}, \rho_{G\times H})$ induces the approximated dynamics (cf. Def.~\ref{def:AFOM}) 
        \begin{align}\label{eq:product_AFOM}
            \appX(\state) 
            = X_{\rho_G(\state)}(\state) + X_{\rho_H(\state)}(\state) \,.
        \end{align}
    \end{enumerate}
\end{theorem}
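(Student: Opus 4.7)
The plan is to verify the two claims in sequence, exploiting the fact that $G \times H$ carries the product group structure $(g_1,h_1)\cdot(g_2,h_2) = (g_1 g_2, h_1 h_2)$, whose Lie algebra is the direct sum $\g \times \mathfrak{h}$ with componentwise exponential $\exp_{G\times H}\big((\widetilde A,\widetilde B)t\big) = \big(e^{\widetilde A t}, e^{\widetilde B t}\big)$.

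For claim 1, I would first check smoothness of $\Phi_{G\times H}$, which is immediate from~\eqref{eq:product_action} as a composition of smooth maps. The identity axiom $\Phi_{G\times H}((e_G,e_H),\state) = \Phi_G(e_G, \Phi_H(e_H,\state)) = \state$ follows directly. For the homomorphism property I would unfold
\begin{equation}
\Phi_{G\times H}\big((g_1g_2,h_1h_2),\state\big) = \Phi_G\big(g_1g_2,\, \Phi_H(h_1h_2,\state)\big)
\end{equation}
and use that $\Phi_G,\Phi_H$ are each left actions to expand the right-hand side into $\Phi_{G,g_1}\circ\Phi_{G,g_2}\circ\Phi_{H,h_1}\circ\Phi_{H,h_2}(\state)$. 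At this point the assumed commutativity $\Phi_{G,g_2}\circ\Phi_{H,h_1} = \Phi_{H,h_1}\circ\Phi_{G,g_2}$ is invoked to swap the inner two maps, recovering $\Phi_{G\times H}((g_1,h_1),\Phi_{G\times H}((g_2,h_2),\state))$, which is exactly~\eqref{eq:group_homomorphism}.

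For claim 2, I would compute the infinitesimal generator of $(\widetilde A,\widetilde B)\in\g\times\mathfrak{h}$ from Def.~\ref{def:infinitesimal_generator}, then specialize to $(\widetilde A,\widetilde B) = \rho_{G\times H}(\state)$. Introduce the auxiliary map $F(s,t) := \Phi_G\big(e^{\widetilde A s}, \Phi_H(e^{\widetilde B t},\state)\big)$ and apply the chain rule to $\frac{\ext}{\ext t}\big|_{t=0} F(t,t) = \partial_s F(0,0) + \partial_t F(0,0)$. The first partial evaluates to $X_{\widetilde A}(\state)$ using $\Phi_H(e_H,\state) = \state$ and Def.~\ref{def:infinitesimal_generator} applied to $\Phi_G$; the second evaluates to $X_{\widetilde B}(\state)$ using $\Phi_G(e_G,\cdot) = \mathrm{id}$. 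Substituting $(\widetilde A,\widetilde B) = (\rho_G(\state),\rho_H(\state))$ yields $\appX(\state) = X_{\rho_G(\state)}(\state) + X_{\rho_H(\state)}(\state)$ as required by~\eqref{eq:product_AFOM}.

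I do not foresee serious obstacles: the only subtle point is that commutativity is essential for claim 1 but is \emph{not} used in claim 2 (the derivative at the identity is linear and picks up both components independently). I would make this observation explicit to avoid confusion, and also flag that the infinitesimal generator lies in the sum of the induced distributions $\Delta_G + \Delta_H$, which generally differs from the distribution induced by a single action on $\M$ unless $\Delta_G$ and $\Delta_H$ are compatible.
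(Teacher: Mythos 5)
Your proposal is correct and follows essentially the same route as the paper's proof: the homomorphism property is established by expanding the composite and using commutativity to swap the two inner maps, and the approximated dynamics are obtained by differentiating $\Phi_G\big(e^{\rho_G(\state)s},\Phi_H(e^{\rho_H(\state)s},\state)\big)$ at $s=0$ and splitting the derivative into the two partials. Your explicit auxiliary map $F(s,t)$ merely makes transparent the chain-rule step that the paper performs implicitly, and your remark that commutativity is needed only for claim~1 is a correct and worthwhile clarification.
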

\begin{proof}[Theorem ~\ref{thm:product_reduction}]
    We begin by proving that $\Phi_{G\times H}: G\times H \times \M \rightarrow \M$ is a group action.
    We show that $\Phi_{G\times H}$ fulfills the homomorphism property~\eqref{eq:group_homomorphism}:
    \begin{align*}
         & \Phi_{G\times H}\big((g_1,h_1)\cdot(g_2,h_2), x\big)  = \Phi_{G\times H}\big((g_1 g_2,h_1h_2), x\big) \\
        & = \Phi_{G}\big(g_1g_2, \Phi_H\big(h_1h_2, x \big) \big) \\
        & = \Phi_{G}\big(g_1, \cdot\big)\circ \Phi_{G}\big(g_2, \cdot\big) \circ \Phi_{H}\big(h_1, \cdot\big) \circ \Phi_H\big(h_2, x \big) \\ 
        & = \Phi_{G}\big(g_1, \cdot\big) \circ \Phi_{H}\big(h_1, \cdot\big) \circ \Phi_{G}\big(g_2, \cdot\big) \circ \Phi_H\big(h_2, x \big) \\ 
        & = \Phi_{G\times H}\big((g_1,h_1),\cdot) \circ \Phi_{G\times H}\big((g_2,h_2),x\big)\,.
    \end{align*}
    The first equality holds by definition of the product group, the second and third equality hold by definition~\eqref{eq:product_action}, while the fourth equality makes use of the commutativity of $\Phi_G$ and $\Phi_H$. Finally, the fifth equality reapplies the definition~\eqref{eq:product_action}.
    Next, we show that $(G\times H, \Phi_{G\times H}, \rho_{G\times H})$ induces the approximated dynamics 
    \begin{align}
        \appX(\appstate) 
        = X_{\rho_G(\appstate)}(\appstate) + X_{\rho_H(\appstate)}(\appstate) \,.
    \end{align}
    By Definition~\ref{def:AFOM}, the approximated dynamics induced by $(G\times H, \Phi_{G\times H}, \rho_{G\times H})$ are given by the infinitesimal generator $X_{G\times H, \rho_{G\times H}(\appstate)}(\appstate)$:
    \begin{align*}
        \appX(\appstate) &= X_{G\times H, \rho_{G\times H}(\appstate)}(\appstate) \\ 
        &= \frac{\ext}{\ext s} \Phi_{G\times H}\big(e^{\rho_{G\times H}(\appstate)s},\appstate\big) \\
        &= \frac{\ext}{\ext s} \Phi_{G\times H}\big((e^{\rho_{G}(\appstate)s}, e^{\rho_{H}(\appstate)s}),\appstate\big) \\ 
        &= \frac{\ext}{\ext s} \Phi_{G}\big(e^{\rho_{G}(\appstate)s},\Phi_{H}(e^{\rho_{H}(\appstate)s},\appstate)\big) \\
        &= \frac{\ext}{\ext s} \Phi_{G}\big(e^{\rho_{G}(\appstate)s},\appstate\big) + \frac{\ext}{\ext s} \Phi_{H}\big(e^{\rho_{H}(\appstate)s},\appstate\big) \\
        &=  X_{\rho_G(\appstate)}(\appstate) + X_{\rho_H(\appstate)}(\appstate) \,.
    \end{align*}
\end{proof}

We call the tuple $(G\times H, \Phi_{G\times H}, \rho_{G\times H})$ the \textbf{product ROM} of $(G,\Phi_G,\rho_G)$ and $(H,\Phi_H,\rho_H)$, and we call~\eqref{eq:product_AFOM} the \textbf{product approximated dynamics}. A product reconstruction Theorem~\ref{thm:product_reconstruction} follows as an analog of Theorem~\ref{thm:MorLie reconstruction}:

\begin{theorem}[Product reconstruction]\label{thm:product_reconstruction}
    Given a product reduction $(G\times H, \Phi_{G\times H}, \rho_{G\times H})$ and the $\appX(\appstate)$ in~\eqref{eq:product_AFOM}, then the integral curves of 
    \begin{equation}
        \dot{\appstate} = \appX(\appstate) \,,\; \appstate(0) = \state_0\,,
    \end{equation}
    are given by 
    \begin{equation}\label{eq:product_reconstruction}
        \appstate = \Phi_{G\times H}\big((g,h)(t),\state_0\big)\,,
    \end{equation}
    where $g(t),h(t)$ satisfy (for $\Phi_G,\Phi_H$ left actions):
    \begin{align} 
        \dot{g} = {R_g}_* \rho_G\big(\appstate(t)\big)\,,\; g(0) = e\,, \label{eq:product_group_dynamics_g}\\
        \dot{h} = {R_h}_* \rho_H\big(\appstate(t)\big)\,,\; h(0) = e\,. \label{eq:product_group_dynamics_h}
    \end{align}
    When either $\Phi_G$ or $\Phi_H$ is a right action, ${R_g}_*$ needs to be replaced by ${L_g}_*$ in~\eqref{eq:product_group_dynamics_g} or ${R_h}_*$ needs to be replaced by ${L_h}_*$ in~\eqref{eq:product_group_dynamics_h}, respectively.
\end{theorem}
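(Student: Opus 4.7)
The plan is to verify that the proposed curve $\appstate(t) := \Phi_{G\times H}\big((g(t), h(t)), \state_0\big)$, with $g, h$ as in \eqref{eq:product_group_dynamics_g}--\eqref{eq:product_group_dynamics_h}, solves the Cauchy problem $\dot{\appstate} = \appX(\appstate)$, $\appstate(0) = \state_0$. Uniqueness of solutions to smooth ODEs then forces the identification in \eqref{eq:product_reconstruction}. By construction $\appstate(0) = \Phi_{G\times H}((e,e), \state_0) = \state_0$, so only the differential equation must be checked.

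For the differentiation, first I would use \eqref{eq:product_action} to write $\appstate(t) = \Phi_G\big(g(t), \Phi_H(h(t), \state_0)\big)$. Applying the chain rule and treating the two factors separately yields $\dot{\appstate}(t) = \tfrac{\ext}{\ext s}\big|_{s=t} \Phi_G\big(g(s), \Phi_H(h(t), \state_0)\big) + \tfrac{\ext}{\ext s}\big|_{s=t} \Phi_G\big(g(t), \Phi_H(h(s), \state_0)\big)$. For the first summand I would run exactly the argument in the proof of Theorem~\ref{thm:MorLie reconstruction}: by \eqref{eq:product_group_dynamics_g}, $g(s)$ is tangent at $s=t$ to $e^{\rho_G(\appstate(t))(s-t)} g(t)$, and the left-action homomorphism property \eqref{eq:group_homomorphism} together with \eqref{eq:inf_generator} identifies the summand as $X_{\rho_G(\appstate(t))}(\appstate(t))$. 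For the second summand, the analogous tangency for $h(s)$ rewrites it as $\tfrac{\ext}{\ext s}\big|_{s=t}\Phi_G\big(g(t), \Phi_H(e^{\rho_H(\appstate(t))(s-t)} h(t), \state_0)\big)$; at this point the commutativity assumption $\Phi_G \circ \Phi_H = \Phi_H \circ \Phi_G$ is essential to pull $\Phi_G(g(t),\cdot)$ past $\Phi_H(e^{\rho_H(\appstate(t))(s-t)},\cdot)$, after which the infinitesimal-generator identity \eqref{eq:inf_generator} delivers $X_{\rho_H(\appstate(t))}(\appstate(t))$. Summing the two contributions recovers $\appX(\appstate(t))$ as defined in \eqref{eq:product_AFOM}.

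The right-action case follows the same template, swapping ${R_g}_*$ for ${L_g}_*$ (respectively ${R_h}_*$ for ${L_h}_*$) in whichever factor carries a right action, so that the tangency identity yet again reduces the curve differentiation to the infinitesimal generator, as already discussed for a single factor in Theorem~\ref{thm:MorLie reconstruction}. Commutativity of $\Phi_G$ and $\Phi_H$ is the ingredient that lets the two contributions be evaluated independently regardless of the chirality of the individual actions.

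The main obstacle is the commutation step in the second summand: without the assumption that $\Phi_G$ and $\Phi_H$ commute, the derivative along $h$ would see $\Phi_G(g(t),\cdot)$ on the outside and would not reduce to the infinitesimal generator at $\appstate(t)$, producing instead a $g(t)$-twisted vector field related to $\Phi_G(g(t),\cdot)_* X_{\rho_H(\appstate(t))}$. Once commutativity collapses that twist, the remainder of the argument is a direct repackaging of the proof of Theorem~\ref{thm:MorLie reconstruction}, and no further estimates are required.
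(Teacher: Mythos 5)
Your proposal is correct and follows essentially the same route as the paper's proof: a chain-rule split of $\tfrac{\ext}{\ext t}\Phi_{G\times H}\big((g(t),h(t)),\state_0\big)$ into two summands, reduction of each to an infinitesimal generator via the tangency argument of Theorem~\ref{thm:MorLie reconstruction}, with commutativity of $\Phi_G$ and $\Phi_H$ invoked exactly where the paper invokes it, namely to rewrite the $h$-summand with $\Phi_H$ on the outside. Your explicit remarks on the initial condition, on uniqueness of ODE solutions, and on what would go wrong without commutativity are correct refinements of details the paper leaves implicit.
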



\begin{proof}[Theorem~\ref{thm:product_reconstruction}]
    To show that the reconstructed trajectory~\eqref{eq:product_reconstruction} is a solution of the dynamics~\eqref{eq:product_AFOM},
    we show that its tangent vector at the point $\appstate \in \M$ is given by $\bar{X}(\appstate)$. To this end, differentiate: 
    \begin{align}
        \frac{\textnormal{d}}{\textnormal{d}t} \appstate(t)
        = &\, 
        \frac{\textnormal{d}}{\textnormal{d}t} \Phi_{G\times H}\big((g(t),h(t)),\state_0\big) \\ 
        = &\, \frac{\textnormal{d}}{\textnormal{d}t} \Phi_{G\times H}\big((g(t),h),\state_0\big) \nonumber \\
        &\quad + \frac{\textnormal{d}}{\textnormal{d}t} \Phi_{G\times H}\big((g,h(t)),\state_0\big) \nonumber \\
        = &\, \frac{\textnormal{d}}{\textnormal{d}t} \Phi_G\big(g(t),\Phi_H(h,\state_0)\big) \nonumber \\
        &\quad + \frac{\textnormal{d}}{\textnormal{d}t} \Phi_H\big(h(t),\Phi_G(g,\state_0)\big) \nonumber \\
        = &\, \frac{\textnormal{d}}{\textnormal{d}s} \Phi_G\big(e^{\rho_G(\appstate)s},\appstate\big)_{|s=0} \nonumber \\
        &\quad + \frac{\textnormal{d}}{\textnormal{d}s} \Phi_H\big(e^{\rho_H(\appstate)s},\appstate)_{|s=0} \nonumber \\
        = &\,  X_{\rho_G(\appstate)}(\appstate) + X_{\rho_H(\appstate)}(\appstate)\nonumber \\
        = &\, \appX(\appstate)\,. \nonumber
    \end{align}
    Where the third equality uses that the group actions commute, and the fourth equality reuses that 
    \begin{align}
        \frac{\textnormal{d}}{\textnormal{d}t} \Phi_G\big(g(t),\state\big) = \frac{\textnormal{d}}{\textnormal{d}s}\Phi_G\big(e^{\rho_G(\appstate)s},\Phi_G(g(t),\state)\big)_{|s=0}\,.
    \end{align}
\end{proof}


Given Lie groups $G,H$ and actions $\Phi_G:G\times\M\rightarrow\M\,,\, \Phi_H:H\times\M\rightarrow\M$ that do not commute, 
Then Theorem~\ref{thm:product_reduction} does not immediately apply. We make use of the following lemma:

\begin{lemma}\label{lemma:commuting_actions_by_augmentation}
    Given any actions $\Phi_G:G\times\M\rightarrow\M$ and $\Phi_H: H\times \M \rightarrow\M$.
    Define the \textbf{$G$-augmented} manifold 
    \begin{equation}
        \M_G := G \times \M\,.
    \end{equation} 
    Then the actions $\widetilde{\Phi}_G:G\times\M_G\rightarrow\M_G$ and $\widetilde{\Phi}_H:H\times\M_G\rightarrow\M_G$ defined by
    \begin{align}
        \widetilde{\Phi}_G \big(g_2,(g_1,\state)\big) &= (g_2 g_1, g_2 \state)\,, \\
        \widetilde{\Phi}_H \big(h, (g,\state)\big) &= (g, ghg^{-1}\state)
    \end{align}
    commute.
\end{lemma}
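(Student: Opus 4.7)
The plan is to verify three things: (i) $\widetilde{\Phi}_G$ is a left $G$-action on $\M_G$, (ii) $\widetilde{\Phi}_H$ is a left $H$-action on $\M_G$, and (iii) the two actions commute in the sense required by Theorem~\ref{thm:product_reduction}. The only conceptual point is how to read the expression $ghg^{-1}\cdot\state$ that defines $\widetilde{\Phi}_H$: since $g\in G$ and $h\in H$ belong to a priori unrelated groups, it must be interpreted as the composition $\Phi_G\bigl(g,\,\Phi_H(h,\,\Phi_G(g^{-1},\state))\bigr)$. I would make this interpretation explicit at the start of the proof; all remaining checks are then routine bookkeeping, and smoothness of $\widetilde{\Phi}_G$ and $\widetilde{\Phi}_H$ is inherited directly from smoothness of $\Phi_G$, $\Phi_H$, group multiplication, and inversion.

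For $\widetilde{\Phi}_G$ the identity axiom is immediate, and the homomorphism axiom follows by associativity on the first factor together with the left-action property of $\Phi_G$ on the second factor. For $\widetilde{\Phi}_H$ the identity axiom reduces to $g e_H g^{-1}\cdot\state = g g^{-1}\cdot\state = \state$, while the homomorphism axiom follows from the telescoping cancellation $g\cdot h_1\cdot g^{-1}\cdot g\cdot h_2\cdot g^{-1}\cdot\state = g\cdot(h_1 h_2)\cdot g^{-1}\cdot\state$, which uses the left-action property of $\Phi_H$ together with $\Phi_G\bigl(g^{-1},\Phi_G(g,y)\bigr) = y$.

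For commutativity I would evaluate
\begin{equation}
    \widetilde{\Phi}_G\bigl(g_2,\widetilde{\Phi}_H(h,(g_1,\state))\bigr) \quad\text{and}\quad \widetilde{\Phi}_H\bigl(h,\widetilde{\Phi}_G(g_2,(g_1,\state))\bigr)
\end{equation}
directly. The first components of both sides are $g_2 g_1$, and the second components evaluate to $g_2\cdot g_1\cdot h\cdot g_1^{-1}\cdot\state$ and $(g_2 g_1)\cdot h\cdot(g_2 g_1)^{-1}\cdot g_2\cdot\state$ respectively; simplifying $(g_2 g_1)^{-1} g_2 = g_1^{-1}$ in the latter yields agreement with the former.

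There is no real obstacle beyond notational vigilance: the conjugation $g h g^{-1}$ built into $\widetilde{\Phi}_H$ is designed precisely so that the extra factor of $g_2$ introduced by $\widetilde{\Phi}_G$ gets absorbed, which is what restores commutativity. Conceptually the lemma should be read as a general recipe that trades non-commutativity of two actions on $\M$ for the modest cost of augmenting the target by a copy of one of the groups.
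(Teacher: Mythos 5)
Your proof is correct. The paper states Lemma~\ref{lemma:commuting_actions_by_augmentation} without any proof, so there is no argument of the authors' to compare against; your direct verification --- making explicit that $ghg^{-1}\cdot\state$ means $\Phi_G\bigl(g,\Phi_H(h,\Phi_G(g^{-1},\state))\bigr)$, checking the two action axioms for each of $\widetilde{\Phi}_G$ and $\widetilde{\Phi}_H$, and then comparing the two composites componentwise via the cancellation $(g_2g_1)^{-1}g_2=g_1^{-1}$ --- is exactly the computation the lemma is implicitly relying on, and it fills the gap. The one hypothesis worth stating explicitly in your write-up is that $\Phi_G$ and $\Phi_H$ are both \emph{left} actions (consistent with the paper's default notation $g\cdot\state=\Phi_g(\state)$), since every cancellation you perform, e.g.\ $\Phi_G\bigl(g^{-1},\Phi_G(g,y)\bigr)=y$ and the telescoping step for the homomorphism axiom of $\widetilde{\Phi}_H$, uses the left-action composition rule $\Phi_{gh}=\Phi_g\circ\Phi_h$.
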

Here, $\widetilde{\Phi}_G$ acts on the second element $\M$ exactly as $\Phi_G$ does, and $\widetilde{\Phi}_H$ acts on the second element of $(e,x) \in \M_G$ exactly as $\Phi_H$ does. This construction of commuting actions is not unique, an alternative construction uses $\M_H$ and arrives at different augmented actions. 

Given this construction, a product of $(G,\Phi_G,\rho_G)$ and $(H,\Phi_H,\rho_H)$ can be found: 
\begin{theorem}[Product reduction for non-commuting actions]\label{thm:product_reduction_nc}
    Given two reductions of $\M,\, X\in \Gamma(T\M)$ as $(G,\Phi_G,\rho_G)$ and $(H,\Phi_H,\rho_H)$ 
    \begin{align}
        \Phi_G&:G\times\M\rightarrow\M \,,\; \rho_G:\M\rightarrow\g \,, \\
        \Phi_H&:H\times\M \rightarrow\M\,,\; \rho_H:\M\rightarrow\mathfrak{h}\,.
    \end{align}
    Define actions $\widetilde{\Phi}_G:G\times\M_G\rightarrow \M_G$ and $\widetilde{\Phi}_H:H\times\M_G\rightarrow \M_G$ as in Lemma~\ref{lemma:commuting_actions_by_augmentation}, and define $\widetilde{\Phi}_{G\times H}: G\times H \times \M_G \rightarrow \M_G$ and $\rho_{G\times H}:\M_G \rightarrow \g \times \mathfrak{h}$ as
    \begin{align}
        \widetilde{\Phi}_{G\times H} \big((g_2,h),(g_1,\state)\big) :=&\, \widetilde{\Phi}_G\big(g_2,\widetilde{\Phi}_H(h,(g_1,\state))\big) \label{eq:product_action_nc} \\ 
        \widetilde{\rho}_{G\times H}((g,\state)) :=&\, (\rho_G(\state),\rho_H(\state)) \label{eq:product_reduced_vector_field_nc}\,.
    \end{align}
    Then the approximated dynamics induced by $(G\times H, \widetilde{\Phi}_{G\times H}, \widetilde{\rho}_{G\times H})$ is $\appX\in\Gamma(\Delta)$ with $\Delta\subseteq T\M_G$: 
    \begin{equation}
        \appX\big((g,\state)\big) = \pmat{ 
            {R_g}_*\rho_G(\state) \\ X_{\rho_G(\appstate)}(\appstate) + g\cdot X_{\rho_H(\appstate)}(g^{-1}\cdot\appstate) 
        } \,. 
    \end{equation}
\end{theorem}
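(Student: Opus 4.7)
The plan is to reduce the non-commuting case to the commuting setting via Lemma~\ref{lemma:commuting_actions_by_augmentation}. That lemma guarantees that the augmented actions $\widetilde{\Phi}_G$ and $\widetilde{\Phi}_H$ on $\M_G$ commute, so Theorem~\ref{thm:product_reduction} applies directly to the pair of reductions $(G, \widetilde{\Phi}_G, \widetilde{\rho}_G)$ and $(H, \widetilde{\Phi}_H, \widetilde{\rho}_H)$, where $\widetilde{\rho}_G((g,\state)) := \rho_G(\state)$ and $\widetilde{\rho}_H((g,\state)) := \rho_H(\state)$ are the two components of $\widetilde{\rho}_{G\times H}$ in~\eqref{eq:product_reduced_vector_field_nc}. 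This step immediately yields that $\widetilde{\Phi}_{G\times H}$ is a group action and that the approximated dynamics decomposes as a sum of infinitesimal generators
\begin{equation*}
\appX\big((g,\state)\big) \;=\; X^{\widetilde{\Phi}_G}_{\rho_G(\state)}\big((g,\state)\big) \;+\; X^{\widetilde{\Phi}_H}_{\rho_H(\state)}\big((g,\state)\big)\,.
\end{equation*}

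The second step is to evaluate each of the two infinitesimal generators from Definition~\ref{def:infinitesimal_generator} by differentiating the explicit formulas for $\widetilde{\Phi}_G$ and $\widetilde{\Phi}_H$ in Lemma~\ref{lemma:commuting_actions_by_augmentation}. For $\widetilde{\Phi}_G$, differentiating $\widetilde{\Phi}_G(e^{\rho_G(\state)s},(g,\state)) = (e^{\rho_G(\state)s}\,g,\; e^{\rho_G(\state)s}\cdot \state)$ at $s = 0$ gives ${R_g}_*\rho_G(\state) \in T_g G$ in the first component and the original infinitesimal generator $X_{\rho_G(\state)}(\state) \in T_\state\M$ in the second. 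For $\widetilde{\Phi}_H$, the expression $g h g^{-1}\cdot\state$ must first be parsed as the composition $\Phi_G(g,\Phi_H(h,\Phi_G(g^{-1},\state)))$, since $G$ and $H$ act on $\M$ independently. Differentiating $\widetilde{\Phi}_H(e^{\rho_H(\state)s},(g,\state))$ at $s=0$ then leaves the first entry constant in $s$ (contributing zero) and, by the chain rule through the diffeomorphism $\Phi_{G,g}$, yields $(\Phi_{G,g})_*\,X_{\rho_H(\state)}(g^{-1}\cdot\state)$ in the second entry; in the paper's shorthand this is written $g\cdot X_{\rho_H(\state)}(g^{-1}\cdot\state)$.

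Summing the two contributions reproduces the claimed expression for $\appX$. The assertion $\appX \in \Gamma(\Delta)$, with $\Delta \subseteq T\M_G$ the distribution induced by $\widetilde{\Phi}_{G\times H}$, is automatic from Theorem~\ref{thm:product_reduction}, since each summand is by construction an infinitesimal generator of $\widetilde{\Phi}_{G\times H}$ and so lies in $\Delta$ pointwise.

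The main obstacle I expect is the bookkeeping for $\widetilde{\Phi}_H$: the notation $g h g^{-1}\cdot\state$ is mildly abusive because $g\in G$ and $h\in H$ live in different groups, so one has to commit to an unambiguous parsing as a composition of $\Phi_G$ and $\Phi_H$ before differentiating. Once this is done, the chain rule through the outer $\Phi_{G,g}$ must be applied carefully enough to recognise the derivative as $(\Phi_{G,g})_*$ applied to a \emph{standard} infinitesimal generator of $\Phi_H$ evaluated at the shifted base point $g^{-1}\cdot\state$. Everything else is a direct consequence of Lemma~\ref{lemma:commuting_actions_by_augmentation} and Theorem~\ref{thm:product_reduction} and requires no further computation.
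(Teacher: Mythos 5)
Your proposal is correct and follows essentially the same route as the paper: the paper's own proof likewise reduces to computing the infinitesimal generator of $\widetilde{\Phi}_{G\times H}$, splits it into the two augmented generators (via the chain rule rather than by explicitly citing Theorem~\ref{thm:product_reduction} on the now-commuting augmented actions, which is your only organizational difference), and then evaluates each term exactly as you do, arriving at $\bigl({R_g}_*\rho_G(\state),\, X_{\rho_G(\state)}(\state)\bigr) + \bigl(0,\, g\cdot X_{\rho_H(\state)}(g^{-1}\cdot\state)\bigr)$. Your careful parsing of $ghg^{-1}\cdot\state$ as $\Phi_G(g,\Phi_H(h,\Phi_G(g^{-1},\state)))$ and the resulting push-forward $(\Phi_{G,g})_*$ matches the paper's final step.
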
 
\begin{proof}
    By Definition~\ref{def:AFOM}, the approximated dynamics induced by $(G\times H, \widetilde{\Phi}_{G\times H}, \widetilde{\rho}_{G\times H})$ are given by the infinitesimal generator $X_{G\times H, \widetilde{\rho}_{G\times H}((g,\appstate))}((g,\appstate))$:
    \begin{align*}
        \appX\big((g,\appstate)\big) &= X_{G\times H, \widetilde{\rho}_{G\times H}((g,\appstate))}\big((g,\appstate)\big) \\ 
        &= \frac{\ext}{\ext s} \widetilde{\Phi}_{G\times H}\big(e^{\widetilde{\rho}_{G\times H}((g,\appstate))s},(g,\appstate)\big) \\
        &= \frac{\ext}{\ext s} \widetilde{\Phi}_{G\times H}\big((e^{\rho_{G}(\appstate)s}, e^{\rho_{H}(\appstate)s}),(g,\appstate)\big) \\ 
        &= \frac{\ext}{\ext s} \widetilde{\Phi}_{G}\big(e^{\rho_{G}(\appstate)s},\widetilde{\Phi}_{H}(e^{\rho_{H}(\appstate)s},(g,\appstate))\big) \\
        &= \frac{\ext}{\ext s} \widetilde{\Phi}_{G}\big(e^{\rho_{G}(\appstate)s},(g,\appstate)\big) \\ 
        & \quad 
        + \frac{\ext}{\ext s} \widetilde{\Phi}_{H}\big(e^{\rho_{H}(\appstate)s},(g,\appstate)\big) \\
        &= \frac{\ext}{\ext s} (e^{\rho_{G}(\appstate)s} g, \Phi_{G}\big(e^{\rho_{G}(\appstate)s},\appstate)\big) \\ 
        & \quad + \frac{\ext}{\ext s} (g, g\cdot \Phi_{H}\big(e^{\rho_{H}(\appstate)s},g^{-1} \cdot \appstate)\big) \\
        &= 
        \pmat{ 
            {R_g}_*\rho_G(\appstate) \\ 
            X_{\rho_G(\appstate)}(\appstate)
        }
        +
        \pmat{ 0 \\
            g\cdot X_{\rho_H(\appstate)}(g^{-1}\cdot\appstate)
         } \,.
    \end{align*}
\end{proof}
We call $(G\times H, \widetilde{\Phi}_{G\times H}, \widetilde{\rho}_{G\times H})$ the \textbf{$\M_G$-related product} of $(G,\Phi_G,\rho_G)$ and $(H,\Phi_H,\rho_H)$. 
Also the $\M_H$-related product may be found. The differences between the two are explored in Section~\ref{sec:analytic_examples}. 
The product reconstruction Theorem~\ref{thm:product_reconstruction} can be reused to compute integral curves on $\M_G$.

\begin{remark}
    The concept of a product reduction similarly induces larger group-action pairs (cf.\ Def.~\ref{def:induced_admissible_pairs}). Given group and action pairs $(G,\Phi_G)$, $(H,\Phi_H)$ whose actions commute, then $\Xi^{(G\times H, \Phi_{G\times H})} = \{ (\widetilde{G}\times\widetilde{H}, \Phi_{G\times H})\;|\; \widetilde{G} \subseteq G, \widetilde{H} \subseteq H \}$ has size $|\Xi^{(G\times H, \Phi_{G\times H})}| = |\Xi^{(G, \Phi_{G})}| |\Xi^{(H, \Phi_{H})}|$. Similar results hold when the actions do not commute.
\end{remark}



\section{Code}\label{ssec:appendix-code}
\subsection{Subalgebra search}\label{ssec:code-subalgebra}
We present an algorithm that, given a finite collection $S_\g$ of elements in a Lie algebra $\g$ and an inner product on $\g$, finds a subalgebra $\mathfrak{h} \subseteq \g$ that contains the first $k$ singular vectors of $S_\g$.

\begin{algorithm}[H]
    \caption{Subalgebra search}\label{app:alg_subalgebra_search}
    \hspace*{\algorithmicindent} \textbf{Input}: $S_\g$ \\
    \hspace*{\algorithmicindent} \textbf{Output}: $\mathfrak{h}\subseteq \g$ 
    \begin{algorithmic}[1]
        \State $S_\g \gets \textnormal{Collection of algebra elements}$
        \State $d_{\textnormal{SVD}} \gets \textnormal{Threshold for singular vectors}$
        \State $k, \{\widetilde{A}_1,\cdots, \widetilde{A}_k\} \gets \textnormal{SVD}(S_\g, d_k)$
        \State $\h \gets \{\widetilde{A}_1,\cdots, \widetilde{A}_k\}$
        \Repeat
            \State $\h \gets \textnormal{Basis}(\textnormal{Bracket}(\h,\h))$
        \Until $\textnormal{span}(\h) == \textnormal{span}(\textnormal{Bracket}(\h,\h))$
    \end{algorithmic}
\end{algorithm}
Here, $\textnormal{Bracket}(\h,\h)$ returns $[\h,\h]\oplus\h$, i.e., it uses the Lie Bracket on $\g$ to bracket all elements in the basis of $\h$ with each other and expands the set $\h$ by the result. The algorithm is guaranteed to terminate with $\h = \g$, for finite $\g$, but may also terminate at $\h \subset \g$.
Accompanying code also shows how the basis of the resulting $\h$, together with the Lie-bracket on $\g$ can be used to construct explicitly maps such as $\exp:\h \rightarrow H$, $\log: U\subseteq H \rightarrow\h$ and $\ext\exp:T \h \rightarrow \h$ for numerical integration in local charts on $H$, for $G = GL(n)$. This can be more efficient than restricting the domain and co-domain of $\exp:\mathfrak{gl}(n)\rightarrow GL(n)$, $\log: U\subseteq GL(n)\rightarrow \mathfrak{gl}(n)$ and $\ext\exp:T \mathfrak{gl}(n)\rightarrow \mathfrak{gl}(n)$.

\subsection{Group and action search}\label{ssec:code-group-action}
We present an algorithm that, given a collection $S$ of points $P_{j,k}\in\R^{3N_i}$ (cf. Sec.~\ref{ssec:sheering-pointclouds}), splits them into distinct clusters and thus identifies the number of copies of the affine group $Aff(3)$ whose action can describe the motion of the points.

\begin{algorithm}[H]
    \caption{Clustering}\label{app:cluster_search}
    \hspace*{\algorithmicindent} \textbf{Input}: $S$ \\
    \hspace*{\algorithmicindent} \textbf{Output}: $n_G$, $S_1,\cdots, S_{n_G}$ 
    \begin{algorithmic}[1]
        \State $S \gets \textnormal{points } P_{j,k}\in \R^{3 N_i}$
        \State $N_n \gets \textnormal{Number of nearest neighbors}$
        \State $k \gets 0$
        \Repeat
            \State $S_k \gets \textnormal{RandomCluster}(S,N_n)$
            \State $X_k \gets \textnormal{FitInfinitesimalGenerator}(S_k)$
            \State $S_k \gets \textnormal{FilterByGenerator}(S,X_k)$
            \State $S \gets S\backslash S_k$
            \State $k \gets k + 1$
        \Until $|S| = 0$
        \State $n_G \gets k$
    \end{algorithmic}
\end{algorithm}
Here, $\textnormal{RandomCluster}(S,N_n)$ determines a local cluster of $N_n$ nearest neighbors, $\textnormal{FitInfinitesimalGenerator}(S_k)$ fits an infinitesimal generator to the velocities in $S_k$, and $\textnormal{FilterByGenerator}(S,X_k)$ returns the trajectories whose velocities are described by $X_k$. The output of the algorithm is that the set $S$ is split into distinct groups $S_1,\cdots, S_{n_G}$. 

\section{Singular values of rigid snapshots}\label{ssec:appendix-rigid-svd}

We prove Theorem~\ref{thm:singular-values-rigid-snapshots}. For convenience, we state the Theorem again:

\begin{theorem}
    Assume the following:
    \begin{enumerate}
        \item The number of columns $N_jN_k$ and the number of rows $3N_i+1$ of $S$ are sufficiently large: $N_jN_k \geq 9(N_j+1)+3$ and $3N_i+1 \geq 9(N_j+1)+3$
        \item Each initial point cloud $P_{j,0} \in \R^{N_j}$ spans $\R^3$: $$\dim\textnormal{span}\{p^i_{j,0}\;|\;i \in \{0,\cdots,N_i\}\} = 3$$
        \item The matrices $R(t_k)$ span $\R^{3\times 3}$: $$\dim\textnormal{span}\{R(t_k)\;|\;k \in \{0,\cdots,N_k\}\} = 9$$
        \item The translations $b(t_k)$ span $\R^3$: $$\dim\textnormal{span}\{b(t_k)\;|\;k \in \{0,\cdots,N_k\}\} = 3$$
        \item The subspaces $$W_j := \textnormal{span}\{P_{j,k}\;|\; k \in \{0,\cdots,N_k\} \}$$ of $\R^{3N_i}$ associated with different point cloud trajectories $P_{j,k}$ are independent. 
    \end{enumerate}  
    Then the number of non-zero singular values of the snapshot set $S$ in~\eqref{eq:snapshots-rigid-pointcloud} is $9(N_j+1)+3$.
\end{theorem}

\begin{proof}
    The number of non-zero singular values of a matrix is bounded by the number of columns $N_jN_k$, the number of rows $3N_i+1$, and the column rank of $S$. Given Assumption 1. that $N_jN_k$ and $3N_i+1$ are sufficiently large, we show that the column rank of $S$ is $9(N_j+1)+3$.
    
    Each column of $S$ corresponding to trajectory $j$ at time $t_k$ has the form
    \begin{equation}
    P_{j,k}     =
    \begin{pmatrix}
    R(t_k)p^1_{j,0} \\ 
    \vdots \\
    R(t_k)p^{N_i}_{j,0}
    \end{pmatrix}
    +
    \begin{pmatrix}
    b(t_k) \\
    \vdots \\
    b(t_k)
    \end{pmatrix}.
    \end{equation}
    We analyze rotational and translational contributions separately, first focusing on the rotational term.

    Write as $e_n \in \R^3$ the $n$-th unit vector, and define vectors $\bar P^{(j)}_{nm} \in \mathbb{R}^{3N_i}$ by
    \begin{equation}
        \bar P^{(j)}_{nm}
        =
        \begin{pmatrix}
        (e_n^\top p^1_{j,0}) e_m \\
        \vdots \\
        (e_n^\top p^{N_i}_{j,0}) e_m
        \end{pmatrix}\,.
    \end{equation}
    Then
    \begin{align}
        P^{\mathrm{rot}}_{j,k} = 
            \pmat{
            R(t_k)p^1_{j,0} \\ 
            \vdots \\
            R(t_k)p^{N_i}_{j,0} }
        &= \sum_{n,m} R_{nm}(t_k)
            \pmat{ 
            (e_n^\top p^1_{j,0}) e_m \\
            \vdots \\
            (e_n^\top p^{N_i}_{j,0}) e_m } \\ 
        &= \sum_{n,m} R_{nm}(t_k) \bar{P}^{(j)}_{nm} \,.
    \end{align}
    Hence the rotational part lies in the span of at most $9$ vectors. We now show that these are linearly independent. Suppose that
    \begin{equation}
    \sum_{n,m=1}^{3} a_{nm}\bar P^{(j)}_{nm}=0.
    \end{equation}
    Then for each particle $p^i_{j,0}$
    \begin{equation}
    \sum_{n,m} a_{nm}(e_n^\top p^i_{j,0}) e_m = 0\,.
    \end{equation}
    Define
    \begin{equation}
    A := \sum_{n,m} a_{nm} e_m e_n^\top \in \mathbb{R}^{3\times3}.
    \end{equation}
    The previous equation is equivalent to
    \begin{equation}
    A p^i_{j,0} = 0 \quad \text{for all } i.
    \end{equation}
    By Assumption 2., the initial cloud spans $\mathbb{R}^3$, so this would imply $A=0$, and therefore $a_{nm}=0$ for all $n,m$. Thus, the nine vectors $\bar P^{(j)}_{nm}$ are linearly independent.

    By Assumption 3., the rotational component for trajectory $j$ spans a $9$-dimensional subspace of $\mathbb{R}^{3N_i}$.

    By Assumption 5., it follows that $9$ linearly independent components are required for the rotational component of each of the $N_j + 1$ trajectories, and we arrive at $9(N_j+1)$ linearly independent components to express the rotational component of $S$. 
    
    By Assumption 4., the translations can be expressed by three linearly independent vectors $\bar{P}_{b,1},\cdots, \bar{P}_{b,3}$ for all trajectories, bringing the maximum number of orthogonal components to $9(N_j+1)+3$. 
\end{proof}
 
\section{Model order reduction on manifolds}\label{sec:manimor}

\subsection{ManiMOR}
We briefly describe the recent differential geometric framework for MOR on manifolds (ManiMOR)~\cite{Buchfink2023a}.

Here, the full order model is of the form~\eqref{eq:FOM}, along with a set of solution snapshots of the form~\eqref{eq:solution-snapshots}. The ROM is then described as a vector field on a reduced order manifold $\mathcal{N}$ which is related to the full order manifold $\mathcal{M}$ by an embedding $\varphi:\mathcal{N}\rightarrow\mathcal{M}$. 

The vector field $\appX\in\X(\mathcal{N})$ is given by a projection $\Pi \in C^\infty(T\M,T\mathcal{N})$ of the full order vector field, and is defined as
\begin{equation}
    \appX(\stateb) = \Pi X(\varphi(\stateb))\,,
\end{equation}
where the projection operator $\Pi$ satisfies the projection property:
\begin{equation}
    \Pi \circ \varphi_* = \textnormal{id}_{T\mathcal{N}}  \,.
\end{equation}
With respect to the main-text we note that the projection operator $\Pi_\Delta$ in Theorem~\ref{thm:explicit-rho-intrusive} corresponds to $\Pi$, picking for $\mathcal{N}$ any orbit $\mathcal{N} = \mathcal{O}(\state)$ and for $\varphi:\mathcal{O}(\state) \rightarrow \M$ the canonical embedding. 
For more information on ManiMOR we refer to~\cite{Buchfink2023a}.

\subsection{From submanifolds to distributions to Lie groups}
In ManiMOR, the ROM is described on a single submanifold $\varphi(\mathcal{N}) \subseteq \M$. Instead, we want to investigate if MOR can restrict dynamics in $\Gamma(T\M)$ to $\Gamma(\Delta)$ with $\Delta \subseteq T\M$ a distribution. If $\Delta$ is regular and integrable, there will be a family of $k$-dimensional manifolds $\mathcal{N}_\state \subseteq \M$ with constant $k = \dim \Delta(\state)$ that contains all solutions for dynamics in $\Gamma(\Delta)$.  

It is apriori unclear how to coordinatize a family of submanifolds $\mathcal{N}_\state$, unless they are all equivalent and thus admit equivalent coordinates. Thus, we are looking for distributions $\Delta$ that induce a foliation into equivalent submanifolds $\mathcal{N}_\state$, i.e., we are looking for a subset of regular, integrable distributions. 

Finite-dimensional Lie algebras on a manifold $\M$ are a special case of such regular, integrable distributions. They are induced by actions of a group $G$ on the manifold $\M$ that are both free and proper, in which case the infinitesimal generators of the Lie algebra elements in $\g$ form a Lie algebra of vector fields on $\M$ that spans a regular, integrable distribution. For free actions, the submanifolds $\mathcal{N}_\state = \mathcal{O}(\state)$ are all equivalent to $G$, and thus admit equivalent coordinatizations induced by $G$. This serves as a high-level motivation to study the action of Lie groups for MOR, pointing out that the decision to describe ROMs on Lie groups is still a special case from descriptions on a more general foliation into different $\mathcal{N}_\state$. 





\end{multicols}

\end{document}